
\documentclass[a4paper, 10pt]{elsarticle}%
\usepackage[a4paper, headheight = 2.0cm, tmargin=2.0cm, bmargin = 2.0cm, hmargin={1.5cm,1.5cm} ]{geometry}
\usepackage{amsmath}
\usepackage{amsfonts}
\usepackage{amssymb, amsthm}
\usepackage{graphicx}
\usepackage[unicode]{hyperref}
\usepackage{color}%
\setcounter{MaxMatrixCols}{30}
\providecommand{\U}[1]{\protect\rule{.1in}{.1in}}
\newtheorem{theorem}{Theorem}[section]

\newtheorem{corollary}{Corollary}[section]

\newtheorem{definition}{Definition}[section]
\newtheorem{example}{Example}[section]

\newtheorem{lemma}{Lemma}[section]

\newtheorem{question}{Question}[section]
\newtheorem{proposition}{Proposition}[section]
\newtheorem{remark}{Remark}[section]

\hypersetup{
colorlinks = true,citecolor = blue,
filecolor=black,linkcolor = red,anchorcolor = red,
pagecolor = red,
urlcolor= red
}
\allowdisplaybreaks

\newcommand{\Red}[1]{{\color[rgb]{1.0, 0.0, 0.0}{#1}}}
\newcommand{\Green}[1]{{\color[rgb]{0.0, 0.4, 0.0}{#1}}}
\newcommand{\Blue}[1]{{\color[rgb]{0.0, 0.0, 0.7}{#1}}}
\newcommand{\Orange}[1]{{\color[rgb]{1.0, 0.6, 0.0}{#1}}}

\makeatletter
\def\ps@pprintTitle{%
 \let\@oddhead\@empty
 \let\@evenhead\@empty
 \def\@oddfoot{}%
 \let\@evenfoot\@oddfoot}
\makeatother

\begin{document}

\setcounter{tocdepth}{3}

\begin{frontmatter}

\title{\textbf{A constructive approach to triangular trigonometric patches}}

\author[ra]{\'Agoston R\'oth\corref{cor1}}
\ead{agoston\_roth@yahoo.com}

\author[ji]{Imre Juh\'asz}
\author[ka]{Alexandru Krist\'aly}
\address[ra]{Department of Mathematics and Computer Science, Babe\c{s}--Bolyai University, RO--400084 Cluj-Napoca, Romania}
\address[ji]{Department of Descriptive Geometry, University of Miskolc, H-3515 Miskolc-Egyetemv\'aros, Hungary}
\address[ka]{Department of Economics, Babe\c{s}--Bolyai University, RO--400591 Cluj-Napoca, Romania}
\cortext[cor1]{Corresponding author.}

%

\begin{abstract}
We construct a constrained trivariate extension of the univariate
normalized B-basis of the vector space of trigonometric polynomials of
arbitrary (finite) order $n\in%
\mathbb{N}
$ defined on any compact interval $\left[  0,\alpha\right]  $, where
$\alpha\in\left(  0,\pi\right)  $. Our triangular extension is a normalized
linearly independent constrained trivariate trigonometric function system of
dimension $\delta_{n}=3n\left(  n+1\right)  +1$ that spans the same vector
space of functions as the constrained trivariate extension of the canonical
basis of truncated Fourier series of order $n$ over $\left[  0,\alpha\right]
$. Although the explicit general basis transformation is yet unknown, the
coincidence of these vector spaces is proved by means of an appropriate
equivalence relation. As a possible application of our triangular extension, we introduce the notion of (rational) triangular trigonometric patches of order $n$ and of singularity free parametrization that could be used as control point based modeling tools in CAGD.
\end{abstract}

\begin{keyword}
trigonometric polynomials, truncated Fourier series, constrained trivariate basis functions, triangular extension, triangular (rational) patches
\end{keyword}


%
%
%
%

\end{frontmatter}

%

\makeatletter\def\overfrown#1{\mathop{\vbox{\m@th\ialign{##\crcr\noalign
{\kern3\p@}
\downfrownfill\crcr\noalign{\kern3\p@\nointerlineskip}
$\hfil\displaystyle{#1}\hfil$\crcr}}}\limits}
\def\downfrownfill{$\m@th\setbox\z@\hbox{$\braceld$}  \braceld\leaders\vrule
height\ht\z@ depth\z@\hfill\bracerd$}
\makeatother

\section{Introduction}

Triangular (rational) polynomial (spline) surfaces like the Coons \cite{Barnhill1973}, the Bernstein-B\'ezier \cite{deCasteljau1963, Barnhill1985, Farin1986} and the B-spline \cite{Dahmen1992} triangular patches (and their rational variants) form an important aspect of CAGD.
However, in order to provide control point based exact description of triangular parts of certain well-known surfaces, the rational form of these triangular patches should be used, which implies an undesired complexity (consider for example the evaluation of higher order (partial) derivatives, or the usage of non-negative special weights associated with control points that may be unknown for the designer) and in most cases provides a parametrization that does not reflect the variation of certain inner geometric properties (like curvature distribution) along the surface. One possibility to overcome some of these shortcomings is to consider the normalized B-basis of other (non-polynomial) vector spaces of functions that include the desired surfaces without the need of rational forms.

Non-polynomial surfaces like the first and second order
triangular trigonometric patches and fourth order algebraic trigonometric ones
were initiated by W.-Q. Shen, G.-Z. Wang and Y.-W. Wei in recent papers
\cite{Wang2010a,Wang2010b} and \cite{Wei2011}, respectively. These special
cases of triangular patches were obtained by certain constrained trivariate extensions of
univariate normalized B-bases of the first and second order trigonometric and of the
fourth order algebraic trigonometric vector spaces%
\begin{align*}
\mathcal{F}_{2}^{\alpha} &  =%
\operatorname{span}%
\left\{  1,\cos\left(  t\right)  ,\sin\left(  t\right)  :t\in\left[
0,\alpha\right]  \right\}  ,\\
\mathcal{F}_{4}^{\alpha} &  =%
\operatorname{span}%
\left\{  1,\cos\left(  t\right)  ,\sin\left(  t\right)  ,\cos\left(
2t\right)  ,\sin\left(  2t\right)  :t\in\left[  0,\alpha\right]  \right\}
\end{align*}
and%
\[
\mathcal{M}_{4}^{\alpha}=%
\operatorname{span}%
\left\{  1,t,\cos\left(  t\right)  ,\sin\left(  t\right)  :t\in\left[
0,\alpha\right]  \right\}  ,
\]
respectively, where $\alpha\in\left(  0,\pi\right)  $ is an arbitrarily fixed shape (or
design) parameter. The authors of the cited papers referred to their
results as triangular Bernstein or B\'{e}zier-like (algebraic) trigonometric
extensions and patches.

We restrict our
attention only to the normalized B-basis of the vector space%
\begin{equation}
\mathcal{F}_{2n}^{\alpha}=%
\operatorname{span}%
\left\{  \cos\left(  it\right)  ,\sin\left(  it\right)  :t\in\left[
0,\alpha\right]  \right\}  _{i=0}^{n} \label{truncated_Fourier_vector_space}%
\end{equation}
of truncated Fourier series (i.e., trigonometric polynomials of order at most
$n\in%
\mathbb{N}
$).

It was shown in \cite{Pena1997} that vector space
(\ref{truncated_Fourier_vector_space}) has no normalized totally positive
bases when $\alpha\geq\pi$, i.e., in this case it does not provide shape
preserving representations using control polygons. Thus, it is crucial for the
shape parameter $\alpha$ to determine an interval of length strictly less than
$\pi$.

The normalized B-basis of the vector space (\ref{truncated_Fourier_vector_space}) was introduced by J. S\'{a}nchez-Reyes in \cite{SReyes1998}. A linear reparametrization of his function system can be written in the form%
\begin{equation}
\left\{  A_{2n,i}^{\alpha}\left(  t\right)  :t\in\left[  0,\alpha\right]
\right\}  _{i=0}^{2n}=\left\{  c_{2n,i}^{\alpha}\sin^{2n-i}
\frac{\alpha-t}{2}  \sin^{i}\frac{t}{2}:t\in\left[  0,\alpha\right]
\right\}  _{i=0}^{2n}, \label{univariate_basis}%
\end{equation}
where the normalizing non-negative coefficients%
\[
c_{2n,i}^{\alpha}=\frac{1}{\sin^{2n}\frac{\alpha}{2}}\sum_{r=0}^{\left\lfloor
\frac{i}{2}\right\rfloor }\binom{n}{i-r}\binom{i-r}{r}\left(  2\cos
\frac{\alpha}{2}\right)  ^{i-2r},~i=0,1,\ldots,2n
\]
fulfill the symmetry%
\begin{equation}
c_{2n,i}^{\alpha}=c_{2n,2n-i}^{\alpha},~i=0,1,\ldots,n.
\label{symmetry_of_Sanchez_Reyes_constants}%
\end{equation}

Our main objective is to construct the constrained trivariate counterpart of
basis functions (\ref{univariate_basis}) over the triangular domain%
\[
\Omega^{\alpha}=\left\{  \left(  u,v,w\right)  \in\left[  0,\alpha\right]\times\left[  0,\alpha\right]\times\left[  0,\alpha\right]
:u+v+w=\alpha\right\}  ,
\]
i.e., our intention is to propose a non-negative normalized basis for the
constrained trivariate extension%
\begin{equation}
\mathcal{V}_{n}^{\alpha}=%
\operatorname{span}%
V_{n}^{\alpha}%
\label{constrained_trivariate_extension_of_truncated_Fourier_series}%
\end{equation}
of the vector space (\ref{truncated_Fourier_vector_space}), where the function
system $V_{n}^{\alpha}$ consists of the largest linearly independent subset of
the function system%
\[
\left\{  \cos\left(  ru+gv+bw\right)  ,\sin\left(  ru+gv+bw\right)  :\left(
u,v,w\right)  \in\Omega^{\alpha}\right\}  _{r=0,g=0,b=0}^{n,n,n}.
\]

\begin{remark}
Note, that we only provide a special constrained trivariate extension of the univariate B-basis (\ref{univariate_basis}) that can be used to describe triangular (rational) trigonometric patches with boundary curves defined by linear combinations of control points and functions (\ref{univariate_basis}). We do not suggest that the proposed extension is also a B-basis. To the best of our knowledge, the notion of multivariate normalized B-basis (or something similar) does not even exist in the literature. The present paper describes some aspects of the proposed extension, but its global nature (e.g., whether its total behavior is similar to that of the multivariate Bernstein polynomials, or whether the multivariate Bernstein polynomials form the normalized B-basis of the vector space of multivariate polynomials of finite degree) needs further studies. 
\end{remark}

In CAGD, the expression constrained trivariate function system
refers in fact to a bivariate one. Due to the constraint $u+v+w=\alpha$ each
variable can be written as the linear combination of the remaining two
independent ones. However, we do not fix which is pair assumed to be
independent, since in most cases we will work with different parametrizations
of the triangular domain $\Omega^{\alpha}$.

As we already mentioned, the literature details only the special cases $n=1$
and $n=2$ in recent articles \cite{Wang2010a} and \cite{Wang2010b},
respectively. In order to develop the general framework of the constrained trivariate
extension of the univariate basis functions (\ref{univariate_basis}), we split
our paper into eight sections that are outlined below.

Section \ref{sec:RGB} defines construction rules of a multiplicatively
weighted oriented\ graph of $n\geq1$ levels (numbered from $0$ to $n-1$) of
nodes that store three groups of non-negative constrained trivariate
trigonometric function systems (denoted by $R_{2n}^{\alpha}$, $G_{2n}^{\alpha
}$ and $B_{2n}^{\alpha}$) of order $n$ over $\Omega^{\alpha}$ that fulfill
six cyclic symmetry properties in their variables. The union $T_{2n}^{\alpha}$
of these function systems will form the basis of the constrained trivariate
extension of univariate basis functions (\ref{univariate_basis}).

The linear independence of $T_{2n}^{\alpha}$ will be proved in Section
\ref{sec:linear_independence} by exploiting the symmetry properties of the
oriented graph. More precisely, using three periodically rotating
parametrizations of $\Omega^{\alpha}$, we apply a technique based on a special
form of mathematical induction on the order of partial derivatives\ of a
vanishing linear combination of constrained trivariate functions $R_{2n}^{\alpha}$,
$G_{2n}^{\alpha}$ and $B_{2n}^{\alpha}$.

Section \ref{sec:coincidence} introduces an equivalence relation by means of
which one can recursively construct the linearly independent function system
$V_{n}^{\alpha}$ that spans the constrained trivariate extension
$\mathcal{V}_{n}^{\alpha}$ of the univariate vector space $\mathcal{F}%
_{2n}^{\alpha}$. As expected, vector spaces $\mathcal{T}_{2n}^{\alpha}=%
\operatorname{span}%
T_{2n}^{\alpha}$ and $\mathcal{V}_{n}^{\alpha}=%
\operatorname{span}%
V_{n}^{\alpha}$ coincide. Indeed, using equivalence classes we also prove
the latter statement along with the determination of the common dimension
$\delta_{n}=3n\left(  n+1\right)  +1$ of these vector spaces.

Section \ref{sec:partition_of_unity} offers a procedure to obtain the
normalized form $\overline{T}_{2n}^{\alpha}$ of the function system
$T_{2n}^{\alpha}$. Due to the complexity of this problem, closed formulas of
corresponding non-negative, symmetric and unique normalizing coefficients are
given only in case of levels $0$ and $1$ for arbitrary order, and for all
levels just for orders $n=1$, $2$ and $3$.

Section \ref{sec:applications} lists possible applications of our constrained triangular extension by providing control point based surface modeling tools that may be used in CAGD. Subsection
\ref{sec:triangular_trigonometric_patches} introduces the notion of triangular
trigonometric patches of order $n$ and presents some of their geometric
properties. Using non-negative weights of rank $1$ and quotient basis functions, the
rational counterpart of $\overline{T}_{2n}^{\alpha}$ is formulated in Subsection
\ref{sec:triangular_rational_trigonometric_patches} that defines triangular
rational trigonometric patches.

Compared to the classical constrained trivariate Bernstein polynomials on
triangular domains, in this non-polyno\-mi\-al case, theoretical questions are
significantly harder to answer even for special values of the order $n$.
Since we cannot answer some theoretical questions in their full generality for
the present, Section \ref{sec:open_problems} formulates several open problems
like the general basis transformation between vector spaces $\mathcal{T}%
_{2n}^{\alpha}$ and $\mathcal{V}_{n}^{\alpha}$, the non-negativity, symmetry
properties and closed/recursive formulas of normalizing coefficients of arbitrary
order, general order elevation, and convergence of (rational) triangular trigonometric patches to (rational) B\'ezier triangles when $\alpha \to 0$. 

\begin{remark}[Technical report]
In order to reduce the length of the paper, technical details of some proofs and reformulations are left out, that can be found in the technical report \cite{tech_report}.
\end{remark}

\section{\label{sec:RGB}Constrained trivariate function systems $R_{2n}%
^{\alpha}$, $G_{2n}^{\alpha}$ and $B_{2n}^{\alpha}$: a graph-based approach}

Consider the oriented graph of order $n\geq1$ illustrated in Fig.\ \ref{fig:oriented_graph}(\emph{a}).%
\begin{figure}
[!htb]
\begin{center}
\includegraphics[
height=11.3126cm,
width=15.7059cm
]%
{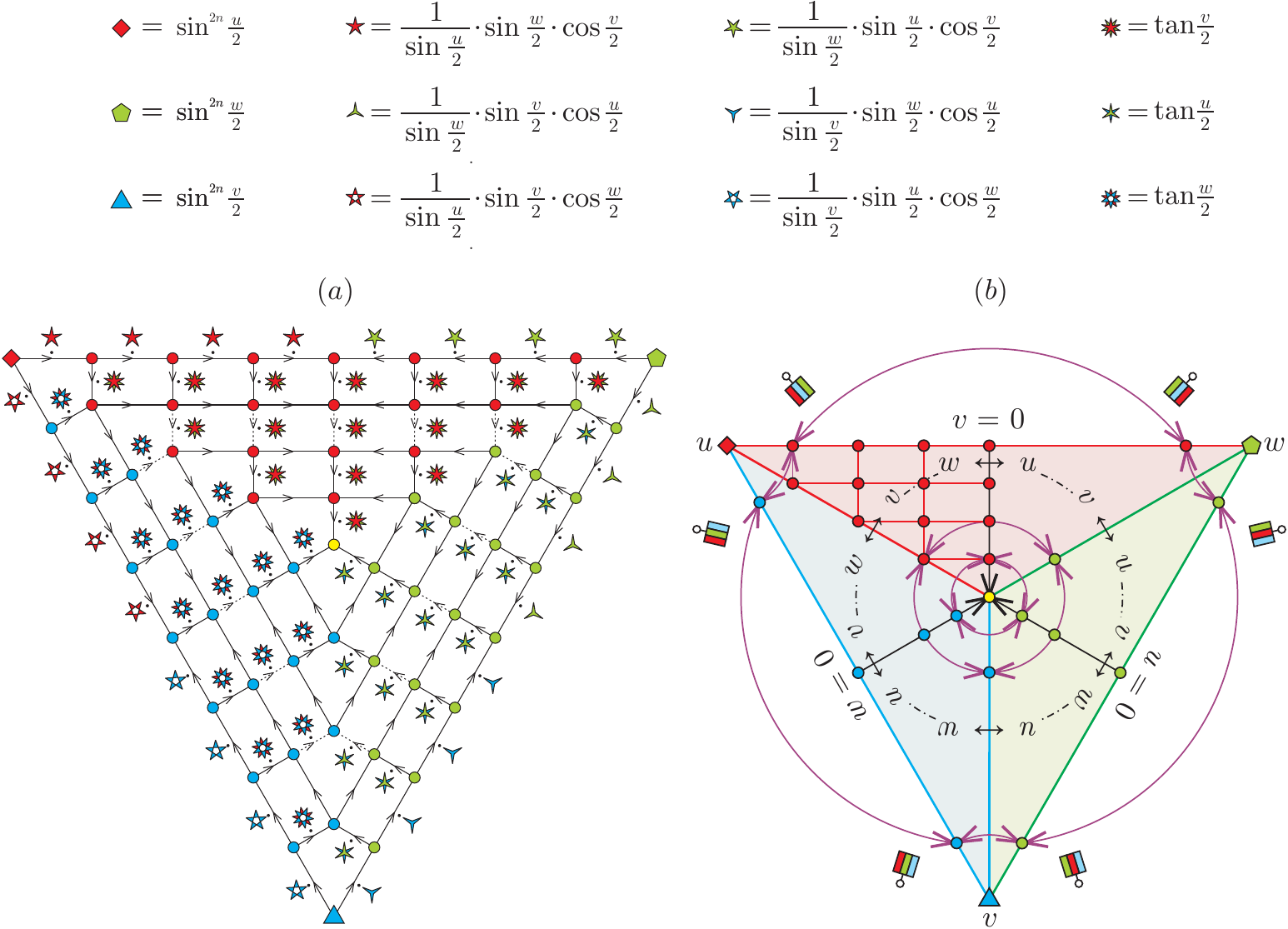}%
\caption{(\emph{a}) Construction rules of constrained trivariate function
systems (\ref{R_system}), (\ref{G_system}) and (\ref{B_system}). (\emph{b})
The red, green and blue domains correspond to systems (\ref{R_system}),
(\ref{G_system}) and (\ref{B_system}), respectively. Functions associated
with the red, green and blue shaded domains are symmetric in variables
$\left(  w,u\right)  $, $\left(  v,w\right)  $ and $\left(  u,v\right)  $,
respectively. Functions which correspond to the adjacent domains red--green,
green--blue and blue--red are symmetric in variables $\left(  v,u\right)  $,
$\left(  u,w\right)  $ and $\left(  w,v\right)  $, respectively. (For
interpretation of the references to color in this figure legend, the reader is
referred to the web version of this paper.)}%
\label{fig:oriented_graph}%
\end{center}
\end{figure}
The three outermost nodes (i.e., the vertices of the outermost triangle) store
the functions $\sin^{2n}\frac{u}{2}$, $\sin^{2n}\frac{v}{2}$ and $\sin
^{2n}\frac{w}{2}$. Each directed edge has a weight function that defines a
multiplication factor when one follows a path from a given node to another
one. Tracking a given path and multiplying by weight functions along the
edges, determines the function stored in an inner node of the graph. Observe
that the layout of these multiplicative weight functions ensures that all
paths starting from an outermost node and terminating at the innermost one (i.e.,
the common centroid of triangles) generate the same constrained trivariate function
$\sin^{n}\frac{u}{2}\sin^{n}\frac{v}{2}\sin^{n}\frac{w}{2}$. Based on these
multiplication rules, one can introduce the constrained trivariate function
systems%
\begin{align}
R_{2n}^{\alpha} &  =\left\{  R_{2n,2n-i,j}^{\alpha}\left(  u,v,w\right)
:\left(  u,v,w\right)  \in\Omega^{\alpha}\right\}  _{j=0,i=j}^{n,2n-j}%
,\label{R_system}\\
G_{2n}^{\alpha} &  =\left\{  G_{2n,2n-i,j}^{\alpha}\left(  u,v,w\right)
:\left(  u,v,w\right)  \in\Omega^{\alpha}\right\}  _{j=0,i=j}^{n,2n-j}%
\label{G_system}%
\end{align}
and%
\begin{equation}
B_{2n}^{\alpha}=\left\{  B_{2n,2n-i,j}^{\alpha}\left(  u,v,w\right)  :\left(
u,v,w\right)  \in\Omega^{\alpha}\right\}  _{j=0,i=j}^{n,2n-j},\label{B_system}%
\end{equation}
where%
\begin{align}
\left\{  R_{2n,2n-i,j}^{\alpha}\left(  u,v,w\right)  \right\}  _{j=0,i=j}%
^{n,n} &  =\left\{  \sin^{2n-i}\frac{u}{2}\sin^{i}\frac{w}{2}\cos^{i-j}%
\frac{v}{2}\sin^{j}\frac{v}{2}\right\}  _{j=0,i=j}^{n,n}%
,\label{R_system_leq_n}\\
& \nonumber\\
\left\{  R_{2n,2n-i,j}^{\alpha}\left(  u,v,w\right)  \right\}  _{j=0,i=n+1}%
^{n-1,2n-j} &  =\left\{  R_{2n,i,j}^{\alpha}\left(  w,v,u\right)  \right\}
_{j=0,i=n+1}^{n-1,2n-j}\label{R_system_geq_n}\\
&  =\left\{  \sin^{i}\frac{w}{2}\sin^{2n-i}\frac{u}{2}\cos^{2n-i-j}\frac{v}%
{2}\sin^{j}\frac{v}{2}\right\}  _{j=0,i=n+1}^{n-1,2n-j},\nonumber
\end{align}
and due to the symmetry%
\begin{align}
\left\{  G_{2n,2n-i,j}^{\alpha}\left(  u,v,w\right)  \right\}  _{j=0,i=j}%
^{n,n} &  =\left\{  R_{2n,2n-i,j}^{\alpha}\left(  w,u,v\right)  \right\}
_{j=0,i=j}^{n,n}\\
&  =\left\{  \sin^{2n-i}\frac{w}{2}\sin^{i}\frac{v}{2}\cos^{i-j}\frac{u}%
{2}\sin^{j}\frac{u}{2}\right\}  _{j=0,i=j}^{n,n},\nonumber\\
& \nonumber\\
\left\{  G_{2n,2n-i,j}^{\alpha}\left(  u,v,w\right)  \right\}  _{j=0,i=n+1}%
^{n-1,2n-j} &  =\left\{  G_{2n,i,j}^{\alpha}\left(  u,w,v\right)  \right\}
_{j=0,i=n+1}^{n-1,2n-j}\\
&  =\left\{  \sin^{i}\frac{v}{2}\sin^{2n-i}\frac{w}{2}\cos^{2n-i-j}\frac{u}%
{2}\sin^{j}\frac{u}{2}\right\}  _{j=0,i=n+1}^{n-1,2n-j},\nonumber
\end{align}%
\begin{align}
\left\{  B_{2n,2n-i,j}^{\alpha}\left(  u,v,w\right)  \right\}  _{j=0,i=j}%
^{n,n} &  =\left\{  R_{2n,2n-i,j}^{\alpha}\left(  v,w,u\right)  \right\}
_{j=0,i=j}^{n,n}\\
&  =\left\{  \sin^{2n-i}\frac{v}{2}\sin^{i}\frac{u}{2}\cos^{i-j}\frac{w}%
{2}\sin^{j}\frac{w}{2}\right\}  _{j=0,i=j}^{n,n},\nonumber\\
& \nonumber\\
\left\{  B_{2n,2n-i,j}^{\alpha}\left(  u,v,w\right)  \right\}  _{j=0,i=n+1}%
^{n-1,2n-j} &  =\left\{  B_{2n,i,j}^{\alpha}\left(  v,u,w\right)  \right\}
_{j=0,i=n+1}^{n-1,2n-j}\\
&  =\left\{  \sin^{i}\frac{u}{2}\sin^{2n-i}\frac{v}{2}\cos^{2n-i-j}\frac{w}%
{2}\sin^{j}\frac{w}{2}\right\}  _{j=0,i=n+1}^{n-1,2n-j}.\nonumber
\end{align}

Observe that function systems (\ref{R_system}), (\ref{G_system}) and
(\ref{B_system}) consist of non-negative functions which fulfill the six
symmetry properties illustrated in Fig.\ \ref{fig:oriented_graph}(\emph{b}).

In what follows, we study the properties of the constrained trivariate
non-negative joint function system%
\begin{equation}%
\begin{array}
[c]{ccl}%
T_{2n}^{\alpha} & = & \left\{  R_{2n,2n-i,j}^{\alpha}\left(  u,v,w\right)
,G_{2n,2n-i,j}^{\alpha}\left(  u,v,w\right)  ,B_{2n,2n-i,j}^{\alpha}\left(
u,v,w\right)  :\left(  u,v,w\right)  \in\Omega^{\alpha}\right\}
_{j=0,i=j}^{n-1,2n-1-j}\\
&  & \\
&  & \cup\left\{  R_{2n,n,n}^{\alpha}\left(  u,v,w\right)  =G_{2n,n,n}%
^{\alpha}\left(  u,v,w\right)  =B_{2n,n,n}^{\alpha}\left(  u,v,w\right)
:\left(  u,v,w\right)  \in\Omega^{\alpha}\right\}
\end{array}
\label{united_system}%
\end{equation}
of order $n\geq1$. We refer to the index $j$ of these functions as levels
since they correspond to the nested triangles shrinking from the boundary to
their common centroid depicted in Fig.\ \ref{fig:oriented_graph}(\emph{a}).
Fig.\ \ref{fig:united_system_n_3} illustrates the layout of the constrained
trivariate function system $T_{6}^{\alpha}$ of order $3$.%

\begin{figure}
[!htb]
\begin{center}
\includegraphics[
height=2.3281in,
width=2.8859in
]%
{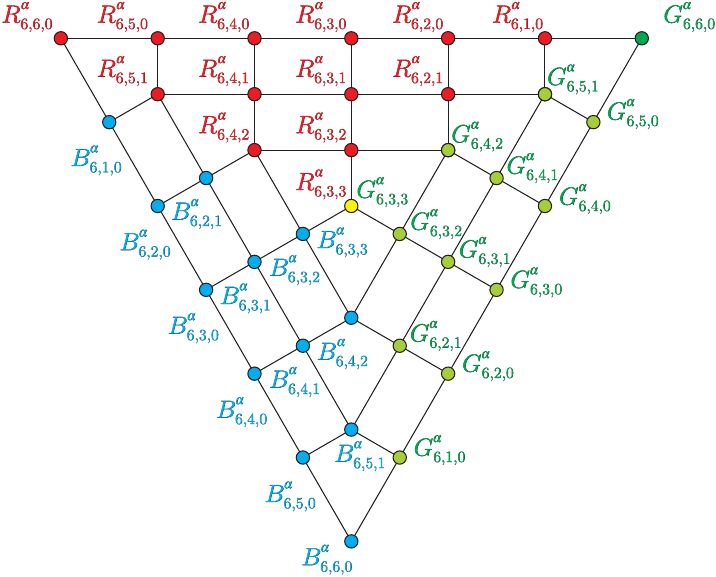}%
\caption{The layout of the constrained trivariate joint function system
(\ref{united_system}) of order $n=3$.}%
\label{fig:united_system_n_3}%
\end{center}
\end{figure}

Note that the function system (\ref{united_system}) is reduced to the univariate
B-basis (\ref{univariate_basis}) whenever one of the three variables $u$, $v$
and $w$ vanishes, i.e., the system fulfills the boundary properties%
\begin{align}
\left.  T_{2n}^{\alpha}\right\vert _{v=0} &  =\left\{  \frac{1}{c_{2n,2n-i}%
^{\alpha}}A_{2n,2n-i}^{\alpha}\left(  u\right)  :u\in\left[  0,\alpha\right]
\right\}  _{i=0}^{2n},\label{boundary_property_1}\\
\left.  T_{2n}^{\alpha}\right\vert _{u=0} &  =\left\{  \frac{1}{c_{2n,2n-i}%
^{\alpha}}A_{2n,2n-i}^{\alpha}\left(  w\right)  :w\in\left[  0,\alpha\right]
\right\}  _{i=0}^{2n},\label{boundary_property_2}\\
\left.  T_{2n}^{\alpha}\right\vert _{w=0} &  =\left\{  \frac{1}{c_{2n,2n-i}%
^{\alpha}}A_{2n,2n-i}^{\alpha}\left(  v\right)  :v\in\left[  0,\alpha\right]
\right\}  _{i=0}^{2n}.\label{boundary_property_3}%
\end{align}

\begin{proposition}
[\textbf{Bernstein polynomials of degree $2n$ as special case}]\label{Bernstein}If one uses the linear reparametrization $t\left(  s\right)  =\alpha s,~s\in\left[
0,1\right]  $, then basis functions (\ref{univariate_basis}) of order $n$ converge to the
Bernstein polynomials of degree $2n$ when $\alpha\rightarrow0$, i.e.,%
\begin{equation}
\lim_{\alpha\rightarrow0}A_{2n,i}^{\alpha}\left( \alpha s\right)  =B_{i}^{2n}\left(
s\right)  ,\, \forall s \in \left[0,1\right],~i=0,1,\ldots,2n\text{.} \label{limit_Bernstein}%
\end{equation}

\end{proposition}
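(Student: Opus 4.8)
The plan is to substitute the reparametrization $t=\alpha s$ directly and factor $A_{2n,i}^{\alpha}(\alpha s)$ into two pieces whose limits can be treated separately: a product of trigonometric ratios that will become the monomial $s^{i}(1-s)^{2n-i}$, and the normalizing sum that will collapse to a binomial coefficient. Combining the closed form of $c_{2n,i}^{\alpha}$ with the definition of $A_{2n,i}^{\alpha}$ and distributing the factor $\sin^{-2n}\frac{\alpha}{2}$ among the trigonometric powers, I would write
\[
A_{2n,i}^{\alpha}(\alpha s)=\left(  \frac{\sin\frac{\alpha(1-s)}{2}}{\sin\frac{\alpha}{2}}\right)^{2n-i}\left(  \frac{\sin\frac{\alpha s}{2}}{\sin\frac{\alpha}{2}}\right)^{i}\sum_{r=0}^{\left\lfloor \frac{i}{2}\right\rfloor }\binom{n}{i-r}\binom{i-r}{r}\left(  2\cos\frac{\alpha}{2}\right)^{i-2r}.
\]

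Second, I would pass to the limit $\alpha\rightarrow0$ factor by factor. Using $\lim_{x\rightarrow0}\frac{\sin(\beta x)}{\sin x}=\beta$ with $x=\frac{\alpha}{2}$, the two bracketed ratios tend to $(1-s)^{2n-i}$ and $s^{i}$ respectively, while $\cos\frac{\alpha}{2}\rightarrow1$ reduces the normalizing sum to the $\alpha$-independent quantity $\sum_{r=0}^{\left\lfloor i/2\right\rfloor }\binom{n}{i-r}\binom{i-r}{r}2^{i-2r}$. Since all factors converge and there are finitely many of them, collecting the limits gives
\[
\lim_{\alpha\rightarrow0}A_{2n,i}^{\alpha}(\alpha s)=\left(  \sum_{r=0}^{\left\lfloor i/2\right\rfloor }\binom{n}{i-r}\binom{i-r}{r}2^{i-2r}\right)  s^{i}(1-s)^{2n-i}.
\]

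The crux is therefore the purely combinatorial identity $\sum_{r=0}^{\left\lfloor i/2\right\rfloor }\binom{n}{i-r}\binom{i-r}{r}2^{i-2r}=\binom{2n}{i}$, which I would establish by a generating-function computation. Summing the left-hand side against $x^{i}$ and reindexing with $m=i-r$, the double sum factors as $\sum_{m}\binom{n}{m}(2x)^{m}\sum_{r}\binom{m}{r}(x/2)^{r}=\sum_{m}\binom{n}{m}(2x+x^{2})^{m}=(1+2x+x^{2})^{n}=(1+x)^{2n}$, whose coefficient of $x^{i}$ is exactly $\binom{2n}{i}$. Substituting this value yields $\binom{2n}{i}s^{i}(1-s)^{2n-i}=B_{i}^{2n}(s)$, which is the claim. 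I expect the limit passage to be routine; the only real (and minor) obstacle is recognizing and verifying the binomial identity, and here the generating function collapses neatly precisely because $1+2x+x^{2}=(1+x)^{2}$.
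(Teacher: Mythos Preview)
Your argument is correct. The factorization, the elementary limits $\frac{\sin(\beta x)}{\sin x}\to\beta$ and $\cos\frac{\alpha}{2}\to1$, and the generating-function proof of $\sum_{r}\binom{n}{i-r}\binom{i-r}{r}2^{i-2r}=\binom{2n}{i}$ via $(1+2x+x^{2})^{n}=(1+x)^{2n}$ are all sound, and together they give the claimed term-by-term convergence.

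The paper takes a different route. Rather than working from the closed form of $c_{2n,i}^{\alpha}$, it invokes the structural fact (from S\'anchez-Reyes) that the system $\{A_{2n,i}^{\alpha}\}$ arises by expanding $\bigl(A_{2,0}^{\alpha}+A_{2,1}^{\alpha}+A_{2,2}^{\alpha}\bigr)^{n}=1$ and grouping the multinomial terms by the exponent of $\sin\frac{t}{2}$. Since the three first-order functions converge to $(1-s)^{2}$, $2(1-s)s$, $s^{2}$, the same expansion in the limit becomes $\bigl((1-s)+s\bigr)^{2n}=\sum_{i}B_{i}^{2n}(s)$, and the grouped terms match. This is shorter and avoids the explicit binomial identity, but it leans on the external construction and leaves the term-by-term matching implicit. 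Your approach is more self-contained and yields the identity $\lim_{\alpha\to0}\sin^{2n}\tfrac{\alpha}{2}\,c_{2n,i}^{\alpha}=\binom{2n}{i}$ as an explicit by-product; the paper's approach has the advantage of explaining \emph{why} the identity holds without any combinatorics, since it is inherited from the $n$-th power structure.
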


\begin{proof}
In Reference \cite{SReyes1998} functions (\ref{univariate_basis}) were
obtained by raising the identity%
\[
A_{2n,0}^{\alpha}\left(  \alpha s\right)  +A_{2n,1}^{\alpha}\left( \alpha s\right)
+A_{2n,2}^{\alpha}\left(  \alpha s\right)  =1,~\forall s\in\left[  0,1\right]
\]
to the power $n$, i.e.,
\[
\sum_{i=0}^{2n}A_{2n,i}^{\alpha}\left(  \alpha s\right)  =\left(  A_{2n,0}^{\alpha
}\left(  s\right)  +A_{2n,1}^{\alpha}\left(  s\right)  +A_{2n,2}^{\alpha
}\left(  s\right)  \right)  ^{n}=1,~\forall s\in\left[  0,1\right]
\]
which in the limiting case $\alpha\rightarrow0$ takes the form%
\begin{align*}
&  \lim_{\alpha\rightarrow0}\left(  \frac{1}{\sin^{2}\left(  \frac{\alpha}%
{2}\right)  }\sin^{2}\left(  \frac{\alpha-\alpha s}{2}\right)  +\frac
{2\cos\left(  \frac{\alpha}{2}\right)  }{\sin^{2}\left(  \frac{\alpha}%
{2}\right)  }\sin\left(  \frac{\alpha-\alpha s}{2}\right)  \sin\left(
\frac{\alpha s}{2}\right)  +\frac{1}{\sin^{2}\left(  \frac{\alpha}{2}\right)
}\sin^{2}\left(  \frac{\alpha s}{2}\right)  \right)  ^{n}\\
=  &  \left(  \left(  1-s\right)  ^{2}+2\left(  1-s\right)  s+s^{2}\right)
^{n}\\
=  &  \left(  \left(  1-s\right)  +s\right)  ^{2n}\\
=  &  \sum_{i=0}^{2n}B_{i}^{2n}\left(  s\right)
\end{align*}
for all values of $s\in\left[  0,1\right]  $,
since $
\lim\limits_{\alpha\rightarrow0}\frac{\sin\left(  \frac{\alpha s}{2}\right)  }%
{\sin\left(  \frac{\alpha}{2}\right)  }  =s$ and $\lim\limits_{\alpha\rightarrow0}\cos\left(  \frac{\alpha s}{2}\right)   =1.$
\end{proof}

\section{\label{sec:linear_independence}Linear independence}

The linear independence of the joint function system (\ref{united_system}) will be proved by using higher order mixed partial derivatives.
In order to evaluate these derivates, when one of the constrained variables equals $0$, we
have to study the behavior of functions%
\begin{align*}
C_{i,\lambda}^{p}\left(  t\right)   &  =\frac{\text{d}^{p}}{\text{d}t^{p}}%
\cos^{i}\left(  \lambda t\right)  ,~t\in%
\mathbb{R}
,\\
S_{j,\mu}^{p}\left(  t\right)   &  =\frac{\text{d}^{p}}{\text{d}t^{p}}\sin
^{j}\left(  \mu t\right)  ,~t\in%
\mathbb{R}%
\end{align*}
and%
\[
M_{i,\lambda,j,\mu}^{z}\left(  t\right)  =\frac{\text{d}^{z}}{\text{d}t^{z}%
}\left(  \cos^{i}\left(  \lambda t\right)  \sin^{j}\left(  \mu t\right)
\right)
\]
at $t=0$, where exponents $i$, $j$ and orders $p$, $z$ are natural numbers, while
angular velocities $\lambda,\mu>0$ are real parameters. The proofs of the following Lemma \ref{lem:sign_cos_power_derivative} and Proposition \ref{derivative_cosine_sine} can be found in \cite{tech_report}.

\begin{lemma}
\label{lem:sign_cos_power_derivative}
Let $i$, $j$ and $p$ be natural numbers greater than or equal to $1$. If
$\lambda,\mu>0$, then signs of values $\left\{  C_{i,\lambda}^{p}\left(
0\right)  \right\}  _{p\geq0}$ and $\left\{  S_{j,\mu}^{p}\left(  0\right)
\right\}  _{p\geq0}$ are%
\begin{equation}%
\operatorname{sign}%
C_{i,\lambda}^{p}\left(  0\right)  =\left\{
\begin{array}
[c]{rl}%
-1, & ~p-i=2r,~r\equiv1\left(  \operatorname{mod}2\right)  ,\\
0, & ~p\equiv1\left(  \operatorname{mod}2\right)  ,\\
+1, & ~p-i=2r,~r\equiv0\left(  \operatorname{mod}2\right)
\end{array}
\right.  \label{sign_cos_power_derivative}%
\end{equation}
and%
\begin{equation}%
\operatorname{sign}%
S_{j,\mu}^{p}\left(  0\right)  =\left\{
\begin{array}
[c]{rl}%
-1, & ~p\geq j,~p-j=2r,~r\equiv1\left(  \operatorname{mod}2\right)  ,\\
0, & ~\left(  p<j\right)  \ \text{or }\left(  p-j\equiv1\left(
\operatorname{mod}2\right)  \text{ and }p>j\right)  ,\\
+1, & ~p\geq j,~p-j=2r,~r\equiv0\left(  \operatorname{mod}2\right)  ,
\end{array}
\right.  \label{sign_sin_power_derivative}%
\end{equation}
respectively. (In particular, $S_{j,\mu}^{j}\left(  0\right)  =j!\mu^{j}$).
\end{lemma}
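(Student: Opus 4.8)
The plan is to reduce both sign tables to a single structural fact about Maclaurin coefficients. Call a power series \emph{alternating--positive of offset $a$} if it has the form $x^{a}\sum_{n\ge 0}(-1)^{n}g_{n}x^{2n}$ with every $g_{n}>0$. From the classical expansions $\sin x=x\sum_{n\ge 0}(-1)^{n}x^{2n}/(2n+1)!$ and $\cos x=\sum_{n\ge 0}(-1)^{n}x^{2n}/(2n)!$ one reads off at once that $\sin x$ is alternating--positive of offset $1$ and $\cos x$ is alternating--positive of offset $0$.

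First I would establish the one auxiliary fact that carries the whole argument: the product of two alternating--positive series, of offsets $a$ and $b$, is again alternating--positive, of offset $a+b$. This is just the Cauchy product: writing the factors as $x^{a}\sum_{n}(-1)^{n}g_{n}x^{2n}$ and $x^{b}\sum_{m}(-1)^{m}h_{m}x^{2m}$, the coefficient of $x^{a+b+2N}$ equals $(-1)^{N}\sum_{n+m=N}g_{n}h_{m}$, and the inner sum is a sum of strictly positive terms, hence $>0$. Iterating this, $\sin^{j}x$ is alternating--positive of offset $j$ and $\cos^{i}x$ is alternating--positive of offset $0$.

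It then remains to read off the derivatives at $t=0$. Since $\lambda,\mu>0$, the substitutions $x=\mu t$ and $x=\lambda t$ only multiply the coefficient of $x^{k}$ by the positive factor $\mu^{k}$, respectively $\lambda^{k}$, so they preserve every sign. Writing $\sin^{j}(\mu t)=\sum_{r\ge 0}(-1)^{r}a_{r}\,t^{j+2r}$ with all $a_{r}>0$, the value $S_{j,\mu}^{p}(0)$ is $p!$ times the coefficient of $t^{p}$; this vanishes exactly when $t^{p}$ does not occur, i.e.\ when $p<j$, or when $p>j$ with $p-j$ odd, and for $p=j+2r$ it has sign $(-1)^{r}$, which is precisely the tabulated pattern, with $S_{j,\mu}^{j}(0)=j!\,\mu^{j}$ coming from $a_{0}=1$. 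The cosine case is identical with offset $0$: the odd--order derivatives vanish because $\cos^{i}(\lambda t)$ is an even function, while $C_{i,\lambda}^{2r}(0)$ has sign $(-1)^{r}$, in particular independent of $i$.

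The computation presents no serious obstacle; the only points requiring care are the bookkeeping of the two parity conditions and the strict positivity of the convolved coefficients. The latter is exactly why the series formulation is preferable to a direct induction on the derivative order through the recursion $\tfrac{d}{dt}\sin^{j}(\mu t)=j\mu\,\sin^{j-1}(\mu t)\cos(\mu t)$: such a recursion couples the sine and cosine towers and would force one to track sign cancellations, whereas in the Cauchy--product approach every summand contributing to a given coefficient already carries the same sign, so no cancellation can occur.
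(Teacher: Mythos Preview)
Your argument is correct and self-contained. The observation that ``alternating--positive'' series are closed under multiplication (via the Cauchy product, in which every term contributing to a fixed coefficient already carries the same sign) immediately gives the Maclaurin sign pattern for $\sin^{j}(\mu t)$ and $\cos^{i}(\lambda t)$, and the derivative signs at $t=0$ are then read off as $p!$ times the coefficient of $t^{p}$. The sine case matches the stated formula exactly, including $S_{j,\mu}^{j}(0)=j!\,\mu^{j}$.

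A direct comparison with the paper's own proof is not possible here, since the paper defers the argument to an external technical report. Your power-series route is in any case a clean choice: a recursive approach through $\tfrac{d}{dt}\cos^{i}(\lambda t)=-i\lambda\cos^{i-1}(\lambda t)\sin(\lambda t)$ would, as you note, force one to track mixed $\cos$--$\sin$ products, whereas your closure lemma handles all powers uniformly with no cancellation to control.

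One point deserves an explicit remark in your write-up. Your conclusion for the cosine case is that $C_{i,\lambda}^{2s}(0)$ has sign $(-1)^{s}$, \emph{independently of $i$}. The formula printed in the lemma, however, reads ``$p-i=2r$'' with the sign determined by the parity of $r$, which would make the sign depend on $i$. The printed version cannot be right: for $i=2$, $p=2$ one computes $C_{2,\lambda}^{2}(0)=-2\lambda^{2}<0$, while $p-i=0$ gives $r=0$ and hence predicts sign $+1$; and for odd $i$ with even $p$ the condition $p-i=2r$ is not even satisfiable, yet the derivative is nonzero. The intended condition is almost certainly $p=2r$ (offset $0$, parallel to the sine case with offset $j$), which is exactly what you prove. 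Rather than silently identifying your $r$ with the paper's, you should flag this discrepancy.
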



\begin{proposition}
\label{derivative_cosine_sine}Independently of $i\in%
\mathbb{N}
$ and $\lambda,\mu>0$, for all values of $j,z\in%
\mathbb{N}
$ such that $i+j\neq0$ and $z\geq1$ we have the equality%
\begin{equation}%
\operatorname{sign}%
M_{i,\lambda,j,\mu}^{z}\left(  0\right)  =\left\{
\begin{array}
[c]{rl}%
-1, & ~z-j\equiv2\left(  \operatorname{mod}4\right)  \text{ and }z\geq j,\\
0, & ~\left(  z<j\right)  \text{ or }\left(  z-j\equiv\pm1\left(
\operatorname{mod}4\right)  \text{ and }z>j\right)  ,\\
+1, & ~z-j\equiv0\left(  \operatorname{mod}4\right)  \text{ and }z\geq j.
\end{array}
\right.  \label{derivative_cosine_sine_at_0}%
\end{equation}

\end{proposition}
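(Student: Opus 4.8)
The plan is to expand the $z$-th derivative of the product by the Leibniz rule and then read off the signs of the individual factors from Lemma \ref{lem:sign_cos_power_derivative}. Writing $P\left( t\right) =\cos ^{i}\left( \lambda t\right) \sin ^{j}\left( \mu t\right) $, I would start from
\[
M_{i,\lambda ,j,\mu }^{z}\left( 0\right) =\sum_{p=0}^{z}\binom{z}{p}C_{i,\lambda }^{p}\left( 0\right) \,S_{j,\mu }^{z-p}\left( 0\right) .
\]
Since $\cos ^{i}\left( \lambda t\right) $ is an even function, every odd-order factor $C_{i,\lambda }^{p}\left( 0\right) $ vanishes, so only the terms with $p=2q$ survive, and for those even orders Lemma \ref{lem:sign_cos_power_derivative} gives $\operatorname{sign}C_{i,\lambda }^{2q}\left( 0\right) =\left( -1\right) ^{q}$. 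The whole difficulty is then to control the sign of the resulting sum, whose terms \emph{a priori} could cancel.

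The key observation I would exploit is that the surviving summands do not interfere: their signs all coincide. Indeed, a term with $p=2q$ is nonzero only when the sine factor $S_{j,\mu }^{z-2q}\left( 0\right) $ is nonzero, which by the sine part of Lemma \ref{lem:sign_cos_power_derivative} forces $z-2q\geq j$ and $z-2q-j$ even, with $\operatorname{sign}S_{j,\mu }^{z-2q}\left( 0\right) =\left( -1\right) ^{\left( z-2q-j\right) /2}$. Multiplying the two signs gives
\[
\operatorname{sign}\!\left( C_{i,\lambda }^{2q}\left( 0\right) \,S_{j,\mu }^{z-2q}\left( 0\right) \right) =\left( -1\right) ^{q}\left( -1\right) ^{\left( z-j\right) /2-q}=\left( -1\right) ^{\left( z-j\right) /2},
\]
which is independent of $q$. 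Because all binomial coefficients are positive and every nonzero term carries the common sign $\left( -1\right) ^{\left( z-j\right) /2}$, no cancellation can occur, and the sum has sign exactly $\left( -1\right) ^{\left( z-j\right) /2}$ as soon as it has one nonzero term. This is precisely the claimed dichotomy $+1$ for $z-j\equiv 0\pmod{4}$ and $-1$ for $z-j\equiv 2\pmod{4}$.

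It then remains to dispose of the degenerate cases and to guarantee non-triviality. If $z<j$ then $z-p<j$ for every $p$, so all sine factors vanish and $M_{i,\lambda ,j,\mu }^{z}\left( 0\right) =0$. If $z-j$ is odd, then no surviving term can have $z-2q-j$ even (as $2q$ is even), so again every summand vanishes; this accounts for the two ``$0$'' cases $z<j$ and $z-j\equiv \pm 1\pmod{4}$. Conversely, when $z\geq j$ and $z-j$ is even the single term $p=0$ already contributes $\binom{z}{0}C_{i,\lambda }^{0}\left( 0\right) S_{j,\mu }^{z}\left( 0\right) =S_{j,\mu }^{z}\left( 0\right) \neq 0$, so the sum is genuinely nonzero and the sign computation above applies. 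The boundary values $i=0$ and $j=0$ (where one factor is the constant $1$, and Lemma \ref{lem:sign_cos_power_derivative} is not directly applicable) are handled separately: there $P$ is a pure power of $\sin$ or $\cos$ and the statement reduces to the corresponding half of Lemma \ref{lem:sign_cos_power_derivative}.

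The main obstacle is exactly this potential cancellation inside the Leibniz sum, and the crux of the argument is the uniform-sign identity displayed above, which shows there is none. As a cross-check and an entirely self-contained alternative that bypasses the Lemma, I would use the substitution $t\mapsto \mathrm{i}t$: since $\cos \left( \lambda \mathrm{i}t\right) =\cosh \left( \lambda t\right) $ and $\sin \left( \mu \mathrm{i}t\right) =\mathrm{i}\sinh \left( \mu t\right) $, one obtains $P\left( \mathrm{i}t\right) =\mathrm{i}^{\,j}\cosh ^{i}\left( \lambda t\right) \sinh ^{j}\left( \mu t\right) $, a power series with strictly positive coefficients from order $t^{j}$ on. Comparing this with $P\left( \mathrm{i}t\right) =\sum_{z}\frac{P^{\left( z\right) }\left( 0\right) }{z!}\mathrm{i}^{\,z}t^{z}$ and using that the nonzero terms satisfy $z\equiv j\pmod{2}$ with $z\geq j$ reproduces $\operatorname{sign}M_{i,\lambda ,j,\mu }^{z}\left( 0\right) =\left( -1\right) ^{\left( z-j\right) /2}$ directly, the positivity of the hyperbolic coefficients furnishing both the strict sign and the vanishing below order $j$.
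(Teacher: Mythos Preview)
Your argument is correct. The paper does not actually prove Proposition~\ref{derivative_cosine_sine} in the text---it defers both this proposition and Lemma~\ref{lem:sign_cos_power_derivative} to the technical report \cite{tech_report}---so there is no in-paper proof to compare against directly. Your Leibniz-rule expansion together with the key observation that every surviving summand carries the common sign $(-1)^{(z-j)/2}$ is exactly the right mechanism to rule out cancellation, and the non-triviality check via the $p=0$ term cleanly settles the case $z\geq j$ with $z-j$ even. The alternative route through $t\mapsto\mathrm{i}t$ and the strict positivity of the Taylor coefficients of $\cosh^{i}(\lambda t)\,\sinh^{j}(\mu t)$ is an elegant self-contained confirmation that in fact subsumes the needed content of Lemma~\ref{lem:sign_cos_power_derivative} (and sidesteps any reliance on its somewhat awkward phrasing in terms of $p-i=2r$).
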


Under the parametrization%
\begin{equation}
\left\{
\begin{array}
[c]{rcl}%
u\left(  x,y\right)  & = & \alpha-x,~x\in\left[  0,\alpha\right]  ,\\
v\left(  x,y\right)  & = & y,~y\in\left[  0,\alpha-x\right]  ,\\
w\left(  x,y\right)  & = & x-y,
\end{array}
\right.  \label{parametrization_for_partition_of_unity}%
\end{equation}
of $\Omega^{\alpha}$, one can easily show that the $r$th order ($r\geq0$)
partial derivative of any smooth function $L:\Omega^{\alpha}\rightarrow%
\mathbb{R}
$ with respect to the variable $y$ is%
\begin{equation}
\frac{\partial^{r}}{\partial y^{r}}L\left(  u\left(  x,y\right)  ,v\left(
x,y\right)  ,w\left(  x,y\right)  \right)  =\sum_{k=0}^{r}\dbinom{r}{k}\left(
-1\right)  ^{k}\frac{\partial^{r}}{\partial v^{r-k}\partial w^{k}}L\left(
u\left(  x,y\right)  ,v\left(  x,y\right)  ,w\left(  x,y\right)  \right)  .
\label{chain_rule}%
\end{equation}
If the function $L$ is defined as the linear combination%
\begin{equation}%
\begin{array}
[c]{ccl}%
L\left(  u,v,w\right)  & = & \rho_{2n,n,n}R_{2n,n,n}^{\alpha}\left(
u,v,w\right)  +%
{\displaystyle\sum\limits_{j=0}^{n-1}}
{\displaystyle\sum\limits_{i=j}^{2n-1-j}}
\rho_{2n,2n-i,j}R_{2n,2n-i,j}^{\alpha}\left(  u,v,w\right) \\
&  & +%
{\displaystyle\sum\limits_{j=0}^{n-1}}
{\displaystyle\sum\limits_{i=j}^{2n-1-j}}
\gamma_{2n,2n-i,j}G_{2n,2n-i,j}^{\alpha}\left(  u,v,w\right)  +%
{\displaystyle\sum\limits_{j=0}^{n-1}}
{\displaystyle\sum\limits_{i=j}^{2n-1-j}}
\beta_{2n,2n-i,j}B_{2n,2n-i,j}^{\alpha}\left(  u,v,w\right)
\end{array}
\label{linear_combination_union}%
\end{equation}
where $\rho_{2n,n,n}$, $\left\{  \rho_{2n,2n-i,j}\right\}  _{j=0,i=j}%
^{n-1,2n-1-j}$, $\left\{  \gamma_{2n,2n-i,j}\right\}  _{j=0,i=j}^{n-1,2n-1-j}$
and $\left\{  \beta_{2n,2n-i,j}\right\}  _{j=0,i=j}^{n-1,2n-1-j}$ are real
scalars, then, by the help of Proposition \ref{derivative_cosine_sine}, one
can easily determine those functions of system (\ref{united_system}) the
higher order mixed partial derivatives ($\frac{\partial^{r}}{\partial
v^{r-k}\partial w^{k}}\left(  \cdot\right)  $, $k=0,1,\ldots,r$) of which
vanish when one evaluates the terms of (\ref{chain_rule}) at $y=0$. In the
case of $n=4$, we have provided an example in Fig.\ \ref{fig:derivative_scheme_n_4}.%

\begin{figure}
[!htb]
\begin{center}
\includegraphics[
height=5.4388in,
width=6.333in
]%
{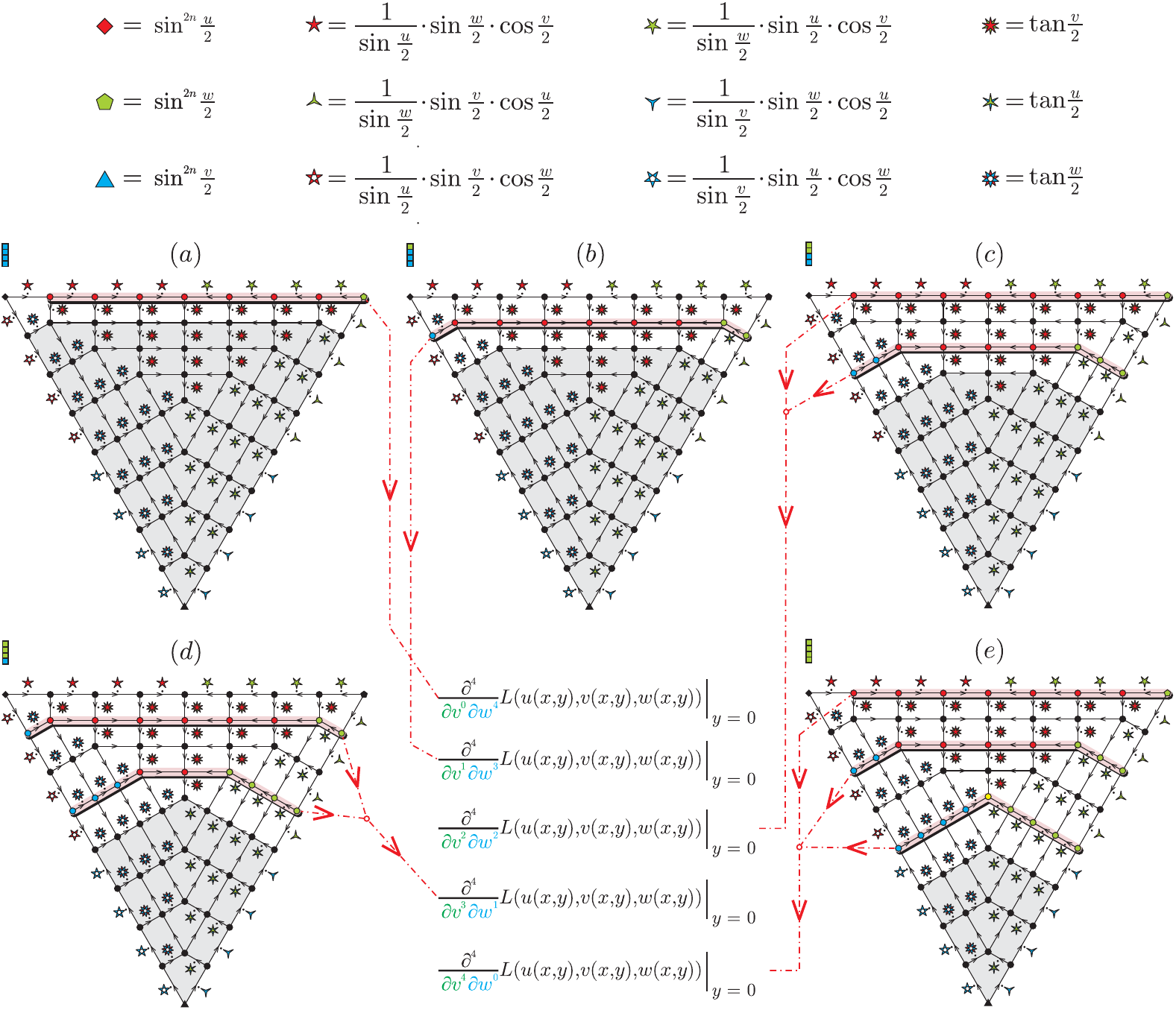}%
\caption{(\emph{a})--(\emph{e}) Using the sign function
(\ref{derivative_cosine_sine_at_0}), one can easily verify that black and
non-black dots represent those functions of system (\ref{united_system}) the
$4$th order mixed partial derivatives of which with respect to $v\left(
x,y\right)  =y$ and $w\left(  x,y\right)  =x-y$ are zero and non-zero,
respectively, under parametrization
(\ref{parametrization_for_partition_of_unity}) at $y=0$. Observe, that the
gray shaded areas correspond to functions which comprise the factor $\sin
\frac{v\left(  x,y\right)  }{2}=\sin\frac{y}{2}$ raised to a power greater
than the corresponding order $k=0,1,\ldots,4$ of the partial derivative with
respect to $v\left(  x,y\right)  =y$. Thus, these partial derivatives vanish
at $y=0$ due to the first condition of the zero branch of the sign function
(\ref{derivative_cosine_sine_at_0}). Black dots that fall outside of the gray
shaded areas correspond to constrained trivariate functions that comprise the factor
$\sin\frac{v\left(  x,y\right)  }{2}=\sin\frac{y}{2}$ raised to a power which
-- together with the order $k=0,1,\ldots,4$ of the partial derivative with
respect to $v\left(  x,y\right)  =y$, at $y=0$ -- fulfills the second
condition of the zero branch of the sign function
(\ref{derivative_cosine_sine_at_0}). (For interpretation of the references to
color in this figure legend, the reader is referred to the web version of this
paper.)}%
\label{fig:derivative_scheme_n_4}%
\end{center}
\end{figure}

\begin{theorem}
[\textbf{Linear independence of the joint systems}]%
\label{joint_systems_linear_independence}The function system
(\ref{united_system}) is linearly independent and the dimension of the vector
space $\mathcal{T}_{2n}^{\alpha}=~$\emph{span}$\emph{~}T_{2n}^{\alpha}$ is
\[
\delta_{n}=3n\left(  n+1\right)  +1.
\]

\end{theorem}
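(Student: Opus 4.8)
The plan is to reduce the statement to linear independence and then prove the latter by a derivative-peeling induction. First I would note that the dimension formula is essentially a counting statement: identifying the three coincident central functions $R_{2n,n,n}^{\alpha}=G_{2n,n,n}^{\alpha}=B_{2n,n,n}^{\alpha}$ as a single element, the system \eqref{united_system} has $3\sum_{j=0}^{n-1}(2n-2j)+1=3n(n+1)+1=\delta_{n}$ members, so it suffices to show that \eqref{united_system} is linearly independent. Accordingly I would assume a vanishing linear combination $L\equiv 0$ on $\Omega^{\alpha}$ of the form \eqref{linear_combination_union} and force every coefficient to be zero.

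The engine is the chain rule \eqref{chain_rule} under the parametrization \eqref{parametrization_for_partition_of_unity}, combined with the sign information of Proposition \ref{derivative_cosine_sine}. Each member of the $R$-system carries the factor $\sin^{j}\frac{v}{2}$, where $j$ is its level; by Proposition \ref{derivative_cosine_sine} such a factor makes the function and all its $y$-derivatives of order $<j$ vanish at $y=0$, while the $k=0$ term of \eqref{chain_rule} produces the first nonzero contribution exactly at order $j$. The $G$- and $B$-members depend on $v$ as well, through $\sin^{i}\frac{v}{2}$ and $\sin^{2n-i}\frac{v}{2}$ respectively, so they too feed into $\partial_{y}^{\,j}L|_{y=0}$. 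The proof is therefore an induction on the level $j=0,1,\dots,n-1$, carried out simultaneously through three cyclically rotated copies of \eqref{parametrization_for_partition_of_unity} (peeling from the edges $v=0$, $w=0$, $u=0$), which by the symmetries $G_{2n,2n-i,j}^{\alpha}(u,v,w)=R_{2n,2n-i,j}^{\alpha}(w,u,v)$ and $B_{2n,2n-i,j}^{\alpha}(u,v,w)=R_{2n,2n-i,j}^{\alpha}(v,w,u)$ treat the three systems on an equal footing.

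For the inductive step I would assume that all coefficients of level strictly below $j$ vanish and evaluate $\partial_{y}^{\,j}L|_{y=0}$ in the $v$-peeling frame. The index bookkeeping governed by the zero branch of \eqref{derivative_cosine_sine_at_0} shows that at order $j$ the only survivors are the whole $R$-level $j$ together with a single foreign term: every $v$-power-$j$ member of the $B$-system has $i=2n-j$, hence level $\le j-1$ and is already dead, whereas among the $G$-members with $i=j$ only the diagonal one $G_{2n,2n-j,j}^{\alpha}$ (level $j$) is not yet killed. Restricting to $v=0$ and using the boundary property \eqref{boundary_property_1}, the $R$-level-$j$ survivors collapse to the univariate B-basis functions $A_{2n,k}^{\alpha}$ with $k=j+1,\dots,2n-j$, while the stray $G_{2n,2n-j,j}^{\alpha}$ collapses to $A_{2n,j}^{\alpha}$; these $2n-2j+1$ functions are linearly independent, so all $R$-level-$j$ coefficients together with $\gamma_{2n,2n-j,j}$ must vanish. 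Rotating the frame, the $w$- and $u$-peelings annihilate the $B$- and $G$-levels $j$ (each absorbing one further diagonal coefficient, $\rho_{2n,2n-j,j}$ and $\beta_{2n,2n-j,j}$), and the union kills every level-$j$ coefficient, closing the induction. After $j=n-1$ only $\rho_{2n,n,n}$ survives, and since $R_{2n,n,n}^{\alpha}=\sin^{n}\frac{u}{2}\sin^{n}\frac{v}{2}\sin^{n}\frac{w}{2}\not\equiv 0$ on $\Omega^{\alpha}$, it too must vanish.

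The hard part is precisely this contamination bookkeeping: because the three systems are interwoven and all three depend on every variable, one must verify that at the exact order $j$ every foreign contribution is either of higher $\sin\frac{v}{2}$-power (annihilated by the derivative via the $z<j$ branch of \eqref{derivative_cosine_sine_at_0}) or of level $<j$ (already zero by the inductive hypothesis), leaving exactly one extra diagonal term that the univariate B-basis independence can absorb. Getting the index ranges right on the diagonal $i=j$, at the split $i=n$ between the two defining formulas of each system \eqref{R_system_leq_n}--\eqref{R_system_geq_n}, and at the shared central node is the delicate accounting that the derivative scheme (Fig.\ \ref{fig:derivative_scheme_n_4}) is designed to organize.
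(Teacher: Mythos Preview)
Your proposal is correct and follows essentially the same route as the paper: count the elements, assume a vanishing combination, and run a level-by-level induction in which the $j$th order boundary derivative isolates the level-$j$ $R$-functions together with exactly one foreign diagonal $G$-term, collapsing on the edge to a univariate B-basis relation whose independence kills those coefficients, after which two cyclic rotations of the frame dispose of the $G$- and $B$-levels. The only cosmetic differences are that the paper uses three explicitly written parametrizations \eqref{parametrization_1_y_0}--\eqref{parametrization_3_y_0} applied sequentially (in the order $v=0$, $u=0$, $w=0$) to successively reduced combinations $\widetilde L_r$, $\widehat L_r$, so that the second and third peels no longer see the foreign diagonals $\rho_{2n,2n-j,j}$, $\beta_{2n,2n-j,j}$ you mention (they are already zero from the previous step); your parallel phrasing with redundant overlaps is equally valid.
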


\begin{proof}
It is easy to verify that the number of functions in system
(\ref{united_system}) is exactly%
\[
\delta_{n}=1+3\sum_{j=0}^{n-1}\sum_{i=j}^{2n-1-j}1=1+3n\left(  n+1\right)  .
\]
Consider the linear combination (\ref{linear_combination_union}) and assume
that the equality%
\begin{equation}
L\left(  u,v,w\right)  =0 \label{linear_combination_union_null_equality}%
\end{equation}
holds for all $\left(  u,v,w\right)  \in\Omega^{\alpha}$.

If equality (\ref{linear_combination_union_null_equality}) holds for all
$\left(  u,v,w\right)  \in\Omega^{\alpha}$ then it is also valid under all
possible parametrizations of the definition domain $\Omega^{\alpha}$. In what
follows, we will work with three different parametrizations of $\Omega
^{\alpha}$, namely with%
\begin{equation}
\left\{
\begin{array}
[c]{rcl}%
u_{1}\left(  x,y\right)  & = & x,~x\in\left[  0,\alpha\right]  ,\\
v_{1}\left(  x,y\right)  & = & y,~y\in\left[  0,\alpha-x\right]  ,\\
w_{1}\left(  x,y\right)  & = & \alpha-x-y,
\end{array}
\right.  \label{parametrization_1_y_0}%
\end{equation}%
\begin{equation}
\left\{
\begin{array}
[c]{rcl}%
u_{2}\left(  x,y\right)  & = & x,~x\in\left[  0,\alpha\right]  ,\\
v_{2}\left(  x,y\right)  & = & \alpha-x-y,\\
w_{2}\left(  x,y\right)  & = & y,~y\in\left[  0,\alpha-x\right]  ,
\end{array}
\right.  \label{parametrization_2_x_0}%
\end{equation}
and%
\begin{equation}
\left\{
\begin{array}
[c]{rcl}%
u_{3}\left(  x,y\right)  & = & \alpha-x-y,\\
v_{3}\left(  x,y\right)  & = & x,~x\in\left[  0,\alpha\right]  ,\\
w_{3}\left(  x,y\right)  & = & y,~y\in\left[  0,\alpha-x\right]  .
\end{array}
\right.  \label{parametrization_3_y_0}%
\end{equation}
Straight calculations show that%
\begin{align}
\frac{\partial^{r}}{\partial y^{r}}L\left(  u_{1}\left(  x,y\right)
,v_{1}\left(  x,y\right)  ,w_{1}\left(  x,y\right)  \right)   &  =\sum
_{k=0}^{r}\dbinom{r}{k}\left(  -1\right)  ^{k}\frac{\partial^{r}}{\partial
v_{1}^{r-k}\partial w_{1}^{k}}L\left(  u_{1}\left(  x,y\right)  ,v_{1}\left(
x,y\right)  ,w_{1}\left(  x,y\right)  \right)
,\label{derivative_formula_parametrization_1}\\
& \nonumber\\
\frac{\partial^{r}}{\partial x^{r}}L\left(  u_{2}\left(  x,y\right)
,v_{2}\left(  x,y\right)  ,w_{2}\left(  x,y\right)  \right)   &  =\sum
_{k=0}^{r}\dbinom{r}{k}\left(  -1\right)  ^{k}\frac{\partial^{r}}{\partial
u_{2}^{r-k}\partial v_{2}^{k}}L\left(  u_{2}\left(  x,y\right)  ,v_{2}\left(
x,y\right)  ,w_{2}\left(  x,y\right)  \right)
,\label{derivative_formula_parametrization_2}\\
& \nonumber\\
\frac{\partial^{r}}{\partial y^{r}}L\left(  u_{3}\left(  x,y\right)
,v_{3}\left(  x,y\right)  ,w_{3}\left(  x,y\right)  \right)   &  =\sum
_{k=0}^{r}\dbinom{r}{k}\left(  -1\right)  ^{k}\frac{\partial^{r}}{\partial
w_{3}^{r-k}\partial u_{3}^{k}}L\left(  u_{3}\left(  x,y\right)  ,v_{3}\left(
x,y\right)  ,w_{3}\left(  x,y\right)  \right)
\label{derivative_formula_parametrization_3}%
\end{align}
for all $x\in\left[  0,\alpha\right]  $,$\ y\in\left[  0,\alpha-x\right]  $
and order $r\in%
\mathbb{N}
$.

In order to prove the statement, we argue by mathematical induction on the
derivation order $r=0,1,\ldots,n$.

In the case of $r=0$ and parametrization (\ref{parametrization_1_y_0}), observe
that at $y=0$ equality (\ref{linear_combination_union_null_equality}) becomes%
\begin{align*}
0 &  =\left.  L\left(  u_{1}\left(  x,y\right)  ,v_{1}\left(  x,y\right)
,w_{1}\left(  x,y\right)  \right)  \right\vert _{y=0}\\
&  =\sum_{i=0}^{2n-1}\rho_{2n,2n-i,0}R_{2n,2n-i,0}^{\alpha}\left(
x,0,\alpha-x\right)  +\gamma_{2n,2n,0}G_{2n,2n,0}^{\alpha}\left(
x,0,\alpha-x\right)  \\
&  =\sum_{i=0}^{2n-1}\rho_{2n,2n-i,0}\sin^{2n-i}\frac{x}{2}\sin^{i}%
\frac{\alpha-x}{2}+\gamma_{2n,2n-i,0}\sin^{2n}\frac{\alpha-x}{2}\\
&  =\sum_{i=0}^{2n}\rho_{2n,2n-i,0}^{\prime}c_{2n,2n-i}^{\alpha}\sin
^{2n-i}\frac{x}{2}\sin^{i}\frac{\alpha-x}{2}\\
&  =\sum_{i=0}^{2n}\rho_{2n,2n-i,0}^{\prime}A_{2n,2n-i}^{\alpha}\left(
x\right)  ,~\forall x\in\left[  0,\alpha\right]  ,
\end{align*}
where we have used the notation%
\[
\rho_{2n,2n-i,0}^{\prime}=\left\{
\begin{array}
[c]{rl}%
\dfrac{\rho_{2n,2n-i,0}}{c_{2n,2n-i}^{\alpha}}, & i=0,1,\ldots,2n-1,\\
& \\
\dfrac{\gamma_{2n,2n,0}}{c_{2n,0}^{\alpha}}, & i=2n
\end{array}
\right.
\]
and the fact that all omitted functions include the factor $\left.  \sin
\frac{v_{1}\left(  x,y\right)  }{2}\right\vert _{y=0}=\left.  \sin\frac{y}%
{2}\right\vert _{y=0}=0$ raised to a power at least one. Since the function
system $\left\{  A_{2n,2n-i}^{\alpha}\left(  x\right)  :x\in\left[
0,\alpha\right]  \right\}  _{i=0}^{2n}$ is linearly independent, it follows
that%
\[
\rho_{2n,2n-i,0}^{\prime}=0,~i=0,1,\ldots,2n.
\]
Thus, equality (\ref{linear_combination_union_null_equality}) can be reduced
to%
\begin{equation}%
\begin{array}
[c]{ccl}%
0 & = & \widetilde{L}\left(  u,v,w\right)  \\
& = & \rho_{2n,n,n}R_{2n,n,n}^{\alpha}\left(  u,v,w\right)  +%
{\displaystyle\sum\limits_{j=1}^{n-1}}
{\displaystyle\sum\limits_{i=j}^{2n-1-j}}
\rho_{2n,2n-i,j}R_{2n,2n-i,j}^{\alpha}\left(  u,v,w\right)  \\
&  & +%
{\displaystyle\sum\limits_{i=1}^{2n-1}}
\gamma_{2n,2n-i,0}G_{2n,2n-i,0}^{\alpha}\left(  u,v,w\right)  +%
{\displaystyle\sum\limits_{j=1}^{n-1}}
{\displaystyle\sum\limits_{i=j}^{2n-1-j}}
\gamma_{2n,2n-i,j}G_{2n,2n-i,j}^{\alpha}\left(  u,v,w\right)  \\
&  & +%
{\displaystyle\sum\limits_{j=0}^{n-1}}
{\displaystyle\sum\limits_{i=j}^{2n-1-j}}
\beta_{2n,2n-i,j}B_{2n,2n-i,j}^{\alpha}\left(  u,v,w\right)  ,~\forall\left(
u,v,w\right)  \in\Omega^{\alpha}.
\end{array}
\label{linear_combination_union_reduced_null_equality_1}%
\end{equation}

Now, considering parametrization (\ref{parametrization_2_x_0}) and the partial
derivative of order $r=0$ of equality
(\ref{linear_combination_union_reduced_null_equality_1}) with respect to the
variable $x$, at $x=0$, we obtain that%
\begin{align*}
0 &  =\left.  \widetilde{L}\left(  u_{2}\left(  x,y\right)  ,v_{2}\left(
x,y\right)  ,w_{2}\left(  x,y\right)  \right)  \right\vert _{x=0}\\
&  =\sum_{i=1}^{2n-1}\gamma_{2n,2n-i,0}G_{2n,2n-i,0}^{\alpha}\left(
0,\alpha-y,y\right)  +\beta_{2n,2n,0}B_{2n,2n,0}^{\alpha}\left(
0,\alpha-y,y\right)  \\
&  =\sum_{i=1}^{2n}\gamma_{2n,2n-i,0}^{\prime}A_{2n,2n-i}^{\alpha}\left(
y\right)  ,~\forall y\in\left[  0,\alpha\right]  ,
\end{align*}
where we have used the notation%
\[
\gamma_{2n,2n-i,0}^{\prime}=\left\{
\begin{array}
[c]{rl}%
\dfrac{\gamma_{2n,2n-i,0}}{c_{2n,2n-i}^{\alpha}}, & i=1,2,\ldots,2n-1,\\
& \\
\dfrac{\beta_{2n,2n,0}}{c_{2n,0}^{\alpha}}, & i=2n
\end{array}
\right.
\]
and the fact that discarded functions include the factor $\left.  \sin
\frac{u_{2}\left(  x,y\right)  }{2}\right\vert _{x=0}=\left.  \sin\frac{x}%
{2}\right\vert _{x=0}=0$ raised to a power greater than or equal to one. Due
to the linear independence of the univariate function system $\left\{
A_{2n,2n-i}^{\alpha}\left(  y\right)  :y\in\left[  0,\alpha\right]  \right\}
_{i=1}^{2n}$, one has that%
\[
\gamma_{2n,2n-i,0}^{\prime}=0,~i=1,2,\ldots,2n.
\]

Therefore, equality (\ref{linear_combination_union_reduced_null_equality_1})
can be simplified to the form%
\begin{equation}%
\begin{array}
[c]{rll}%
0 & = & \widehat{L}\left(  u,v,w\right) \\
& = & \rho_{2n,n,n}R_{2n,n,n}^{\alpha}\left(  u,v,w\right)  +%
{\displaystyle\sum\limits_{j=1}^{n-1}}
{\displaystyle\sum\limits_{i=j}^{2n-1-j}}
\rho_{2n,2n-i,j}R_{2n,2n-i,j}^{\alpha}\left(  u,v,w\right) \\
&  & +%
{\displaystyle\sum\limits_{j=1}^{n-1}}
{\displaystyle\sum\limits_{i=j}^{2n-1-j}}
\gamma_{2n,2n-i,j}G_{2n,2n-i,j}^{\alpha}\left(  u,v,w\right) \\
&  & +%
{\displaystyle\sum\limits_{i=1}^{2n-1}}
\beta_{2n,2n-i,0}B_{2n,2n-i,0}^{\alpha}\left(  u,v,w\right)  +%
{\displaystyle\sum\limits_{j=1}^{n-1}}
{\displaystyle\sum\limits_{i=j}^{2n-1-j}}
\beta_{2n,2n-i,j}B_{2n,2n-i,j}^{\alpha}\left(  u,v,w\right)  ,~\forall\left(
u,v,w\right)  \in\Omega^{\alpha}.
\end{array}
\label{linear_combination_union_reduced_null_equality_2}%
\end{equation}
Performing similar calculations as above, but using parametrization
(\ref{parametrization_3_y_0}) and the zeroth order partial derivative of
equality (\ref{linear_combination_union_reduced_null_equality_2}) with respect
to $y$, at $y=0$, we obtain that%
\begin{align*}
0  &  =\left.  \widehat{L}\left(  u_{3}\left(  x,y\right)  ,v_{3}\left(
x,y\right)  ,w_{3}\left(  x,y\right)  \right)  \right\vert _{y=0}\\
&  =\sum_{i=1}^{2n-1}\beta_{2n,2n-i,0}B_{2n,2n-i,0}^{\alpha}\left(
\alpha-x,x,0\right) \\
&  =\sum_{i=1}^{2n-1}\beta_{2n,2n-i,0}^{\prime}A_{2n,2n-i}^{\alpha}\left(
x\right)  ,~\forall x\in\left[  0,\alpha\right]  ,
\end{align*}
where%
\[
\beta_{2n,2n-i,0}^{\prime}=\frac{\beta_{2n,2n-i,0}}{c_{2n,2n-i}^{\alpha}%
},~i=1,2,\ldots,2n-1,
\]
and, due to the linear independence of the function system $\left\{
A_{2n,2n-i}^{\alpha}\left(  x\right)  :x\in\left[  0\,\alpha\right]  \right\}
_{i=1}^{2n-1}$, we have that%
\[
\beta_{2n,2n-i,0}^{\prime}=0,~i=1,2,\ldots,2n-1.
\]
Hence, equality (\ref{linear_combination_union_reduced_null_equality_2}) can
be reduced to%
\begin{align}
0=  &  L_{0}\left(  u,v,w\right)
\label{linear_combination_union_reduced_null_equality_3}\\
=  &  \rho_{2n,n,n}R_{2n,n,n}^{\alpha}\left(  u,v,w\right)  +\sum_{j=1}%
^{n-1}\sum_{i=j}^{2n-1-j}\rho_{2n,2n-i,j}R_{2n,2n-i,j}^{\alpha}\left(
u,v,w\right) \nonumber\\
&  +\sum_{j=1}^{n-1}\sum_{i=j}^{2n-1-j}\gamma_{2n,2n-i,j}G_{2n,2n-i,j}%
^{\alpha}\left(  u,v,w\right)  +\sum_{j=1}^{n-1}\sum_{i=j}^{2n-1-j}%
\beta_{2n,2n-i,j}B_{2n,2n-i,j}^{\alpha}\left(  u,v,w\right)  ,~\forall\left(
u,v,w\right)  \in\Omega^{\alpha}.\nonumber
\end{align}

Cases (\emph{a})--(\emph{c}) of Fig.\ \ref{fig:linear_independence_j_0}
represent calculations that correspond to the evaluation of the zeroth order
partial derivatives of equalities
(\ref{linear_combination_union_null_equality}),
(\ref{linear_combination_union_reduced_null_equality_1}) and
(\ref{linear_combination_union_reduced_null_equality_2}) detailed above, under
successive parametrizations (\ref{parametrization_1_y_0}%
)--(\ref{parametrization_3_y_0}).%

\begin{figure}
[!htb]
\begin{center}
\includegraphics[
height=3.7516in,
width=6.3607in
]%
{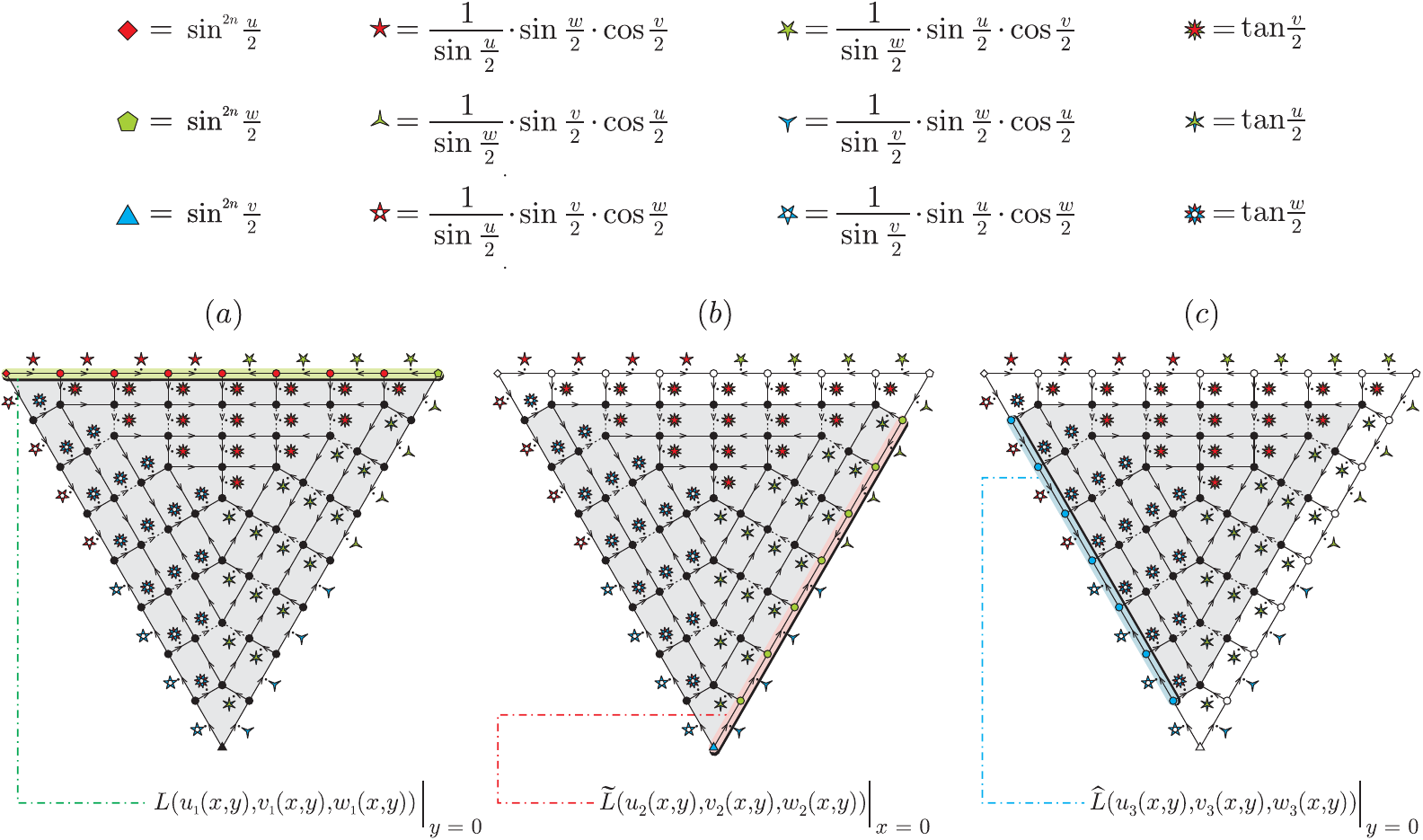}%
\caption{Cases (\emph{a}), (\emph{b}) and (\emph{c}) illustrate three
consecutive steps performed in order to gradually simplify equality
(\ref{linear_combination_union_null_equality}) to its final form
(\ref{linear_combination_union_reduced_null_equality_3}) via intermediate
equalities (\ref{linear_combination_union_reduced_null_equality_1}) and
(\ref{linear_combination_union_reduced_null_equality_2}). In each step, black
dots represent functions whose zeroth order partial derivatives vanish when
one evaluates the zeroth order partial derivatives of equalities
(\ref{linear_combination_union_null_equality}),
(\ref{linear_combination_union_reduced_null_equality_1}) and
(\ref{linear_combination_union_reduced_null_equality_2}) under
parametrizations (\ref{parametrization_1_y_0}), (\ref{parametrization_2_x_0})
and (\ref{parametrization_3_y_0}) with respect to variables $y$, $x$ and $y$,
at $y=0$, $x=0$ and $y=0$, respectively. Red, green and blue colored dots
represent functions the zeroth order partial derivatives of which do not
vanish under corresponding parametrizations and substitutions with $0$. White
dots correspond to functions the combining constants of which proved to be $0$
in the previous steps. The gray shaded shrinking areas show all those
functions which are present in the corresponding equalities. (For
interpretation of the references to color in this figure legend, the reader is
referred to the web version of this paper.)}%
\label{fig:linear_independence_j_0}%
\end{center}
\end{figure}

Now, we formulate and prove an induction hypothesis with respect to the $r$th
order ($r=0,1,\ldots,n-1$) partial derivative of the initial equality
(\ref{linear_combination_union_null_equality}) under successive application of
parametrizations (\ref{parametrization_1_y_0}), (\ref{parametrization_2_x_0})
and (\ref{parametrization_3_y_0}) with respect to variables $y$, $x$ and $y$,
at $y=0$, $x=0$ and $y=0$, respectively. Namely, we assume that the evaluation
of these partial derivatives gradually reduces the initial equality
(\ref{linear_combination_union_null_equality}) to the simpler form%
\begin{equation}%
\begin{array}
[c]{ccl}%
0 & = & L_{r}\left(  u,v,w\right)  \\
& = & \rho_{2n,n,n}R_{2n,n,n}^{\alpha}\left(  u,v,w\right)  +%
{\displaystyle\sum\limits_{j=r+1}^{n-1}}
{\displaystyle\sum\limits_{i=j}^{2n-1-j}}
\rho_{2n,2n-i,j}R_{2n,2n-i,j}^{\alpha}\left(  u,v,w\right)  \\
&  & +%
{\displaystyle\sum\limits_{j=r+1}^{n-1}}
{\displaystyle\sum\limits_{i=j}^{2n-1-j}}
\gamma_{2n,2n-i,j}G_{2n,2n-i,j}^{\alpha}\left(  u,v,w\right)  \\
&  & +%
{\displaystyle\sum\limits_{j=r+1}^{n-1}}
{\displaystyle\sum\limits_{i=j}^{2n-1-j}}
\beta_{2n,2n-i,j}B_{2n,2n-i,j}^{\alpha}\left(  u,v,w\right)  ,\forall\left(
u,v,w\right)  \in\Omega^{\alpha}%
\end{array}
\label{induction_hypothesis_for_linear_independence}%
\end{equation}
for all orders $r=0,1,\ldots,n-1$, where the coefficients of all discarded
functions are equal to $0$.

Hereafter, we prove our hypothesis from $r$ to $r+1$ for all orders
$r=0,1,\ldots,n-2$, i.e., we show that equality
(\ref{induction_hypothesis_for_linear_independence}) can be reduced to%
\begin{equation}%
\begin{array}
[c]{ccl}%
0 & = & L_{r+1}\left(  u,v,w\right) \\
& = & \rho_{2n,n,n}R_{2n,n,n}^{\alpha}\left(  u,v,w\right)  +%
{\displaystyle\sum\limits_{j=r+2}^{n-1}}
{\displaystyle\sum\limits_{i=j}^{2n-1-j}}
\rho_{2n,2n-i,j}R_{2n,2n-i,j}^{\alpha}\left(  u,v,w\right) \\
&  & +%
{\displaystyle\sum\limits_{j=r+2}^{n-1}}
{\displaystyle\sum\limits_{i=j}^{2n-1-j}}
\gamma_{2n,2n-i,j}G_{2n,2n-i,j}^{\alpha}\left(  u,v,w\right) \\
&  & +%
{\displaystyle\sum\limits_{j=r+2}^{n-1}}
{\displaystyle\sum\limits_{i=j}^{2n-1-j}}
\beta_{2n,2n-i,j}B_{2n,2n-i,j}^{\alpha}\left(  u,v,w\right)  ,\forall\left(
u,v,w\right)  \in\Omega^{\alpha},
\end{array}
\label{induction_hypothesis_nex_step}%
\end{equation}
by evaluating its $\left(  r+1\right)  $th order partial derivatives under
parametrizations (\ref{parametrization_1_y_0})--(\ref{parametrization_3_y_0})
with proper substitutions of $0$, where the combining constants $\left\{
\rho_{2n,2n-i,r+1},\gamma_{2n,2n-i,r+1},\beta_{2n,2n-i,r+1}\right\}
_{i=r+1}^{2n-1-\left(  r+1\right)  }$ of all the omitted functions are equal
to $0$.

Using parametrization (\ref{parametrization_1_y_0}) and the partial derivative
formula (\ref{derivative_formula_parametrization_1}) with respect to $y$, at
$y=0$ one has that%
\begin{equation}%
\begin{array}
[c]{ccl}%
0 & = & \left.  \dfrac{\partial^{r+1}}{\partial y^{r+1}}L_{r}\left(
u_{1}\left(  x,y\right)  ,v_{1}\left(  x,y\right)  ,w_{1}\left(  x,y\right)
\right)  \right\vert _{y=0}\\
& = &
{\displaystyle\sum\limits_{k=0}^{r+1}}
\dbinom{r+1}{k}\left(  -1\right)  ^{k}\left.  \dfrac{\partial^{r+1}}{\partial
v_{1}^{r+1-k}\partial w_{1}^{k}}L_{r}\left(  u_{1}\left(  x,y\right)
,v_{1}\left(  x,y\right)  ,w_{1}\left(  x,y\right)  \right)  \right\vert
_{y=0},~\forall x\in\left[  0,\alpha\right]  .
\end{array}
\label{y_derivative_L_r}%
\end{equation}
Observe that in case of the $k$th ($k=0,1,\ldots,r+1$) term of the summation
appearing in equality (\ref{y_derivative_L_r}) we can successively write\footnote{Technical details can be found in \cite{tech_report}. From hereon, a number in parenthesis above the equality sign indicates that we apply the corresponding trigonometric identity.}%
\begin{align*}
&  \left.  \frac{\partial^{r+1}}{\partial v_{1}^{r+1-k}\partial w_{1}^{k}%
}L_{r}\left(  u_{1}\left(  x,y\right)  ,v_{1}\left(  x,y\right)  ,w_{1}\left(
x,y\right)  \right)  \right\vert _{y=0}\\
& \\
& \\
= &  \rho_{2n,n,n}\left.  \sin^{n}\frac{u_{1}\left(  x,y\right)  }%
{2}\right\vert _{y=0}\cdot S_{n,\frac{1}{2}}^{r+1-k}\left(  0\right)
\cdot\left.  \frac{\text{d}^{k}}{\text{d}w_{1}^{k}}\sin^{n}\frac{w_{1}\left(
x,y\right)  }{2}\right\vert _{y=0}\\
& \\
&  +\sum_{j=r+1}^{n-1}\sum_{i=j}^{n}\rho_{2n,2n-i,j}\left.  \sin^{2n-i}%
\frac{u_{1}\left(  x,y\right)  }{2}\right\vert _{y=0}\cdot\left.
\frac{\text{d}^{k}}{\text{d}w_{1}^{k}}\sin^{i}\frac{w_{1}\left(  x,y\right)
}{2}\right\vert _{y=0}\cdot M_{i-j,\frac{1}{2},j,\frac{1}{2}}^{r+1-k}\left(
0\right)  \\
&  +\sum_{j=r+1}^{n-1}\sum_{i=n+1}^{2n-1-j}\rho_{2n,2n-i,j}\left.
\frac{\text{d}^{k}}{\text{d}w_{1}^{k}}\sin^{i}\frac{w_{1}\left(  x,y\right)
}{2}\right\vert _{y=0}\cdot\left.  \sin^{2n-i}\frac{u_{1}\left(  x,y\right)
}{2}\right\vert _{y=0}\cdot M_{2n-i-j,\frac{1}{2},j,\frac{1}{2}}%
^{r+1-k}\left(  0\right)  \\
& \\
&  +\sum_{j=r+1}^{n-1}\sum_{i=j}^{n}\gamma_{2n,2n-i,j}\left.  \frac
{\text{d}^{k}}{\text{d}w_{1}^{k}}\sin^{2n-i}\frac{w_{1}\left(  x,y\right)
}{2}\right\vert _{y=0}\cdot S_{i,\frac{1}{2}}^{r+1-k}\left(  0\right)
\cdot\left.  \left(  \cos^{i-j}\frac{u_{1}\left(  x,y\right)  }{2}\sin
^{j}\frac{u_{1}\left(  x,y\right)  }{2}\right)  \right\vert _{y=0}\\
&  +\sum_{j=r+1}^{n-1}\sum_{i=n+1}^{2n-1-j}\gamma_{2n,2n-i,j}S_{i,\frac{1}{2}%
}^{r+1-k}\left(  0\right)  \cdot\left.  \frac{\text{d}^{k}}{\text{d}w_{1}^{k}%
}\sin^{2n-i}\frac{w_{1}\left(  x,y\right)  }{2}\right\vert _{y=0}\cdot\left.
\left(  \cos^{2n-i-j}\frac{u_{1}\left(  x,y\right)  }{2}\sin^{j}\frac
{u_{1}\left(  x,y\right)  }{2}\right)  \right\vert _{y=0}\\
& \\
&  +\sum_{j=r+1}^{n-1}\sum_{i=j}^{n}\beta_{2n,2n-i,j}S_{2n-i,\frac{1}{2}%
}^{r+1-k}\left(  0\right)  \cdot\left.  \sin^{i}\frac{u_{1}\left(  x,y\right)
}{2}\right\vert _{y=0}\cdot\left.  \frac{\text{d}^{k}}{\text{d}w_{1}^{k}%
}\left(  \cos^{i-j}\frac{w_{1}\left(  x,y\right)  }{2}\sin^{j}\frac
{w_{1}\left(  x,y\right)  }{2}\right)  \right\vert _{y=0}\\
&  +\sum_{j=r+1}^{n-1}\sum_{i=n+1}^{2n-1-j}\beta_{2n,2n-i,j}\left.  \sin
^{i}\frac{u_{1}\left(  x,y\right)  }{2}\right\vert _{y=0}\cdot S_{2n-i,\frac
{1}{2}}^{r+1-k}\left(  0\right)  \cdot\left.  \frac{\text{d}^{k}}%
{\text{d}w_{1}^{k}}\left(  \cos^{2n-i-j}\frac{w_{1}\left(  x,y\right)  }%
{2}\sin^{j}\frac{w_{1}\left(  x,y\right)  }{2}\right)  \right\vert _{y=0}\\
& \\
\underset{\text{(\ref{derivative_cosine_sine_at_0})}}{\overset
{\text{(\ref{sign_sin_power_derivative})}}{=}} &  \left\{
\begin{array}
[c]{rl}%
S_{r+1,\frac{1}{2}}^{r+1}\left(  0\right)  \cdot\left(
{\displaystyle\sum\limits_{i=r+1}^{n}}
\rho_{2n,2n-i,r+1}\left.  \sin^{2n-i}\dfrac{u_{1}\left(  x,y\right)  }%
{2}\right\vert _{y=0}\cdot\left.  \sin^{i}\dfrac{w_{1}\left(  x,y\right)  }%
{2}\right\vert _{y=0}\right.   & \\
& \\
+%
{\displaystyle\sum\limits_{i=n+1}^{2n-1-\left(  r+1\right)  }}
\rho_{2n,2n-i,r+1}\left.  \sin^{i}\dfrac{w_{1}\left(  x,y\right)  }%
{2}\right\vert _{y=0}\cdot\left.  \sin^{2n-i}\dfrac{u_{1}\left(  x,y\right)
}{2}\right\vert _{y=0} & \\
& \\
\left.  +\gamma_{2n,2n-\left(  r+1\right)  ,r+1}\left.  \sin^{2n-\left(
r+1\right)  }\dfrac{w_{1}\left(  x,y\right)  }{2}\right\vert _{y=0}%
\cdot\left.  \sin^{r+1}\dfrac{u_{1}\left(  x,y\right)  }{2}\right\vert
_{y=0}\right)  , & k=0\\
& \\
0, & 1\leq k\leq r+1
\end{array}
\right.  \\
& \\
= &  \left\{
\begin{array}
[c]{rl}%
S_{r+1,\frac{1}{2}}^{r+1}\left(  0\right)  \cdot\left(
{\displaystyle\sum\limits_{i=r+1}^{n}}
\rho_{2n,2n-i,r+1}\sin^{2n-i}\dfrac{x}{2}\sin^{i}\dfrac{\alpha-x}{2}\right.
& \\
& \\
+%
{\displaystyle\sum\limits_{i=n+1}^{2n-1-\left(  r+1\right)  }}
\rho_{2n,2n-i,r+1}\sin^{i}\dfrac{\alpha-x}{2}\sin^{2n-i}\dfrac{x}{2} & \\
& \\
\left.  +\gamma_{2n,2n-\left(  r+1\right)  ,r+1}\sin^{2n-\left(  r+1\right)
}\dfrac{\alpha-x}{2}\sin^{r+1}\dfrac{x}{2}\right)  , & k=0,\\
& \\
0, & 1\leq k\leq r+1
\end{array}
\right.  \\
& \\
= &  \left\{
\begin{array}
[c]{rl}%
S_{r+1,\frac{1}{2}}^{r+1}\left(  0\right)  \cdot%
{\displaystyle\sum\limits_{i=r+1}^{2n-1-\left(  r+1\right)  }}
\rho_{2n,2n-i,r+1}^{\prime}A_{2n,2n-i}^{\alpha}\left(  x\right)  , & k=0,\\
& \\
0, & 1\leq k\leq r+1
\end{array}
\right.
\end{align*}
for all values of $x\in\left[  0,\alpha\right]  $, where we have used the
notations%
\[
\rho_{2n,2n-i,r+1}^{\prime}=\left\{
\begin{array}
[c]{rl}%
\dfrac{\rho_{2n,2n-i,r+1}}{c_{2n,2n-i}^{\alpha}}, & i=r+1,r+2,\ldots
,2n-1-\left(  r+1\right)  ,\\
& \\
\dfrac{\gamma_{2n,2n-\left(  r+1\right)  ,r+1}}{c_{2n,2n-\left(  r+1\right)
}^{\alpha}}, & i=2n-\left(  r+1\right)  .
\end{array}
\right.
\]
Therefore, equality (\ref{y_derivative_L_r}) can be reduced to the form%
\[
0=S_{r+1,\frac{1}{2}}^{r+1}\left(  0\right)  \cdot%
{\displaystyle\sum\limits_{i=r+1}^{2n-1-\left(  r+1\right)  }}
\rho_{2n,2n-i,r+1}^{\prime}A_{2n,2n-i}^{\alpha}\left(  x\right)  ,~\forall
x\in\left[  0,\alpha\right]  ,
\]
from which, by using the linear independence of the function system $\left\{
A_{2n,2n-i}^{\alpha}\left(  x\right)  :x\in\left[  0,\alpha\right]  \right\}
_{i=r+1}^{2n-1-\left(  r+1\right)  }$, one obtains that%
\[
\rho_{2n,2n-i,n-1}^{\prime}=0,~i=r+1,r+2,\ldots,2n-\left(  r+1\right)  ,
\]
i.e., equality (\ref{induction_hypothesis_for_linear_independence}) can be
simplified to%
\begin{equation}%
\begin{array}
[c]{ccl}%
0 & = & \widetilde{L}_{r}\left(  u,v,w\right)  \\
& = & \rho_{2n,n,n}R_{2n,n,n}^{\alpha}\left(  u,v,w\right)  +%
{\displaystyle\sum\limits_{j=r+2}^{n-1}}
{\displaystyle\sum\limits_{i=j}^{2n-1-j}}
\rho_{2n,2n-i,j}R_{2n,2n-i,j}^{\alpha}\left(  u,v,w\right)  \\
&  & +%
{\displaystyle\sum\limits_{i=r+2}^{2n-1-\left(  r+1\right)  }}
\gamma_{2n,2n-i,r+1}G_{2n,2n-i,r+1}^{\alpha}\left(  u,v,w\right)  +%
{\displaystyle\sum\limits_{j=r+2}^{n-1}}
{\displaystyle\sum\limits_{i=j}^{2n-1-j}}
\gamma_{2n,2n-i,j}G_{2n,2n-i,j}^{\alpha}\left(  u,v,w\right)  \\
&  & +%
{\displaystyle\sum\limits_{j=r+1}^{n-1}}
{\displaystyle\sum\limits_{i=j}^{2n-1-j}}
\beta_{2n,2n-i,j}B_{2n,2n-i,j}^{\alpha}\left(  u,v,w\right)  ,~\forall\left(
u,v,w\right)  \in\Omega^{\alpha}.
\end{array}
\label{reduced_y_derivative_L_r_tilde}%
\end{equation}

Switching to parametrization (\ref{parametrization_2_x_0}) and evaluating the
$\left(  r+1\right)  $th order partial derivative of equality
(\ref{reduced_y_derivative_L_r_tilde}) with respect to $x$, at $x=0$, we have%
\begin{equation}%
\begin{array}
[c]{ccl}%
0 & = & \left.  \dfrac{\partial^{r+1}}{\partial x^{r+1}}\widetilde{L}%
_{r}\left(  u_{2}\left(  x,y\right)  ,v_{2}\left(  x,y\right)  ,w_{2}\left(
x,y\right)  \right)  \right\vert _{x=0}\\
& = &
{\displaystyle\sum\limits_{k=0}^{r+1}}
\dbinom{r+1}{k}\left(  -1\right)  ^{k}\left.  \dfrac{\partial^{r+1}}{\partial
u_{2}^{r+1-k}\partial v_{2}^{k}}\widetilde{L}_{r}\left(  u_{2}\left(
x,y\right)  ,v_{2}\left(  x,y\right)  ,w_{2}\left(  x,y\right)  \right)
\right\vert _{x=0},~\forall y\in\left[  0,\alpha\right]  ,
\end{array}
\label{x_derivative_L_tilde_r}%
\end{equation}
the $k$th term ($k=0,1,\ldots,r+1$) of which can be expressed as\footnote{Technical details can be found in \cite{tech_report}.}%
\begin{align*}
&  \left.  \frac{\partial^{r+1}}{\partial u_{2}^{r+1-k}\partial v_{2}^{k}%
}\widetilde{L}_{r}\left(  u_{2}\left(  x,y\right)  ,v_{2}\left(  x,y\right)
,w_{2}\left(  x,y\right)  \right)  \right\vert _{x=0}\\
& \\
\underset{\text{(\ref{derivative_cosine_sine_at_0})}}{\overset
{\text{(\ref{sign_sin_power_derivative})}}{=}}  &  \left\{
\begin{array}
[c]{rl}%
S_{r+1,\frac{1}{2}}^{r+1}\left(  0\right)  \cdot\left(
{\displaystyle\sum\limits_{i=r+2}^{n}}
\gamma_{2n,2n-i,r+1}\left.  \sin^{2n-i}\dfrac{w_{2}\left(  x,y\right)  }%
{2}\right\vert _{x=0}\cdot\left.  \sin^{i}\dfrac{v_{2}\left(  x,y\right)  }%
{2}\right\vert _{x=0}\right.  & \\
& \\
+%
{\displaystyle\sum\limits_{i=n+1}^{2n-1-\left(  r+1\right)  }}
\gamma_{2n,2n-i,r+1}\left.  \sin^{i}\dfrac{v_{2}\left(  x,y\right)  }%
{2}\right\vert _{x=0}\cdot\left.  \sin^{2n-i}\dfrac{w_{2}\left(  x,y\right)
}{2}\right\vert _{x=0} & \\
& \\
\left.  +\beta_{2n,2n-\left(  r+1\right)  ,r+1}\left.  \sin^{2n-\left(
r+1\right)  }\dfrac{v_{2}\left(  x,y\right)  }{2}\right\vert _{x=0}%
\cdot\left.  \sin^{r+1}\dfrac{w_{2}\left(  x,y\right)  }{2}\right\vert
_{x=0}\right)  , & k=0,\\
& \\
0, & 1\leq k\leq r+1
\end{array}
\right. \\
& \\
=  &  \left\{
\begin{array}
[c]{rl}%
S_{r+1,\frac{1}{2}}^{r+1}\left(  0\right)  \cdot%
{\displaystyle\sum\limits_{i=r+2}^{2n-\left(  r+1\right)  }}
\gamma_{2n,2n-i,r+1}^{\prime}A_{2n,2n-i}^{\alpha}\left(  y\right)  , & k=0,\\
& \\
0, & 1\leq k\leq r+1
\end{array}
\right.
\end{align*}
for all $y\in\left[  0,\alpha\right]  $, where we have introduced the
notations%
\[
\gamma_{2n,2n-i,r+1}^{\prime}=\left\{
\begin{array}
[c]{rl}%
\dfrac{\gamma_{2n,2n-i,r+1}}{c_{2n,2n-i}^{\alpha}}, & i=r+2,r+3,\ldots
,2n-1-\left(  r+1\right)  ,\\
& \\
\dfrac{\beta_{2n,2n-\left(  r+1\right)  ,r+1}}{c_{2n,2n-\left(  r+1\right)
}^{\alpha}}, & i=2n-\left(  r+1\right)  .
\end{array}
\right.
\]

Thus, equality (\ref{x_derivative_L_tilde_r}) can be simplified to the form%
\[
0=S_{r+1,\frac{1}{2}}^{r+1}\left(  0\right)  \cdot%
{\displaystyle\sum\limits_{i=r+2}^{2n-\left(  r+1\right)  }}
\gamma_{2n,2n-i,r+1}^{\prime}A_{2n,2n-i}^{\alpha}\left(  y\right)  ,~\forall
y\in\left[  0,\alpha\right]  .
\]
Since functions $\left\{  A_{2n,2n-i}^{\alpha}\left(  y\right)  :y\in\left[
0,\alpha\right]  \right\}  _{i=r+2}^{2n-\left(  r+1\right)  }$ are linearly
independent, we obtain that%
\[
\gamma_{2n,2n-i,n-1}^{\prime}=0,~i=r+2,r+3,\ldots,2n-\left(  r+1\right)  ,
\]
therefore equality (\ref{reduced_y_derivative_L_r_tilde}) can be reduced to%
\begin{equation}%
\begin{array}
[c]{ccl}%
0 & = & \widehat{L}_{r}\left(  u,v,w\right)  \\
&  & \\
& = & \rho_{2n,n,n}R_{2n,n,n}^{\alpha}\left(  u,v,w\right)  +%
{\displaystyle\sum\limits_{j=r+2}^{n-1}}
{\displaystyle\sum\limits_{i=j}^{2n-1-j}}
\rho_{2n,2n-i,j}R_{2n,2n-i,j}^{\alpha}\left(  u,v,w\right)  \\
&  & \\
&  & +%
{\displaystyle\sum\limits_{j=r+2}^{n-1}}
{\displaystyle\sum\limits_{i=j}^{2n-1-j}}
\gamma_{2n,2n-i,j}G_{2n,2n-i,j}^{\alpha}\left(  u,v,w\right)  +%
{\displaystyle\sum\limits_{i=r+2}^{2n-1-\left(  r+1\right)  }}
\beta_{2n,2n-i,r+1}B_{2n,2n-i,r+1}^{\alpha}\left(  u,v,w\right)  \\
&  & \\
&  & +%
{\displaystyle\sum\limits_{j=r+2}^{n-1}}
{\displaystyle\sum\limits_{i=j}^{2n-1-j}}
\beta_{2n,2n-i,j}B_{2n,2n-i,j}^{\alpha}\left(  u,v,w\right)  ,~\forall\left(
u,v,w\right)  \in\Omega^{\alpha}.
\end{array}
\label{reduced_x_derivative_L_r_hat}%
\end{equation}
Finally, we apply parametrization (\ref{parametrization_3_y_0}) and evaluate
the $\left(  r+1\right)  $th order partial derivative of equality
(\ref{reduced_x_derivative_L_r_hat}) with respect to $y$, at $y=0$. Using the
derivative formula (\ref{derivative_formula_parametrization_3}), we have that%
\begin{equation}%
\begin{array}
[c]{ccl}%
0 & = & \left.  \dfrac{\partial^{r+1}}{\partial y^{r+1}}\widehat{L}_{r}\left(
u_{3}\left(  x,y\right)  ,v_{3}\left(  x,y\right)  ,w_{3}\left(  x,y\right)
\right)  \right\vert _{y=0}\\
& = &
{\displaystyle\sum\limits_{k=0}^{r+1}}
\dbinom{r+1}{k}\left(  -1\right)  ^{k}\left.  \dfrac{\partial^{r+1}}{\partial
w_{3}^{r+1-k}\partial u_{3}^{k}}\widehat{L}_{r}\left(  u_{3}\left(
x,y\right)  ,v_{3}\left(  x,y\right)  ,w_{3}\left(  x,y\right)  \right)
\right\vert _{y=0},~\forall x\in\left[  0\,,\alpha\right]  ,
\end{array}
\label{y_derivative_L_hat_n_minus_2}%
\end{equation}
where the $k$th term ($k=0,1,\ldots,r+1$) takes the form\footnote{Technical details can be found in \cite{tech_report}.}%
\begin{align*}
&  \left.  \dfrac{\partial^{r+1}}{\partial w_{3}^{r+1-k}\partial u_{3}^{k}%
}\widehat{L}_{r}\left(  u_{3}\left(  x,y\right)  ,v_{3}\left(  x,y\right)
,w_{3}\left(  x,y\right)  \right)  \right\vert _{y=0}\\
& \\
= &  \left\{
\begin{array}
[c]{rl}%
S_{r+1,\frac{1}{2}}^{r+1}\left(  0\right)  \cdot%
{\displaystyle\sum\limits_{i=r+2}^{2n-1-\left(  r+1\right)  }}
\beta_{2n,2n-i,r+1}^{\prime}A_{2n,2n-i}^{\alpha}\left(  x\right)  , & k=0,\\
& \\
0, & 1\leq k\leq r+1
\end{array}
\right.
\end{align*}
for all $x\in\left[  0,\alpha\right]  $, where we have used the notations%
\[
\beta_{2n,2n-i,r+1}^{\prime}=\frac{\beta_{2n,2n-i,r+1}}{c_{2n,2n-i}^{\alpha}%
},~i=r+2,r+3,\ldots,2n-1-\left(  r+1\right)  .
\]
Therefore, equality (\ref{reduced_x_derivative_L_r_hat}) can be reduced to%
\[
0=S_{r+1,\frac{1}{2}}^{r+1}\left(  0\right)  \cdot%
{\displaystyle\sum\limits_{i=r+2}^{2n-1-\left(  r+1\right)  }}
\beta_{2n,2n-i,r+1}^{\prime}A_{2n,2n-i}^{\alpha}\left(  x\right)  ,~\forall
x\in\left[  0,\alpha\right]  ,
\]
which implies that%
\[
\beta_{2n,2n-i,r+1}^{\prime}=0,~i=r+2,r+3,\ldots,2n-1-\left(  r+1\right)  .
\]

Summarizing all calculations and zero constants obtained above, by evaluating
the $\left(  r+1\right)  $th order ($r=0,1,\ldots,n-2$) partial derivatives
under successive parametrizations (\ref{parametrization_1_y_0}%
)--(\ref{parametrization_3_y_0}) of the initial equality
(\ref{induction_hypothesis_for_linear_independence}), we conclude that
equality (\ref{induction_hypothesis_nex_step}) is also valid, i.e., the
induction hypothesis (\ref{induction_hypothesis_for_linear_independence}) is
correct for all orders $r=0,1,\ldots,n-2$.

The special case $r=n-2$ implies that equality%
\begin{equation}%
\begin{array}
[c]{ccl}%
0 & = & L_{n-2}\left(  u,v,w\right) \\
& = & \rho_{2n,n,n}R_{2n,n,n}^{\alpha}\left(  u,v,w\right)  +%
{\displaystyle\sum\limits_{i=n-1}^{n}}
\rho_{2n,2n-i,n-1}R_{2n,2n-i,n-1}^{\alpha}\left(  u,v,w\right) \\
&  & +%
{\displaystyle\sum\limits_{i=n-1}^{n}}
\gamma_{2n,2n-i,n-1}G_{2n,2n-i,n-1}^{\alpha}\left(  u,v,w\right) \\
&  & +%
{\displaystyle\sum\limits_{i=n-1}^{n}}
\beta_{2n,2n-i,n-1}B_{2n,2n-i,n-1}^{\alpha}\left(  u,v,w\right)
,~\forall\left(  u,v,w\right)  \in\Omega^{\alpha}%
\end{array}
\label{induction_hypothesis_for_linear_independence_n_minus_2}%
\end{equation}
can be simplified to the form%
\begin{equation}
0=L_{n-1}\left(  u,v,w\right)  =\rho_{2n,n,n}R_{2n,n,n}^{\alpha}\left(
u,v,w\right)  ,~\forall\left(  u,v,w\right)  \in\Omega^{\alpha},
\label{induction_hypothesis_for_linear_independence_n_minus_1}%
\end{equation}
where the combining constants of all discarded functions proved to be $0$.
Moreover, equality
(\ref{induction_hypothesis_for_linear_independence_n_minus_1}) holds if and
only if%
\[
\rho_{2n,n,n}=0,
\]
which means that all combining constants appearing in the initial vanishing
linear combination (\ref{linear_combination_union_null_equality}) have to be
zero, i.e., the joint constrained trivariate function system
(\ref{united_system}) is linearly independent.%

\begin{figure}
[!htb]
\begin{center}
\includegraphics[
height=6.4135in,
width=6.4437in
]%
{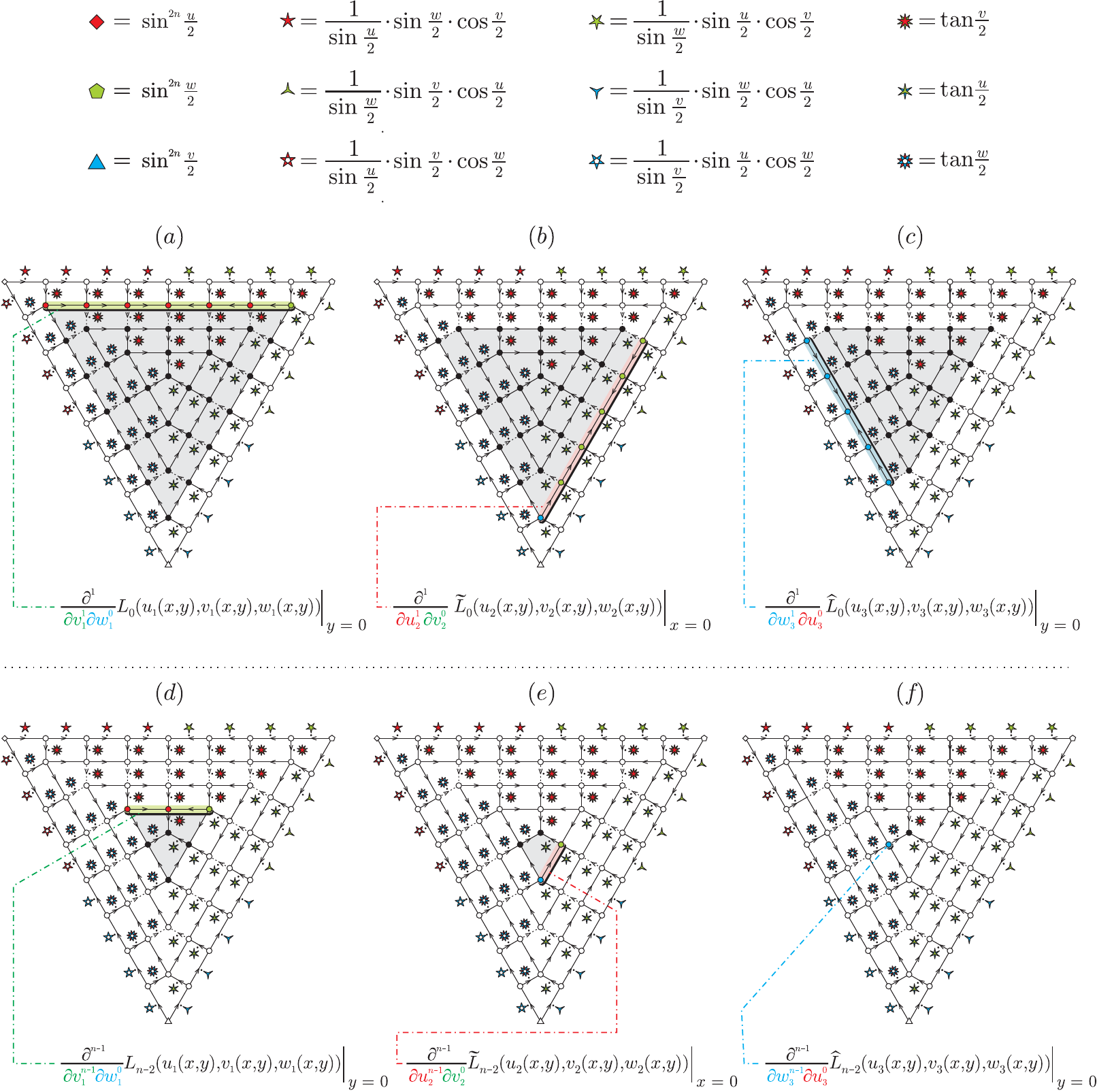}%
\caption{Cases (\emph{a})--(\emph{c}) and (\emph{d})--(\emph{f}) illustrate
three consecutive steps performed in order to gradually simplify equality
(\ref{induction_hypothesis_for_linear_independence}) to its final form
(\ref{induction_hypothesis_nex_step}) via intermediate equalities
(\ref{reduced_y_derivative_L_r_tilde}) and (\ref{reduced_x_derivative_L_r_hat}%
) for $r=0$ and $r=n-2$, respectively. In each step of cases (\emph{a}%
)--(\emph{c}) (resp. (\emph{d})--(\emph{f})), black dots represent functions
whose first (resp. $\left(  n-1\right)  $th) order partial derivatives vanish
when one evaluates the first (resp. $\left(  n-1\right)  $th) order partial
derivatives of equalities (\ref{induction_hypothesis_for_linear_independence}%
), (\ref{reduced_y_derivative_L_r_tilde}) and
(\ref{reduced_x_derivative_L_r_hat}) under parametrizations
(\ref{parametrization_1_y_0}), (\ref{parametrization_2_x_0}) and
(\ref{parametrization_3_y_0}) with respect to variables $y$, $x$ and $y$, at
$y=0$, $x=0$ and $y=0$, respectively. Red, green and blue colored dots
represent functions the first (resp. $\left(  n-1\right)  $th) order partial
derivatives of which do not vanish under corresponding parametrizations and
substitutions with $0$. White dots correspond to functions the coefficients of
which proved to be $0$ in the previous steps. The gradually shrinking gray
shaded areas correspond to functions which appear in the first (resp. $\left(
n-1\right)  $th) order partial derivatives of equalities
(\ref{induction_hypothesis_for_linear_independence}),
(\ref{reduced_y_derivative_L_r_tilde}) and (\ref{reduced_x_derivative_L_r_hat}%
). (For interpretation of the references to color in this figure legend, the
reader is referred to the web version of this paper.)}%
\label{fig:linear_independence_j_1__j_n_minus_1}%
\end{center}
\end{figure}
Cases (\emph{a})--(\emph{c}) and (\emph{d})--(\emph{f}) of Fig.\ \ref{fig:linear_independence_j_1__j_n_minus_1} represent calculations that
correspond to the evaluation of the first and $\left(  n-1\right)  $th order
partial derivatives of equalities
\[
0=L_{0}\left(  u,v,w\right)  ,~\forall\left(  u,v,w\right)  \in\Omega^{\alpha}%
\]
and%
\[
0=L_{n-2}\left(  u,v,w\right)  ,~\forall\left(  u,v,w\right)  \in
\Omega^{\alpha}%
\]
under successive parametrizations (\ref{parametrization_1_y_0}%
)--(\ref{parametrization_3_y_0}), respectively.\qedhere
\end{proof}

Next we introduce an equivalence relation by means of which one can
recursively construct the linearly independent function system $V_{n}^{\alpha
}$ that generates the constrained trivariate extension $\mathcal{V}%
_{n}^{\alpha}$ ($n\in%
\mathbb{N}
$) of the vector space (\ref{truncated_Fourier_vector_space}).

\section{\label{sec:coincidence}Coincidence of vector spaces $\mathcal{T}%
_{2n}^{\alpha}$ and $\mathcal{V}_{n}^{\alpha}$}

Usually, the coincidence of some vector spaces is proved by means of (inverse) basis transformations (i.e., non-singular matrices) between the underlying vector spaces. In our case this approach proved to be very complicated and for the present we could describe these basis transformations only for orders one and two. In order to treat this problem in general, we used an alternative mathematical tool based on equivalence classes generated by the following equivalence relation.

\begin{definition}
[\textbf{Equivalence relation and equivalence classes}]%
\label{equivalence_relation_definition}Let $n\geq1$ and $\left(  u,v,w\right)
\in\Omega^{\alpha}$ be fixed parameters and consider the set%
\[
E_{n}^{\alpha}=\left\{  ru+gv+bw:r,g,b\in%
\mathbb{N}
,~0\leq r,g,b\leq n\right\}  .
\]
We say that combinations $r_{1}u+g_{1}v+b_{1}w\in E_{n}^{\alpha}$ and
$r_{2}u+g_{2}v+b_{2}w\in E_{n}^{\alpha}$ are equivalent, i.e.,
\[
r_{1}u+g_{1}v+b_{1}w\sim r_{2}u+g_{2}v+b_{2}w,
\]
if and only if there exists an integer $z\in\left\{  -n,-\left(  n-1\right)
,\ldots,0,1,\ldots,2n\right\}  $ for which one of the conditions%
\begin{equation}
\left\{
\begin{array}
[c]{rcl}%
r_{2}+r_{1} & = & z,\\
g_{2}+g_{1} & = & z,\\
b_{2}+b_{1} & = & z
\end{array}
\right.  \label{equivalence_conditions_1}%
\end{equation}
and%
\begin{equation}
\left\{
\begin{array}
[c]{rcl}%
r_{2}-r_{1} & = & z,\\
g_{2}-g_{1} & = & z,\\
b_{2}-b_{1} & = & z
\end{array}
\right.  \label{equivalence_conditions_2}%
\end{equation}
is fulfilled. Furthermore, let us denote by%
\[
E_{n}^{\alpha}/_{\sim}=\left\{  \left[  ru+gv+bw\right]  :ru+gv+bw\in
E_{n}^{\alpha}\right\}
\]
the set of all possible equivalence classes of $E_{n}^{\alpha}$ by $\sim$.
\end{definition}

Definition \ref{equivalence_relation_definition} is motivated by the following
reason. If, for example, one wants to check the relationship between constrained
trivariate functions $\cos\left(  r_{1}u+g_{1}v+b_{1}w\right)  $, $\sin\left(
r_{1}u+g_{1}v+b_{1}w\right)  $ and $\cos\left(  r_{2}u+g_{2}v+b_{2}w\right)
$, where $\left(  u,v,w\right)  \in\Omega^{\alpha}$ and combinations
$r_{1}u+g_{1}v+b_{1}w,r_{2}u+g_{2}v+b_{2}w\in E_{n}^{\alpha}$ are equivalent
by $\sim$, then exists an integer $z\in\left\{  -n,-\left(  n-1\right)
,\ldots,0,1,\ldots,2n\right\}  $ for which one of the equalities%
\begin{align*}
\cos\left(  r_{2}u+g_{2}v+b_{2}w\right)  =  &  \cos\left(  \left(  z\mp
r_{1}\right)  u+\left(  z\mp g_{1}\right)  v+\left(  z\mp b_{1}\right)
w\right) \\
=  &  \cos\left(  z\left(  u+v+w\right)  \mp\left(  r_{1}u+g_{1}%
v+b_{1}w\right)  \right) \\
=  &  \cos\left(  z\alpha\mp\left(  r_{1}u+g_{1}v+b_{1}w\right)  \right) \\
=  &  \cos\left(  za\right)  \cdot\cos\left(  r_{1}u+g_{1}v+b_{1}w\right)
\pm\sin\left(  za\right)  \cdot\sin\left(  r_{1}u+g_{1}v+b_{1}w\right)
\end{align*}
holds for all $\left(  u,v,w\right)  \in\Omega^{\alpha}$, i.e., function
$\cos\left(  r_{2}u+g_{2}v+b_{2}w\right)  $ differs from $\cos\left(
r_{1}u+g_{1}v+b_{1}w\right)  $ only in a phase change and it can be expressed
as a linear combination of $\cos\left(  r_{1}u+g_{1}v+b_{1}w\right)  $ and
$\sin\left(  r_{1}u+g_{1}v+b_{1}w\right)  $, hence functions $\cos\left(
r_{1}u+g_{1}v+b_{1}w\right)  $ and $\cos\left(  r_{2}u+g_{2}v+b_{2}w\right)  $
can be considered equivalent (up to a phase change), formally this means that
both selected combinations belong to the same equivalence class, i.e.,
$r_{2}u+g_{2}v+b_{2}w\in\left[  r_{1}u+g_{1}v+b_{1}w\right]  $. E.g. if $n=3$,
then
\[
0\cdot u+0\cdot v+1\cdot w\sim1\cdot u+1\cdot v+0\cdot w
\]
and
\[
1\cdot u+3\cdot v+2\cdot w\sim2\cdot u+0\cdot v+1\cdot w\sim0\cdot u+2\cdot
v+1\cdot w,
\]
but
\[
1\cdot u+0\cdot v+0\cdot w\nsim0\cdot u+1\cdot v+0\cdot w.
\]
Naturally, the equivalence relation $\sim$ introduced in Definition
\ref{equivalence_relation_definition} is also able to check the linear
dependence of constrained trivariate functions $\cos\left(  r_{1}%
u+g_{1}v+b_{1}w\right)  $, $\sin\left(  r_{1}u+g_{1}v+b_{1}w\right)  $ and
$\sin\left(  r_{2}u+g_{2}v+b_{2}w\right)  $ defined over $\Omega^{\alpha}$. If
$r_{1}u+g_{1}v+b_{1}w\sim r_{2}u+g_{2}v+b_{2}w$, then exists an integer $z$
such that one of the equalities%
\begin{align*}
\sin\left(  r_{2}u+g_{2}v+b_{2}w\right)   &  =\sin\left(  z\alpha\mp\left(
r_{1}u+g_{1}v+b_{1}w\right)  \right) \\
&  =\sin\left(  za\right)  \cdot\cos\left(  r_{1}u+g_{1}v+b_{1}w\right)
\mp\cos\left(  z\alpha\right)  \cdot\sin\left(  r_{1}u+g_{1}v+b_{1}w\right)
\end{align*}
holds for all $\left(  u,v,w\right)  \in\Omega^{\alpha}$, i.e., functions
$\sin\left(  r_{1}u+g_{1}v+b_{1}w\right)  $ and $\sin\left(  r_{2}%
u+g_{2}v+b_{2}w\right)  $ can be considered equivalent (up to a phase change)
over $\Omega^{\alpha}$.

In what follows, we will recursively construct the constrained trivariate
extension $\mathcal{V}_{n}^{\alpha}$ ($n\in%
\mathbb{N}
$) of the vector space (\ref{truncated_Fourier_vector_space}). Let
\[
E_{0}^{\alpha}=\left\{  0\cdot u+0\cdot v+0\cdot w:\left(  u,v,w\right)
\in\Omega^{\alpha}\right\}
\]
and $V_{0}^{\alpha}$ be the system of those non-vanishing constrained
trivariate cosine and sine functions which are determined by the quotient set
$E_{0}^{\alpha}/_{\sim}$ of different equivalence classes, i.e.,
\[
V_{0}^{\alpha}=\left\{  1:\left(  u,v,w\right)  \in\Omega^{\alpha}\right\}  .
\]
Define the vector space $\mathcal{V}_{0}^{\alpha}$ as span~$V_{0}^{\alpha}$.
In order to determine the vector space $\mathcal{V}_{1}^{\alpha}$ complete the
function system $V_{0}^{\alpha}$ with those constrained trivariate cosine and
sine functions defined over $\Omega^{\alpha}$ the arguments of which are
representatives of different equivalence classes%
\[
\left[  ru+gv+bw\right]  \in E_{1}^{\alpha}/_{\sim}\setminus E_{0}^{\alpha
}/_{\sim},
\]
where at least one of the coefficients $r,g,b$ is equal to $1$. Since $1\cdot
u+1\cdot v+1\cdot w\sim0\cdot u+0\cdot v+0\cdot w\in\left[  0\right]  \in
E_{0}^{\alpha}/_{\sim}$, $u+v\sim w$, $u+w\sim v$ and $v+w\sim u$, it is easy
to observe that $\mathcal{V}_{1}^{\alpha}=~$span~$V_{1}^{\alpha}$, where%
\[
V_{1}^{\alpha}=V_{0}^{\alpha}\cup\left\{  \cos\left(  u\right)  ,\sin\left(
u\right)  ,\cos\left(  v\right)  ,\sin\left(  v\right)  ,\cos\left(  w\right)
,\sin\left(  w\right)  :\left(  u,v,w\right)  \in\Omega^{\alpha}\right\}  .
\]

Continuing this process, the vector space $\mathcal{V}_{2}^{\alpha}$ can be
obtained by completing the function system $V_{1}^{\alpha}$ with those
constrained trivariate cosine and sine functions defined over $\Omega^{\alpha
}$ the arguments of which are representatives of different equivalence classes%
\[
\left[  ru+gv+bw\right]  \in E_{2}^{\alpha}/_{\sim}\setminus\left(
E_{0}^{\alpha}/_{\sim}\cup E_{1}^{\alpha}/_{\sim}\right)  ,
\]
where at least one of the coefficients $r,g,b$ is equal to $2$. Assume that
$b=2$. If $r=g=2$, then $\left[  2\cdot u+2\cdot v+2\cdot w\right]  =\left[
0\cdot u+0\cdot v+0\cdot w\right]  =\left[  0\right]  \in E_{0}^{\alpha
}/_{\sim}$. If $0<r\leq g\leq2$, i.e., if $r=1,g\in\left\{  1,2\right\}  $,
then let $z=\min\left\{  r,g\right\}  =1$ and observe that%
\begin{align*}
&  ru+gv+2w\\
=  &  \left(  r-z\right)  u+\left(  g-z\right)  v+\left(  2-z\right)
w+z\left(  u+v+w\right) \\
=  &  \left(  r-z\right)  u+\left(  g-z\right)  v+\left(  2-z\right)
w+z\alpha\\
=  &  \left(  g-z\right)  v+w+z\alpha,
\end{align*}
from which one obtains that
\[
u+gv+2w\in\left[  \left(  g-z\right)  v+w\right]  =\left\{
\begin{array}
[c]{rc}%
\left[  w\right]  \in E_{1}^{\alpha}/_{\sim,} & g=1,\\
\left[  v+w\right]  =\left[  u\right]  \in E_{1}^{\alpha}/_{\sim,} & g=2.
\end{array}
\right.
\]
If $r=0$ and $g\in\left\{  0,1,2\right\}  $, then $\left[  gv+2w\right]  \in
E_{2}^{\alpha}/_{\sim}\setminus\left(  E_{0}^{\alpha}/_{\sim}\cup
E_{1}^{\alpha}/_{\sim}\right)  $. Thus, equality $b=2$ generates only three
acceptable equivalence classes, namely $\left\{  \left[  gv+2w\right]
\right\}  _{g=0}^{2}$, from which $\left[  2v+2w\right]  $ must be ignored,
because this equivalence class will reappear as $\left[  2u\right]  \in
E_{2}^{\alpha}/_{\sim}\setminus\left(  E_{0}^{\alpha}/_{\sim}\cup
E_{1}^{\alpha}/_{\sim}\right)  $, when we start the characterization detailed
above with fixed equality $r=2$. By cyclic symmetry, we can conclude that in
case of $n=2$ only equivalence classes%
\begin{align*}
&  \left\{  \left[  0\cdot u+g\cdot v+2\cdot w\right]  \right\}  _{g=0}^{1}\\
&  \left\{  \left[  2\cdot u+0\cdot v+b\cdot w\right]  \right\}  _{b=0}^{1},\\
&  \left\{  \left[  r\cdot u+2\cdot v+0\cdot w\right]  \right\}  _{r=0}^{1},
\end{align*}
generate new linearly independent constrained trivariate cosine and sine functions over
$\Omega^{\alpha}$, i.e., $\mathcal{V}_{2}^{\alpha}=~$span~$V_{2}^{\alpha}$,
where%
\begin{align*}
V_{2}^{\alpha}=  &  V_{1}^{\alpha}\cup\left\{  \cos\left(  2w\right)
,\sin\left(  2w\right)  ,\cos\left(  v+2w\right)  ,\sin\left(  v+2w\right)
:\left(  u,v,w\right)  \in\Omega^{\alpha}\right\} \\
&  \cup\left\{  \cos\left(  2u\right)  ,\sin\left(  2u\right)  ,\cos\left(
2u+w\right)  ,\sin\left(  2u+w\right)  :\left(  u,v,w\right)  \in
\Omega^{\alpha}\right\} \\
&  \cup\left\{  \cos\left(  2v\right)  ,\sin\left(  2v\right)  ,\cos\left(
u+2v\right)  ,\sin\left(  u+2v\right)  :\left(  u,v,w\right)  \in
\Omega^{\alpha}\right\}  .
\end{align*}

Continuing this recursive method, in a similar way as above, one obtains the
following result.

\begin{proposition}
[\textbf{Recursive construction of the vector space $\mathcal{V}_{n}^{\alpha}$}]
\label{iterative_construction_of_trivariate_extension}The basis
$V_{n}^{\alpha}$ of the constrained trivariate extension $\mathcal{V}%
_{n}^{\alpha}=%
\operatorname{span}%
V_{n}^{\alpha}$ of the vector space (\ref{truncated_Fourier_vector_space}) of
order $n\geq0$ fulfills the recurrence property%
\[
V_{n}^{\alpha}=V_{n-1}^{\alpha}\cup\left\{  \cos\left(  ru+gv+bw\right)
,\sin\left(  ru+gv+bw\right)  :\left(  u,v,w\right)  \in\Omega^{\alpha
},~\left[  ru+gv+bw\right]  \in E_{n}^{\alpha}/_{\sim}\setminus\left(
\cup_{i=1}^{n-1}E_{i}^{\alpha}/_{\sim}\right)  \right\}
\]
for all $n\geq1$, where from each equivalence class we choose a single
representant,%
\[
E_{n}^{\alpha}/_{\sim}\setminus\left(  \cup_{i=1}^{n-1}E_{i}^{\alpha}/_{\sim
}\right)  =\left\{  \left[  0\cdot u+g\cdot v+n\cdot w\right]  \right\}
_{g=0}^{n-1}\cup\left\{  \left[  n\cdot u+0\cdot v+b\cdot w\right]  \right\}
_{b=0}^{n-1}\cup\left\{  \left[  r\cdot u+n\cdot v+0\cdot w\right]  \right\}
_{r=0}^{n-1},
\]
and $V_{0}^{\alpha}=\left\{  1:\left(  u,v,w\right)  \in\Omega^{\alpha
}\right\}  $, i.e., if $n\geq1$ the function system $V_{n}^{\alpha}$ can be
obtained by completing $V_{n-1}^{\alpha}$ with $6n$ new linearly independent
constrained trivariate functions defined over the domain $\Omega^{\alpha}$.
\end{proposition}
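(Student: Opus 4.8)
The plan is to reduce the entire statement to a single integer invariant of the equivalence classes, deduce the combinatorial enumeration from it, and then obtain the linear independence from the linear independence of planar trigonometric modes.

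First I would reformulate the relation $\sim$ geometrically. Writing each admissible combination as a lattice point $(r,g,b)\in\{0,1,\dots,n\}^{3}$, conditions \eqref{equivalence_conditions_1} and \eqref{equivalence_conditions_2} say exactly that $(r_{2},g_{2},b_{2})$ arises from $(r_{1},g_{1},b_{1})$ either by adding a constant vector $(z,z,z)$ or by the reflection $(r,g,b)\mapsto(z-r,z-g,z-b)$. Moreover $z$ is automatically forced into $\{-n,\dots,2n\}$, since it is a coordinatewise sum or difference of numbers in $[0,n]$, so the range restriction imposes nothing and $\sim$ is a single relation restricted to the nested sets $E_{i}^{\alpha}$. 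Both admissible moves preserve the spread
\[
\sigma(r,g,b)=\max\{r,g,b\}-\min\{r,g,b\},
\]
so $\sigma$ is an invariant of every equivalence class. The key observation is that a class lies in $E_{i}^{\alpha}/_{\sim}$ precisely when it has a representative with $\max-\min\le i$ (translate the minimum to $0$), hence the smallest such index equals $\sigma$. Therefore $E_{n}^{\alpha}/_{\sim}\setminus\bigl(\cup_{i=1}^{n-1}E_{i}^{\alpha}/_{\sim}\bigr)$ is exactly the set of classes of spread $n$.

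Next I would enumerate those classes. In the translation invariant coordinates $\varphi=(r-g,\,g-b)$, two points are $\sim$-equivalent if and only if their $\varphi$-values agree up to sign, and $\sigma=\max\{|r-g|,|g-b|,|b-r|\}=\max\{|\varphi_{1}|,|\varphi_{2}|,|\varphi_{1}+\varphi_{2}|\}$. Thus the spread-$n$ classes correspond to the lattice points on the boundary of the hexagon $\{|\varphi_{1}|\le n,\ |\varphi_{2}|\le n,\ |\varphi_{1}+\varphi_{2}|\le n\}$, taken modulo $\varphi\mapsto-\varphi$; a direct count gives $6n$ boundary points, none fixed by the central symmetry for $n\ge1$, hence $3n$ classes. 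I would then identify the three displayed families $[\,gv+nw\,]$, $[\,nu+bw\,]$ and $[\,ru+nv\,]$ with the three pairs of opposite edges $b-r=n$, $r-g=n$ and $g-b=n$, each of which forces two coordinates to $0$ and $n$ and leaves the third free. Since every listed representative visibly has spread $n$ and there are $3n$ of them, once they are shown pairwise inequivalent the counting step $1+\sum_{i=1}^{n}6i=3n(n+1)+1=\delta_{n}$ forces them to be a complete and irredundant system, producing a cosine and a sine per class, i.e. $6n$ new functions.

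The step I expect to be the main obstacle is the corner bookkeeping in this enumeration. The three full edges $\{(n,0,b)\}_{b=0}^{n}$, $\{(r,n,0)\}_{r=0}^{n}$ and $\{(0,g,n)\}_{g=0}^{n}$ overlap through exactly three reflected corner identifications, namely $(n,0,n)\sim(0,n,0)$, $(n,n,0)\sim(0,0,n)$ and $(0,n,n)\sim(n,0,0)$, which is precisely why the statement stops each index at $n-1$; isolating these three identifications and only these, and then checking that the remaining $3n$ representatives are pairwise $\sim$-inequivalent, is the one genuinely delicate computation, while everything else is dictated by the spread invariant. Finally, for the linear independence I would pass to the plane: substituting $w=\alpha-u-v$ turns each generator into $\cos$ or $\sin$ of $(r-b)u+(g-b)v+b\alpha$, a planar mode of frequency $(r-b,\,g-b)$, and $\varphi\mapsto(r-b,g-b)$ is a linear isomorphism intertwining $\varphi\mapsto-\varphi$ with $\omega\mapsto-\omega$. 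Choosing one representative per class thus selects, after expanding into complex exponentials, pairwise distinct frequency vectors in $\mathbb{Z}^{2}$; since exponentials with distinct frequencies are linearly independent on any planar set with nonempty interior (the projection of $\Omega^{\alpha}$), the whole system $V_{n}^{\alpha}$, and in particular the $6n$ functions adjoined to $V_{n-1}^{\alpha}$, is linearly independent, while the phase-change identities recorded before the proposition show that same-class functions would add nothing.
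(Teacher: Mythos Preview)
Your argument is correct and considerably more systematic than what the paper offers. The paper does not actually prove this proposition: it works out the cases $n=0,1,2$ by hand, inspecting which combinations $ru+gv+bw$ collapse via the constraint $u+v+w=\alpha$, and then asserts that ``continuing this recursive method, in a similar way as above'' one obtains the stated recurrence and the $3n$ new classes. Your route is genuinely different: you extract the spread $\sigma(r,g,b)=\max-\min$ as a complete $\sim$-invariant via the translation coordinates $\varphi=(r-g,g-b)$, identify the new classes at level $n$ with the boundary of a hexagon in the $\varphi$-plane modulo central symmetry, and obtain the $3n$ count and the explicit representatives by a clean lattice-point enumeration. What this buys you over the paper's approach is an actual proof rather than a pattern: the invariant $\sigma$ makes the set difference $E_n^\alpha/_{\sim}\setminus\bigl(\cup_i E_i^\alpha/_{\sim}\bigr)$ transparent, the corner bookkeeping (your flagged obstacle) becomes a short check in $\varphi$-coordinates that only the three reflected corners $(n,0,n)\sim(0,n,0)$, $(n,n,0)\sim(0,0,n)$, $(0,n,n)\sim(n,0,0)$ cause overlaps, and your planar-exponential argument for linear independence is both self-contained and stronger than anything the paper supplies at this point (the paper defers linear independence and dimension counting to the separate machinery of $\mathcal{T}_{2n}^\alpha$). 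One cosmetic remark: your sentence invoking $\delta_n$ reads as though the equality $1+\sum 6i=\delta_n$ is being used as an input, but your hexagon count already gives exactly $3n$ spread-$n$ classes, so completeness follows from pigeonhole alone and the appearance of $\delta_n$ is just a consistency check.
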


\begin{corollary}
[\textbf{Dimension of the vector space $\mathcal{V}_{n}^{\alpha}$}]\label{dimension}%
Based on Proposition \ref{iterative_construction_of_trivariate_extension}, one
obtains the recurrence relation%
\[
\dim\mathcal{V}_{n}^{\alpha}=\dim\mathcal{V}_{n-1}^{\alpha}+6n,~n\geq1
\]
with initial condition $\dim\mathcal{V}_{0}^{\alpha}=1$. Thus,
\[
\dim\mathcal{V}_{n}^{\alpha}=1+6\sum_{i=1}^{n}i=3n\left(  n+1\right)
+1=\delta_{n} = \dim\mathcal{T}_{2n}^{\alpha}.
\]

\end{corollary}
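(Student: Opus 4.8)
The plan is to read off the recurrence directly from Proposition~\ref{iterative_construction_of_trivariate_extension} and then solve it by elementary summation, matching the closed form against the dimension already computed in Theorem~\ref{joint_systems_linear_independence}. First I would invoke the Proposition, which asserts that $V_n^\alpha$ is obtained from $V_{n-1}^\alpha$ by adjoining exactly $6n$ new constrained trivariate cosine and sine functions and that the resulting system is linearly independent. Since $V_{n-1}^\alpha$ is by construction a basis of $\mathcal{V}_{n-1}^\alpha$, adjoining $6n$ functions that are linearly independent of one another and of $V_{n-1}^\alpha$ raises the cardinality of the spanning set by precisely $6n$ without creating any linear relation. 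Hence $\dim\mathcal{V}_n^\alpha=\lvert V_n^\alpha\rvert=\lvert V_{n-1}^\alpha\rvert+6n=\dim\mathcal{V}_{n-1}^\alpha+6n$ for every $n\geq 1$, which is the claimed recurrence, with base case $\dim\mathcal{V}_0^\alpha=\lvert V_0^\alpha\rvert=1$ coming from $V_0^\alpha=\{1\}$.

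Next I would solve the recurrence by telescoping. Writing
\[
\dim\mathcal{V}_n^\alpha=\dim\mathcal{V}_0^\alpha+\sum_{k=1}^{n}\left(\dim\mathcal{V}_k^\alpha-\dim\mathcal{V}_{k-1}^\alpha\right)=1+\sum_{k=1}^{n}6k,
\]
and applying the Gauss summation $\sum_{k=1}^{n}k=\frac{n(n+1)}{2}$, one obtains $\dim\mathcal{V}_n^\alpha=1+6\cdot\frac{n(n+1)}{2}=3n(n+1)+1=\delta_n$. Finally, Theorem~\ref{joint_systems_linear_independence} already yields $\dim\mathcal{T}_{2n}^\alpha=\delta_n$, so the two dimensions coincide, establishing the last equality $\dim\mathcal{V}_n^\alpha=\dim\mathcal{T}_{2n}^\alpha$.

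The bulk of the difficulty has in fact been discharged upstream, in Proposition~\ref{iterative_construction_of_trivariate_extension} (that exactly $6n$ genuinely new equivalence classes appear at level $n$ and that the associated functions are independent) and in Theorem~\ref{joint_systems_linear_independence} (that $\mathcal{T}_{2n}^\alpha$ has dimension $\delta_n$). For the Corollary itself the only point requiring care is that the $6n$ increments at successive levels do not silently recount functions already present; this is guaranteed by the clause $E_n^\alpha/_{\sim}\setminus(\cup_{i=1}^{n-1}E_i^\alpha/_{\sim})$ in the Proposition, which records only equivalence classes not encountered at any lower level. Granting that, the remaining steps are the routine arithmetic above, so I do not expect a genuine obstacle: the Corollary is essentially a bookkeeping consequence of the two preceding results.
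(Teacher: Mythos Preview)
Your proposal is correct and matches the paper's approach: the paper in fact provides no separate proof for this corollary, treating the recurrence and its solution as immediate consequences of Proposition~\ref{iterative_construction_of_trivariate_extension} together with the dimension count $\delta_n$ from Theorem~\ref{joint_systems_linear_independence}. Your write-up simply spells out the telescoping and Gauss summation that the paper leaves implicit.
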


\begin{theorem}
[\textbf{Coincidence of vector spaces $\mathcal{T}_{2n}^{\alpha}$ and $\mathcal{V}%
_{n}^{\alpha}$}]The system $T_{2n}^{\alpha}$ of constrained trivariate
functions forms a basis of the vector space $\mathcal{V}_{n}^{\alpha}$, i.e.,%
\[
\mathcal{T}_{2n}^{\alpha}=%
\operatorname{span}%
T_{2n}^{\alpha}=%
\operatorname{span}%
V_{n}^{\alpha}=\mathcal{V}_{n}^{\alpha}\text{.}%
\]

\end{theorem}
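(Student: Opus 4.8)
The plan is to reduce the asserted set equality to a single inclusion together with a dimension count. By Theorem \ref{joint_systems_linear_independence} the system $T_{2n}^{\alpha}$ is linearly independent and consists of exactly $\delta_n$ elements, so $\dim\mathcal{T}_{2n}^{\alpha}=\delta_n$; by Corollary \ref{dimension} we also have $\dim\mathcal{V}_n^{\alpha}=\delta_n$. It therefore suffices to prove the inclusion $\mathcal{T}_{2n}^{\alpha}\subseteq\mathcal{V}_n^{\alpha}$, since a subspace of $\mathcal{V}_n^{\alpha}$ of the same finite dimension must be all of it. I would deliberately avoid the reverse inclusion, as that is precisely the explicit inverse basis transformation the authors flag as unknown; the direction $\mathcal{T}\subseteq\mathcal{V}$ only requires writing each generator of $T_{2n}^{\alpha}$ as \emph{some} linear combination of functions in $V_n^{\alpha}$, with nothing to invert.

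To establish the inclusion I would show that every generator of $T_{2n}^{\alpha}$ — a monomial in the six half-angle factors $\sin\frac{u}{2},\cos\frac{u}{2},\dots,\sin\frac{w}{2},\cos\frac{w}{2}$ — lies in $\mathcal{V}_n^{\alpha}$. Writing each factor in complex-exponential form and multiplying out expresses such a generator as a real combination of terms $\cos\frac{pu+qv+sw}{2}$ and $\sin\frac{pu+qv+sw}{2}$, where $(p,q,s)$ range over integer triples bounded by the respective exponents and congruent to them modulo $2$. On $\Omega^{\alpha}$ the constraint $u+v+w=\alpha$ lets me eliminate $w$: the total half-angle degree forces $p-s$ and $q-s$ to be even, so each term becomes a cosine or sine of $r'u+g'v+\tfrac{s\alpha}{2}$ with integer effective frequencies $r'=\tfrac{p-s}{2}$ and $g'=\tfrac{q-s}{2}$. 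Expanding the constant phase $\tfrac{s\alpha}{2}$ and invoking the phase-change computation following Definition \ref{equivalence_relation_definition} then reduces matters to showing that $\cos(r'u+g'v)$ and $\sin(r'u+g'v)$ belong to $\mathcal{V}_n^{\alpha}$.

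The heart of the argument is a frequency bound: I must verify that $(r',g')$ is always representable as $(r-b,\,g-b)$ for some $(r,g,b)\in\{0,\dots,n\}^3$, since those are exactly the planar frequencies carried by $V_n^{\alpha}$ (up to the $\pm$ sign freedom encoded in the two branches of the equivalence relation). A short analysis of the region $\{(r-b,g-b):0\le r,g,b\le n\}$ shows representability is equivalent to $|r'|\le n$, $|g'|\le n$, plus $|r'|+|g'|\le n$ whenever $r'$ and $g'$ have opposite signs. Tracking the three types of factors separately — the pure-$u$ factor supplies a term $a$ to $r'$, the pure-$v$ factor a term $d$ to $g'$, and the shared factor $\sin^{\bullet}\frac{w}{2}=\sin^{\bullet}\frac{\alpha-u-v}{2}$ contributes the \emph{same} amount $\tfrac{k}{2}$ to both — one gets $r'=a+\tfrac{k}{2}$ and $g'=d+\tfrac{k}{2}$. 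In the opposite-sign case the shared $\tfrac{k}{2}$ cancels, leaving $|r'|+|g'|=|a-d|$, which is bounded by half the combined degree of the pure-$u$ and pure-$v$ factors and hence by $n$. This cancellation of the common $w$-contribution is exactly what keeps every mode inside the representable region.

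I would check this bound explicitly only for the $R$-type generators (in both index ranges); the $G$- and $B$-type generators follow from the cyclic symmetry recorded in Section \ref{sec:RGB}, which merely permutes $u,v,w$ and correspondingly the two retained frequencies. I expect the main obstacle to be precisely the bookkeeping of the joint frequency content in the opposite-sign case, where a naive $\max(|r'|,|g'|)\le n$ estimate is too weak; once the cancellation above supplies the sharper $\ell^1$-type bound, the inclusion $\mathcal{T}_{2n}^{\alpha}\subseteq\mathcal{V}_n^{\alpha}$, and with it the coincidence $\mathcal{T}_{2n}^{\alpha}=\mathcal{V}_n^{\alpha}$, follows at once from the matching dimensions.
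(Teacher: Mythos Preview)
Your proposal is correct and follows essentially the same route as the paper: prove only the inclusion $\mathcal{T}_{2n}^{\alpha}\subseteq\mathcal{V}_n^{\alpha}$ by expanding each half-angle monomial into integer-frequency trigonometric terms, verify that all resulting frequencies stay within the admissible range, and conclude by the dimension count from Theorem~\ref{joint_systems_linear_independence} and Corollary~\ref{dimension}. The paper carries this out via the power-reduction and product-to-sum identities with a parity case split on $(i,j)$ and then checks the frequency bound by explicit maximisation over the index sets, whereas your complex-exponential expansion and the geometric description of the representable region $\{(r-b,g-b):0\le r,g,b\le n\}$ give a more uniform packaging of the same computation; in particular, your observation that the shared $w$-contribution cancels in $r'-g'$ is exactly the mechanism behind the paper's case-by-case maxima.
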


\begin{proof}
First of all, we prove that functions of the system (\ref{united_system}) are
elements of $%
\operatorname{span}%
V_{n}^{\alpha}$. Due to symmetry properties of functions (\ref{R_system}),
(\ref{G_system}) and (\ref{B_system}), it is sufficient to show that for all
indices $j=0,1,\ldots,n$ and $i=j,j+1,\ldots,n$ the function $R_{2n,2n-i,j}%
^{\alpha}\left(  u,v,w\right)  =\sin^{2n-i}\frac{u}{2}\sin^{i}\frac{w}{2}%
\cos^{i-j}\frac{v}{2}\sin^{j}\frac{v}{2}\ $belongs to $%
\operatorname{span}%
V_{n}^{\alpha}$. By means of trigonometric identities%
\begin{align}
\cos^{m}\frac{t}{2} &  =\left\{
\begin{array}
[c]{rl}%
\dfrac{1}{2^{m-1}}%
{\displaystyle\sum\limits_{k=0}^{\frac{m-1}{2}}}
\dbinom{m}{k}\cos\left(  \dfrac{1}{2}\left(  m-2k\right)  t\right)  , &
m\equiv1\left(  \operatorname{mod}2\right)  ,\\
& \\
\dfrac{1}{2^{m}}\dbinom{m}{\frac{m}{2}}+\dfrac{1}{2^{m-1}}%
{\displaystyle\sum\limits_{k=0}^{\frac{m}{2}-1}}
\dbinom{m}{k}\cos\left(  \dfrac{1}{2}\left(  m-2k\right)  t\right)  , &
m\equiv0\left(  \operatorname{mod}2\right)  ,
\end{array}
\right.  \\
& \nonumber\\
\sin^{m}\frac{t}{2} &  =\left\{
\begin{array}
[c]{rl}%
\dfrac{1}{2^{m-1}}%
{\displaystyle\sum\limits_{k=0}^{\frac{m-1}{2}}}
\left(  -1\right)  ^{\frac{m-1}{2}-k}\dbinom{m}{k}\sin\left(  \dfrac{1}%
{2}\left(  m-2k\right)  t\right)  , & m\equiv1\left(  \operatorname{mod}%
2\right)  ,\\
& \\
\dfrac{1}{2^{m}}\dbinom{m}{\frac{m}{2}}+\dfrac{1}{2^{m-1}}%
{\displaystyle\sum\limits_{k=0}^{\frac{m}{2}-1}}
\left(  -1\right)  ^{\frac{m}{2}-k}\dbinom{m}{k}\cos\left(  \dfrac{1}%
{2}\left(  m-2k\right)  t\right)  , & m\equiv0\left(  \operatorname{mod}%
2\right)  ,
\end{array}
\right.
\end{align}
($m\in%
\mathbb{N}
$, $t\in%
\mathbb{R}
$) and by the parity of powers $2n-i$, $i$, $i-j$ and $j$, one can easily see
that function $R_{2n,2n-i,j}^{\alpha}\left(  u,v,w\right)  $ can be written as
the sum of products of the type%
\begin{equation}%
\left\{
\begin{array}
[c]{rl}%
\cos\left(  \frac{2n-i-2k_{1}}{2}u\right)  \cos\left(  \frac{i-2k_{2}}%
{2}w\right)  \cos\left(  \frac{i-j-2k_{3}}{2}v\right)  \cos\left(
\frac{j-2k_{4}}{2}v\right)  , & i\equiv0\left(  \operatorname{mod}2\right)
\text{ and }j\equiv0\left(  \operatorname{mod}2\right)  ,\\
& \\
\sin\left(  \frac{2n-i-2k_{1}}{2}u\right)  \sin\left(  \frac{i-2k_{2}}%
{2}w\right)  \cos\left(  \frac{i-j-2k_{3}}{2}v\right)  \sin\left(
\frac{j-2k_{4}}{2}v\right)  , & i\equiv1\left(  \operatorname{mod}2\right)
\text{ and }j\equiv1\left(  \operatorname{mod}2\right)  ,\\
& \\
\cos\left(  \frac{2n-i-2k_{1}}{2}u\right)  \cos\left(  \frac{i-2k_{2}}%
{2}w\right)  \cos\left(  \frac{i-j-2k_{3}}{2}v\right)  \sin\left(
\frac{j-2k_{4}}{2}v\right)  , & i\equiv0\left(  \operatorname{mod}2\right)
\text{ and }j\equiv1\left(  \operatorname{mod}2\right)  ,\\
& \\
\sin\left(  \frac{2n-i-2k_{1}}{2}u\right)  \sin\left(  \frac{i-2k_{2}}%
{2}w\right)  \cos\left(  \frac{i-j-2k_{3}}{2}v\right)  \cos\left(
\frac{j-2k_{4}}{2}v\right)  , & i\equiv1\left(  \operatorname{mod}2\right)
\text{ and }j\equiv0\left(  \operatorname{mod}2\right)  ,
\end{array}
\right.
\label{possible_products_inclusion}%
\end{equation}
where $\left(  k_{1},k_{2},k_{3},k_{4}\right)  \in K_{i\left(
\operatorname{mod}2\right)  ,j\left(  \operatorname{mod}2\right)  }$ such that%
\begin{align*}
K_{0,0} &  =\left\{  0,1,\ldots,\frac{2n-i}{2}\right\}  \times\left\{
0,1,\ldots,\frac{i}{2}\right\}  \times\left\{  0,1,\ldots,\frac{i-j}%
{2}\right\}  \times\left\{  0,1,\ldots,\frac{j}{2}\right\}  ,\\
K_{1,1} &  =\left\{  0,1,\ldots,\frac{2n-i-1}{2}\right\}  \times\left\{
0,1,\ldots,\frac{i-1}{2}\right\}  \times\left\{  0,1,\ldots,\frac{i-j}%
{2}\right\}  \times\left\{  0,1,\ldots,\frac{j-1}{2}\right\}  ,\\
K_{0,1} &  =\left\{  0,1,\ldots,\frac{2n-i}{2}\right\}  \times\left\{
0,1,\ldots,\frac{i}{2}\right\}  \times\left\{  0,1,\ldots,\frac{i-j-1}%
{2}\right\}  \times\left\{  0,1,\ldots,\frac{j-1}{2}\right\}  ,\\
K_{1,0} &  =\left\{  0,1,\ldots,\frac{2n-i-1}{2}\right\}  \times\left\{
0,1,\ldots,\frac{i-1}{2}\right\}  \times\left\{  0,1,\ldots,\frac{i-j}%
{2}\right\}  \times\left\{  0,1,\ldots,\frac{j}{2}\right\}  .
\end{align*}
Observe that mixed constrained products of the type
(\ref{possible_products_inclusion}) can be expressed by means of the
equivalence relation $\sim$ as linear combinations of functions either of the
type $\cos\left(  r_{1}u+g_{1}v+b_{1}w\right)  $ or $\sin\left(  r_{2}%
u+g_{2}v+b_{2}w\right)  $, where $\left[  r_{1},g_{1},b_{1}\right]  ,\left[
r_{2},g_{2},b_{2}\right]  \in\cup_{i=1}^{n}E_{i}^{\alpha}/_{\sim}$, i.e.,
\[
\cos\left(  r_{1}u+g_{1}v+b_{1}w\right)  ,\sin\left(  r_{2}u+g_{2}%
v+b_{2}w\right)  \in V_{n}^{\alpha}.
\]
For instance, in case of $i\equiv1\left(  \operatorname{mod}2\right)  $ and
$j\equiv1\left(  \operatorname{mod}2\right)  $, by means of trigonometric
identities%
\begin{align}
\cos x\cos y &  =\frac{1}{2}\left(  \cos\left(  x-y\right)  +\cos\left(
x+y\right)  \right)  ,\\
\sin x\sin y &  =\frac{1}{2}\left(  \cos\left(  x-y\right)  -\cos\left(
x+y\right)  \right)  ,\\
\sin x\cos y &  =\frac{1}{2}\left(  \sin\left(  x+y\right)  +\sin\left(
x-y\right)  \right)
\end{align}
($x,y\in%
\mathbb{R}
$), each of the eight terms of the expansion of the product%
\[
\sin\left(  \dfrac{2n-i-2k_{1}}{2}u\right)  \sin\left(  \dfrac{i-2k_{2}}%
{2}w\right)  \cos\left(  \dfrac{i-j-2k_{3}}{2}v\right)  \sin\left(
\dfrac{j-2k_{4}}{2}v\right)
\]
has the form%
\[
\sin\left(  s_{1}\frac{2n-i-2k_{1}}{2}u+\left(  s_{2}\frac{i-j-2k_{3}}%
{2}+s_{3}\frac{j-2k_{4}}{2}\right)  v+s_{4}\frac{i-2k_{2}}{2}w\right)  ,
\]
where $s_{1}=1$ and $s_{2},s_{3},s_{4}\in\left\{  -1,+1\right\}  $, i.e.,
$s_{r}^{2}=1$, $r=1,2,3,4$. Using the constraint $u+v+w=\alpha$ one can
successively write that%
\begin{align*}
&  \sin\left(  s_{1}\frac{2n-i-2k_{1}}{2}u+\left(  s_{2}\frac{i-j-2k_{3}}%
{2}+s_{3}\frac{j-2k_{4}}{2}\right)  v+s_{4}\frac{i-2k_{2}}{2}w\right)  \\
&  =\sin\left(  s_{1}\frac{2n-i-2k_{1}}{2}u+\left(  s_{2}\frac{i-j-2k_{3}}%
{2}+s_{3}\frac{j-2k_{4}}{2}\right)  v+s_{4}\frac{i-2k_{2}}{2}w-\frac{\alpha
}{2}+\frac{\alpha}{2}\right)  \\
&  =\cos\frac{\alpha}{2}\sin\left(  s_{1}\frac{2n-i-2k_{1}}{2}u+\left(
s_{2}\frac{i-j-2k_{3}}{2}+s_{3}\frac{j-2k_{4}}{2}\right)  v+s_{4}%
\frac{i-2k_{2}}{2}w-\frac{\alpha}{2}\right)  \\
&  +\sin\frac{\alpha}{2}\cos\left(  s_{1}\frac{2n-i-2k_{1}}{2}u+\left(
s_{2}\frac{i-j-2k_{3}}{2}+s_{3}\frac{j-2k_{4}}{2}\right)  v+s_{4}%
\frac{i-2k_{2}}{2}w-\frac{\alpha}{2}\right)  \\
&  =\cos\frac{\alpha}{2}\sin\left(  s_{1}\frac{2n-i-1-2k_{1}}{2}u+\left(
s_{2}\frac{i-j-2k_{3}}{2}+s_{3}\frac{j-2k_{4}-s_{3}}{2}\right)  v+s_{4}%
\frac{i-2k_{2}-s_{4}}{2}w\right)  \\
&  +\sin\frac{\alpha}{2}\cos\left(  s_{1}\frac{2n-i-1-2k_{1}}{2}u+\left(
s_{2}\frac{i-j-2k_{3}}{2}+s_{3}\frac{j-2k_{4}-s_{3}}{2}\right)  v+s_{4}%
\frac{i-2k_{2}-s_{4}}{2}w\right)  ,
\end{align*}
where for all $4$-tuples $\left(  k_{1},k_{2},k_{3},k_{4}\right)  \in K_{1,1}$
the coefficients of variables $u$, $v$ and $w$ are integer numbers and both of
the sine and cosine functions belong to the equivalence class%
\[
\left[  \underset{\in%
\mathbb{N}
}{\underbrace{s_{1}\frac{2n-i-1-2k_{1}}{2}-z}},\underset{\in%
\mathbb{N}
}{\underbrace{s_{2}\frac{i-j-2k_{3}}{2}+s_{3}\frac{j-2k_{4}-s_{3}}{2}-z}%
},\underset{\in%
\mathbb{N}
}{\underbrace{s_{4}\frac{i-2k_{2}-s_{4}}{2}-z}}\right]  ,
\]
where%
\[
z=\min\left\{  s_{1}\frac{2n-i-1-2k_{1}}{2},s_{2}\frac{i-j-2k_{3}}{2}%
+s_{3}\frac{j-2k_{4}-s_{3}}{2},s_{4}\frac{i-2k_{2}-s_{4}}{2}\right\}  .
\]
Parameter $z$ can take three possible values. For example, if%
\[
z=s_{2}\frac{i-j-2k_{3}}{2}+s_{3}\frac{j-2k_{4}-s_{3}}{2}%
\]
one has that%
\begin{align*}
&  \max\left\{  s_{1}\frac{2n-i-1-2k_{1}}{2}-s_{2}\frac{i-j-2k_{3}}{2}%
-s_{3}\frac{j-2k_{4}-s_{3}}{2}:\left(  k_{1},k_{2},k_{3},k_{4}\right)  \in
K_{1,1},~s_{1}=1,s_{2},s_{3}\in\left\{  -1,+1\right\}  \right\}  \\
= &  \frac{2n-i-1}{2}+\frac{i-j}{2}+\frac{j+1}{2}\\
= &  n
\end{align*}
and%
\begin{align*}
&  \max\left\{  s_{4}\frac{i-2k_{2}-s_{4}}{2}-s_{2}\frac{i-j-2k_{3}}{2}%
-s_{3}\frac{j-2k_{4}-s_{3}}{2}:\left(  k_{1},k_{2},k_{3},k_{4}\right)  \in
K_{1,1},~s_{1}=1,s_{2},s_{3}\in\left\{  -1,+1\right\}  \right\}  \\
&  =\frac{i-1}{2}+\frac{i-j}{2}+\frac{j+1}{2}\\
&  =i\leq n.
\end{align*}
The remaining two cases of $z$ can be treated analogously.

Performing similar calculations as above, one concludes that all types of
products from (\ref{possible_products_inclusion}) can be expanded into linear
combinations of sine and cosine functions that belong to some equivalence
classes $\left[  r,g,b\right]  \in$ $\cup_{i=1}^{n}E_{i}^{\alpha}/_{\sim}$.
Consequently,
\[%
\operatorname{span}%
T_{2n}^{\alpha}\subseteq%
\operatorname{span}%
V_{n}^{\alpha}\text{.}%
\]

Finally, arguing by contradiction, we assume that there exists a constrained
trivariate function $\sigma:\Omega^{\alpha}\rightarrow%
\mathbb{R}
$ such that $\sigma\in%
\operatorname{span}%
V_{n}^{\alpha}\setminus%
\operatorname{span}%
T_{2n}^{\alpha}$. Then, using Theorem \ref{joint_systems_linear_independence}
and Corollary \ref{dimension}, we obtain the contradiction
\[
1+\delta_{n}=\dim%
\operatorname{span}%
\left\{  \sigma,%
\operatorname{span}%
T_{2n}^{\alpha}\right\}  \leq\dim%
\operatorname{span}%
V_{n}^{\alpha}=\delta_{n},
\]
therefore $\mathcal{T}_{2n}^{\alpha}=%
\operatorname{span}%
T_{2n}^{\alpha}=%
\operatorname{span}%
V_{n}^{\alpha}=\mathcal{V}_{n}^{\alpha}$.\qedhere
\end{proof}

The next section provides a method to normalize the constrained trivariate
basis $T_{2n}^{\alpha}$.

\section{\label{sec:partition_of_unity}Partition of unity}

Since the function system (\ref{united_system}) is a basis of the constrained trivariate
extension of the vector space (\ref{truncated_Fourier_vector_space}) that also
contains the constant function $1$, it follows that there exist unique
coefficients%
\begin{equation}
r_{2n,n,n}^{\alpha},\left\{  r_{2n,2n-i,j}^{\alpha}\right\}  _{j=0,i=j}%
^{n-1,2n-1-j},\left\{  g_{2n,2n-i,j}^{\alpha}\right\}  _{j=0,i=j}%
^{n-1,2n-1-j},\left\{  b_{2n,2n-i,j}^{\alpha}\right\}  _{j=0,i=j}^{n-1,2n-1-j}
\label{normalizing_weights}%
\end{equation}
such that%
\begin{align*}
1  &  =L\left(  u,v,w\right) \\
&  =r_{2n,n,n}^{\alpha}R_{2n,n,n}^{\alpha}\left(  u,v,w\right)  +\sum
_{j=0}^{n-1}\sum_{i=j}^{2n-1-j}r_{2n,2n-i,j}^{\alpha}R_{2n,2n-i,j}^{\alpha
}\left(  u,v,w\right) \\
&  +\sum_{j=0}^{n-1}\sum_{i=j}^{2n-1-j}g_{2n,2n-i,j}^{\alpha}G_{2n,2n-i,j}%
^{\alpha}\left(  u,v,w\right)  +\sum_{j=0}^{n-1}\sum_{i=j}^{2n-1-j}%
b_{2n,2n-i,j}^{\alpha}B_{2n,2n-i,j}^{\alpha}\left(  u,v,w\right)
\end{align*}
for all $\left(  u,v,w\right)  \in\Omega^{\alpha}$. Due to the symmetry
properties of the joint function system (\ref{united_system}), coefficients
(\ref{normalizing_weights}) have to fulfill the symmetry conditions%
\begin{align}
r_{2n,2n-i,j}^{\alpha}  &  =r_{2n,i,j}^{\alpha},~j=0,1,\ldots
,n-1,~i=j+1,j+2,\ldots,n-1,\label{symmetry_condition_1}\\
g_{2n,2n-i,j}^{\alpha}  &  =r_{2n,2n-i,j}^{\alpha},~j=0,1,\ldots
,n-1,~i=j,j+1,\ldots,2n-1-j,\label{symmetry_condition_2}\\
b_{2n,2n-i,j}^{\alpha}  &  =r_{2n,2n-i,j}^{\alpha},~j=0,1,\ldots
,n-1,~i=j,j+1,,\ldots,2n-1-j, \label{symmetry_condition_3}%
\end{align}
where constants $\left\{  r_{2n,2n-i,j}^{\alpha}\right\}  _{j=0,i=j}^{n,n}$
are unknown parameters at the moment.

For the present we are not able to give a closed form of normalizing
coefficients (\ref{normalizing_weights}) for arbitrary order $n\geq1$,
however we propose an efficient technique using which one can reduce their
determination to the solution of several lower triangular linear systems.
Normalizing constants (\ref{normalizing_weights}) can be determined by solving
the system of equations%
\begin{equation}
\left.  \frac{\partial^{r}}{\partial y^{r}}L\left(  u\left(  x,y\right)
,v\left(  x,y\right)  ,w\left(  x,y\right)  \right)  \right\vert _{y=0}%
=\delta_{r,0},~\forall x\in\left[  0,\alpha\right]  ,~\forall y\in\left[
0,\alpha-x\right]  ,~\forall r=0,1,\ldots,n\label{system_partition_of_unity}%
\end{equation}
under parametrization (\ref{parametrization_for_partition_of_unity}), where
$\delta_{r,0}$ denotes the Kronecker delta. (Naturally, one may solve the
system (\ref{system_partition_of_unity}) by using a different parametrization,
however symbolic calculations proved to be much shorter and easier under
parametrization (\ref{parametrization_for_partition_of_unity}).)

\begin{remark}
[\textbf{Normalizing coefficients of level $0$}]\label{normalizing_coeff_of_level_0}Due
to the boundary property (\ref{boundary_property_1}) it follows that%
\[
r_{2n,2n-i,0}^{\alpha}=c_{2n,2n-i}^{\alpha}=c_{2n,i}^{\alpha},~\forall
i=0,1,\ldots,n.
\]

\end{remark}

For each order $r=1,2,\ldots,n$, after evaluating the $r$th order mixed
partial derivatives with respect to $v\left(  x,y\right)  =y$ and $w\left(
x,y\right)  =x-y$ at $y=0$ and dividing both sides of
(\ref{system_partition_of_unity}) by $\cos^{2n}\frac{x}{2}\neq0$ ($x\in\left[
0,\alpha\right]  $) , equality%
\[
\sum_{k=0}^{r}\dbinom{r}{k}\left(  -1\right)  ^{k}\left.  \frac{\partial^{r}%
}{\partial v^{r-k}\partial w^{k}}L\left(  u\left(  x,y\right)  ,v\left(
x,y\right)  ,w\left(  x,y\right)  \right)  \right\vert _{y=0}=0,~\forall
x\in\left[  0,\alpha\right]  ,~y\in\left[  0,\alpha-x\right]
\]
can be rewritten into a polynomial expression of $\tan\frac{x}{2}$, since the
sum of powers of trigonometric functions $\sin\frac{\alpha-x}{2}$, $\cos
\frac{\alpha-x}{2}$, $\sin\frac{x}{2}$ and $\cos\frac{x}{2}$ appearing as
potential factors in the terms of the left hand side always equals $2n$.

As an example consider the first order partial derivative of the function
system (\ref{united_system}) with respect to $y$, at $y=0$ under
parametrization (\ref{parametrization_for_partition_of_unity}). Then, we have
the next proposition the rather lengthy and technical proof of which can be found in \cite{tech_report}.

\begin{proposition}
[\textbf{Normalizing coefficients of level $1$}]\label{prop:level_1_coefficients}For
arbitrary order $n\geq2$, normalizing constants $\left\{  r_{2n,2n-i,1}%
^{\alpha}\right\}  _{i=1}^{n}$ are given by%
\begin{equation}
r_{2n,2n-i,1}^{\alpha}=\left\{
\begin{array}
[c]{ll}%
\dfrac{2}{\sin\frac{\alpha}{2}}c_{2n,2n-2}^{\alpha}, & i=1,\\
& \\
\dfrac{i}{\sin\frac{\alpha}{2}}c_{2n,2n-i}^{\alpha}\cos\frac{\alpha}{2}%
+\dfrac{i+1}{\sin\frac{\alpha}{2}}c_{2n,2n-\left(  i+1\right)  }^{\alpha}, &
i=2,3,\ldots,n.
\end{array}
\right.  \label{level_1_coefficients}%
\end{equation}

\end{proposition}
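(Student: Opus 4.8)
The plan is to read off the level-$1$ coefficients directly from the defining system (\ref{system_partition_of_unity}) at order $r=1$, rather than through an explicit basis transformation. First I would specialize the chain rule (\ref{chain_rule}) to $r=1$ under the parametrization (\ref{parametrization_for_partition_of_unity}), so that the requirement $\left.\partial_y L\right|_{y=0}=0$ becomes $\left.(\partial_v-\partial_w)L\right|_{y=0}=0$ for all $x\in[0,\alpha]$, evaluated at $u=\alpha-x$, $v=0$, $w=x$. Using Lemma \ref{lem:sign_cos_power_derivative} and Proposition \ref{derivative_cosine_sine} (equivalently, a direct inspection of the factor $\sin^{\bullet}\tfrac{v}{2}$), I would isolate which members of (\ref{united_system}) survive: the $\partial_v$-part keeps only functions carrying $\sin\tfrac{v}{2}$ to the first power, namely all level-$1$ functions $R_{2n,2n-i,1}^\alpha$ ($i=1,\dots,2n-2$) together with the three exceptions $G_{2n,2n-1,0}^\alpha$, $G_{2n,2n-1,1}^\alpha$ and $B_{2n,1,0}^\alpha$; the $\partial_w$-part keeps only functions free of $\sin\tfrac{v}{2}$, namely the level-$0$ functions $R_{2n,2n-i,0}^\alpha$ and $G_{2n,2n,0}^\alpha$.

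Next I would substitute the known level-$0$ values $r_{2n,2n-i,0}^\alpha=c_{2n,2n-i}^\alpha$ (Remark \ref{normalizing_coeff_of_level_0}) and collapse the $G$- and $B$-coefficients onto $R$-coefficients via the symmetry relations (\ref{symmetry_condition_1})--(\ref{symmetry_condition_3}). Writing $a=\tfrac{x}{2}$ and $\beta=\tfrac{\alpha-x}{2}$, each surviving $\partial_v$-term is (up to the factor $\tfrac12$) either $\sin^{2n-i}\beta\,\sin^i a=A_{2n,i}^\alpha(x)/c_{2n,i}^\alpha$, or one of the two ``edge'' terms $c_{2n,1}^\alpha\sin^{2n-1}a\,\cos\beta$ and $c_{2n,1}^\alpha\sin^{2n-1}\beta\,\cos a$, while each $\partial_w$-term has the shape $i\,c_{2n,i}^\alpha\sin^{2n-i}\beta\,\sin^{i-1}a\,\cos a$. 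The whole equation is thus a trigonometric identity in $x$, homogeneous of degree $2n$ in $\{\sin,\cos\}\tfrac{x}{2}$.

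The crux is to push every term into the univariate B-basis $\{A_{2n,i}^\alpha\}_{i=0}^{2n}$ of (\ref{univariate_basis}). Writing $\cos a=\cos(\tfrac{\alpha}{2}-\beta)$ and $\cos\beta=\cos(\tfrac{\alpha}{2}-a)$ and solving the resulting $2\times2$ relation yields, for $1\le i\le 2n$,
\[
\sin^{2n-i}\tfrac{\alpha-x}{2}\,\sin^{i-1}\tfrac{x}{2}\,\cos\tfrac{x}{2}
=\frac{1}{\sin\frac{\alpha}{2}}\left(\cos\tfrac{\alpha}{2}\,\frac{A_{2n,i}^\alpha(x)}{c_{2n,i}^\alpha}+\frac{A_{2n,i-1}^\alpha(x)}{c_{2n,i-1}^\alpha}\right),
\]
and an analogous expansion shows the two edge terms land only on $A_{2n,0}^\alpha$, $A_{2n,1}^\alpha$, $A_{2n,2n-1}^\alpha$, $A_{2n,2n}^\alpha$. (This is the concrete form of the ``polynomial in $\tan\tfrac{x}{2}$'' reduction, obtained after dividing by $\cos^{2n}\tfrac{x}{2}$.) Since $\{A_{2n,i}^\alpha\}$ is linearly independent, I would equate coefficients of each $A_{2n,i}^\alpha$. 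For $2\le i\le 2n-2$ this returns $r_{2n,2n-i,1}^\alpha=\tfrac{i}{\sin(\alpha/2)}c_{2n,i}^\alpha\cos\tfrac{\alpha}{2}+\tfrac{i+1}{\sin(\alpha/2)}c_{2n,i+1}^\alpha$, which for $2\le i\le n$ is exactly (\ref{level_1_coefficients}) after using $c_{2n,i}^\alpha=c_{2n,2n-i}^\alpha$ (the values for $n<i\le 2n-2$ being redundant by (\ref{symmetry_condition_1})). The index $i=1$ is special precisely because the edge terms contribute the extra summand $\tfrac{\cos(\alpha/2)}{\sin(\alpha/2)}c_{2n,1}^\alpha$ to the $A_{2n,1}^\alpha$-coefficient, cancelling the $i=1$ instance of the first summand and leaving $r_{2n,2n-1,1}^\alpha=\tfrac{2}{\sin(\alpha/2)}c_{2n,2}^\alpha=\tfrac{2}{\sin(\alpha/2)}c_{2n,2n-2}^\alpha$.

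The main obstacle is bookkeeping rather than a single hard idea: correctly enumerating the surviving functions (the exceptional $G$- and $B$-terms are easy to overlook) and carrying out the trigonometric reduction without sign errors. A built-in check is that the system is over-determined --- the coefficients of $A_{2n,0}^\alpha$, $A_{2n,2n-1}^\alpha$, $A_{2n,2n}^\alpha$ produce equations that are either free of unknowns or that repeat $r_{2n,2n-1,1}^\alpha$; their consistency reduces to the identity $c_{2n,2}^\alpha=(n-1)\cos\tfrac{\alpha}{2}\,c_{2n,1}^\alpha+n\,c_{2n,0}^\alpha$ among the S\'anchez-Reyes constants, so no contradiction arises and the remaining equations (one per coefficient) are triangular and uniquely solvable, giving (\ref{level_1_coefficients}).
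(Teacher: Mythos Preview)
Your proposal is correct and follows essentially the same route as the paper: specialize the defining system (\ref{system_partition_of_unity}) at $r=1$ under parametrization (\ref{parametrization_for_partition_of_unity}), enumerate the surviving terms via the $\sin^{\bullet}\tfrac{v}{2}$ factor (including the exceptional $G_{2n,2n-1,0}^{\alpha}$, $G_{2n,2n-1,1}^{\alpha}$, $B_{2n,1,0}^{\alpha}$ contributions), substitute the known level-$0$ coefficients, and equate coefficients in a basis of the $(2n+1)$-dimensional trigonometric space. The only cosmetic difference is that the paper (in the cited technical report) divides through by $\cos^{2n}\tfrac{x}{2}$ and matches coefficients of the polynomial in $\tan\tfrac{x}{2}$, whereas you match coefficients directly in the B-basis $\{A_{2n,i}^{\alpha}\}$; the two comparisons are equivalent, and your choice arguably makes the appearance of the unknowns $r_{2n,2n-i,1}^{\alpha}$ slightly more transparent.
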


Continuing the technique presented in the proof of Proposition
\ref{prop:level_1_coefficients} (cf. \cite{tech_report}) with the evaluation of the $r$th order ($2\leq
r\leq n$) partial derivatives appearing in the system
(\ref{system_partition_of_unity}), one is able to calculate the closed form of
normalizing constants $\left\{  r_{2n,2n-i,j}^{\alpha}\right\}  _{j=0,i=j}%
^{n,n}$. Examples \ref{normalizing_coefficients_n_1}--\ref{normalizing_coefficients_n_3}
provide closed formulas of these normalizing coefficients for $n=1,~2$
and $3$, respectively.

\begin{example}
[\textbf{Normalizing constants for $n=1$}]\label{normalizing_coefficients_n_1}In case
of $n=1$, the unique solution of the system (\ref{system_partition_of_unity})
is%
\begin{align*}
r_{2,2,0}^{\alpha}  &  =c_{2,2}^{\alpha}=\frac{1}{\sin^{2}\frac{\alpha}{2}},\\
r_{2,1,0}^{\alpha}  &  =c_{2,1}^{\alpha}=\frac{2\cos\frac{\alpha}{2}}{\sin
^{2}\frac{\alpha}{2}},\\
r_{2,1,1}^{\alpha}  &  =\frac{2}{\sin\frac{\alpha}{2}}c_{2,2}^{\alpha}%
-\frac{1}{\sin\frac{\alpha}{2}}c_{2,1}^{\alpha}\cos\frac{\alpha}{2}%
=\frac{2\sin\frac{\alpha}{2}}{\sin^{2}\frac{\alpha}{2}}.
\end{align*}
\end{example}

\begin{example}
[\textbf{Normalizing constants for $n=2$}]\label{normalizing_coefficients_n_2}For
$n=2$, the system (\ref{system_partition_of_unity}) admits the unique solution%
\begin{align*}
r_{4,4,0}^{\alpha}  &  =c_{4,4}^{\alpha}=\frac{1}{\sin^{4}\frac{\alpha}{2}},\\
r_{4,3,0}^{\alpha}  &  =c_{4,3}^{\alpha}=\frac{4\cos\frac{\alpha}{2}}{\sin
^{4}\frac{\alpha}{2}},\\
r_{4,2,0}^{\alpha}  &  =c_{4,2}^{\alpha}=\frac{2+4\cos^{2}\frac{\alpha}{2}%
}{\sin^{4}\frac{\alpha}{2}},\\
r_{4,3,1}^{\alpha}  &  =\dfrac{2}{\sin\frac{\alpha}{2}}c_{4,2}^{\alpha}%
=\frac{4+8\cos^{2}\frac{\alpha}{2}}{\sin^{5}\frac{\alpha}{2}},\\
r_{4,2,1}^{\alpha}  &  =\dfrac{2}{\sin\frac{\alpha}{2}}c_{4,2}^{\alpha}%
\cos\frac{\alpha}{2}+\dfrac{3}{\sin\frac{\alpha}{2}}c_{4,3}^{\alpha}%
=\frac{16\cos\frac{\alpha}{2}+8\cos^{3}\frac{\alpha}{2}}{\sin^{5}\frac{\alpha
}{2}},\\
r_{4,2,2}^{\alpha}  &  =-\frac{6}{\sin^{2}\frac{\alpha}{2}}c_{4,4}^{\alpha
}-\frac{3\cos\frac{\alpha}{2}}{\sin^{2}\frac{\alpha}{2}}c_{4,3}^{\alpha
}+\left(  \frac{6}{\sin^{2}\frac{\alpha}{2}}-\frac{3\cos^{2}\frac{\alpha}{2}%
}{\sin^{2}\frac{\alpha}{2}}+2\right)  c_{4,2}^{\alpha}=\frac{10+20\cos
^{2}\frac{\alpha}{2}}{\sin^{4}\frac{\alpha}{2}}.
\end{align*}

\end{example}

\begin{remark}
Normalizing coefficients provided in Examples
\ref{normalizing_coefficients_n_1} and \ref{normalizing_coefficients_n_2} also
appeared (without any explanations) in articles \cite{Wang2010a} and
\cite{Wang2010b}, respectively.
\end{remark}

\begin{example}
[\textbf{Normalizing constants for $n=3$}]\label{normalizing_coefficients_n_3}If $n=3$,
the system (\ref{system_partition_of_unity}) has the unique solution%
\begin{align*}
r_{6,6,0}^{\alpha}  &  =c_{6,6}^{\alpha}=\frac{1}{\sin^{6}\frac{\alpha}{2}},\\
r_{6,5,0}^{\alpha}  &  =c_{6,5}^{\alpha}=\frac{6\cos\frac{\alpha}{2}}{\sin
^{6}\frac{\alpha}{2}},\\
r_{6,4,0}^{\alpha}  &  =c_{6,4}^{\alpha}=\frac{12\cos^{2}\frac{\alpha}{2}%
+3}{\sin^{6}\frac{\alpha}{2}},\\
r_{6,3,0}^{\alpha}  &  =c_{6,3}^{\alpha}=\frac{8\cos^{3}\frac{\alpha}%
{2}+12\cos\frac{\alpha}{2}}{\sin^{6}\frac{\alpha}{2}},\\
r_{6,5,1}^{\alpha}  &  =\frac{2}{\sin\frac{\alpha}{2}}c_{6,4}^{\alpha}%
=\frac{24\cos^{2}\frac{\alpha}{2}+6}{\sin^{7}\frac{\alpha}{2}},\\
r_{6,4,1}^{\alpha}  &  =\frac{3}{\sin\frac{\alpha}{2}}c_{6,3}^{\alpha}%
+\frac{2\cos\frac{\alpha}{2}}{\sin\frac{\alpha}{2}}c_{6,4}^{\alpha}%
=\frac{48\cos^{3}\frac{\alpha}{2}+42\cos\frac{\alpha}{2}}{\sin^{7}\frac
{\alpha}{2}},\\
r_{6,3,1}^{\alpha}  &  =\frac{4}{\sin\frac{\alpha}{2}}c_{6,4}^{\alpha}%
+\frac{3\cos\frac{\alpha}{2}}{\sin\frac{\alpha}{2}}c_{6,3}^{\alpha}%
=\frac{24\cos^{4}\frac{\alpha}{2}+84\cos^{2}\frac{\alpha}{2}+12}{\sin^{7}%
\frac{\alpha}{2}},\\
r_{6,4,2}^{\alpha}  &  =c_{6,4}\left(  2+\dfrac{6}{\sin^{2}\frac{\alpha}{2}%
}\right)  +c_{6,3}^{\alpha}\frac{6}{\sin^{2}\frac{\alpha}{2}}\cos\frac{\alpha
}{2}=\frac{24\cos^{4}\frac{\alpha}{2}+162\cos^{2}\frac{\alpha}{2}+24}{\sin
^{8}\frac{\alpha}{2}},\\
r_{6,3,2}^{\alpha}  &  =-10\frac{1}{\sin^{2}\frac{\alpha}{2}}c_{6,5}^{\alpha
}+\left(  \frac{8\cos^{3}\frac{\alpha}{2}}{\sin^{2}\frac{\alpha}{2}}%
+8\cos\frac{\alpha}{2}\right)  c_{6,4}^{\alpha}+\left(  \frac{6\cos^{2}%
\frac{\alpha}{2}}{\sin^{2}\frac{\alpha}{2}}+\frac{12}{\sin^{2}\frac{\alpha}%
{2}}+3\right)  c_{6,3}^{\alpha}\\
&  =\frac{24\cos^{5}\frac{\alpha}{2}+252\cos^{3}\frac{\alpha}{2}+144\cos
\frac{\alpha}{2}}{\sin^{8}\frac{\alpha}{2}},\\
r_{6,3,3}^{\alpha}  &  =\frac{20}{\sin^{3}\frac{\alpha}{2}}c_{6,6}^{\alpha
}+\frac{50}{\sin^{3}\frac{\alpha}{2}}\cos\frac{\alpha}{2}c_{6,5}^{\alpha
}-\left(  \frac{56}{\sin^{3}\frac{\alpha}{2}}\cos^{4}\frac{\alpha}{2}%
-\frac{60}{\sin^{3}\frac{\alpha}{2}}\cos^{2}\frac{\alpha}{2}-\frac{8}%
{\sin\frac{\alpha}{2}}+\frac{56}{\sin\frac{\alpha}{2}}\cos^{2}\frac{\alpha}%
{2}-\frac{4}{\sin^{3}\frac{\alpha}{2}}\right)  c_{6,4}^{\alpha}\\
&  -\left(  \frac{10}{\sin^{3}\frac{\alpha}{2}}\cos^{3}\frac{\alpha}{2}%
-\frac{3}{\sin\frac{\alpha}{2}}\cos\frac{\alpha}{2}+\frac{12}{\sin^{3}%
\frac{\alpha}{2}}\cos\frac{\alpha}{2}\right)  c_{6,3}^{\alpha}\\
&  =\frac{-104\cos^{6}\frac{\alpha}{2}-276\cos^{4}\frac{\alpha}{2}+324\cos
^{2}\frac{\alpha}{2}+56}{\sin^{9}\frac{\alpha}{2}}.
\end{align*}

\end{example}

\begin{remark}
If one intends to numerically calculate the normalizing coefficients, we suggest to exploit symmetry conditions (\ref{symmetry_condition_1}%
)--(\ref{symmetry_condition_3}). In this way, the size of square collocation
matrices -- that potentially appear in linear systems of equations to be
solved -- can be reduced from $\delta_{n}=3n\left(  n+1\right)  +1$ to
$\frac{1}{2}\left(  n+1\right)  \left(  n+2\right)  $ that can be further
reduced to $\frac{1}{2}n\left(  n+1\right)  $ by means of Remark
\ref{normalizing_coeff_of_level_0}.
\end{remark}

\begin{definition}
[\textbf{Constrained trivariate trigonometric blending system}]The
normalized basis functions of the system%
\begin{equation}%
\begin{array}
[c]{ccl}%
\overline{T}_{2n}^{\alpha} & = & \left\{  \overline{R}_{2n,2n-i,j}^{\alpha
}\left(  u,v,w\right)  ,\overline{G}_{2n,2n-i,j}^{\alpha}\left(  u,v,w\right)
,\overline{B}_{2n,2n-i,j}^{\alpha}\left(  u,v,w\right)  :\left(  u,v,w\right)
\in\Omega^{\alpha}\right\}  _{j=0,i=j}^{n-1,2n-1-j}\\
&  & \\
&  & \cup\left\{  \overline{R}_{2n,n,n}^{\alpha}\left(  u,v,w\right)
=\overline{G}_{2n,n,n}^{\alpha}\left(  u,v,w\right)  =\overline{B}%
_{2n,n,n}^{\alpha}\left(  u,v,w\right)  :\left(  u,v,w\right)  \in
\Omega^{\alpha}\right\}
\end{array}
\label{normalized_united_system}%
\end{equation}
are called constrained trivariate trigonometric blending functions of order
$n\geq1$, where%
\begin{align*}
\overline{R}_{2n,2n-i,j}^{\alpha}\left(  u,v,w\right)   &  =r_{2n,2n-i,j}%
^{\alpha}R_{2n,2n-i,j}^{\alpha}\left(  u,v,w\right)  ,\\
\overline{G}_{2n,2n-i,j}^{\alpha}\left(  u,v,w\right)   &  =g_{2n,2n-i,j}%
^{\alpha}G_{2n,2n-i,j}^{\alpha}\left(  u,v,w\right)  ,\\
\overline{B}_{2n,2n-i,j}^{\alpha}\left(  u,v,w\right)   &  =b_{2n,2n-i,j}%
^{\alpha}B_{2n,2n-i,j}^{\alpha}\left(  u,v,w\right)
\end{align*}
and
\[
\overline{R}_{2n,n,n}^{\alpha}\left(  u,v,w\right)  =\overline{G}%
_{2n,n,n}^{\alpha}\left(  u,v,w\right)  =\overline{B}_{2n,n,n}^{\alpha}\left(
u,v,w\right)  =r_{2n,n,n}^{\alpha}R_{2n,n,n}^{\alpha}\left(  u,v,w\right)  .
\]

\end{definition}

\begin{remark}
Based on Examples \ref{normalizing_coefficients_n_1}--\ref{normalizing_coefficients_n_3}, in addition to the property of partition of unity, the first, the second and the third order cases of the function system (\ref{normalized_united_system}) are also non-negative. The non-negativity, the symmetry and closed formulas of normalizing coefficients of the constrained trivariate function system (\ref{normalized_united_system}) of arbitrary order at the moment constitute one of our open problems that will be detailed in Section \ref{sec:open_problems}.
\end{remark}

Subsections \ref{sec:triangular_trigonometric_patches} and
\ref{sec:triangular_rational_trigonometric_patches} introduce the notions of
general triangular trigonometric and rational trigonometric patches of order
$n$, respectively.

\section{Applications}\label{sec:applications}

One can easily associate control nets with the oriented graph (that induced
the constrained trivariate trigonometric function systems $\overline{R}%
_{2n}^{\alpha}$, $\overline{G}_{2n}^{\alpha}$ and $\overline{B}_{2n}^{\alpha}%
$) as shown in Fig.\ \ref{fig:associated_control_net}.

\begin{figure}
[!htb]
\begin{center}
\includegraphics[scale = 1
]%
{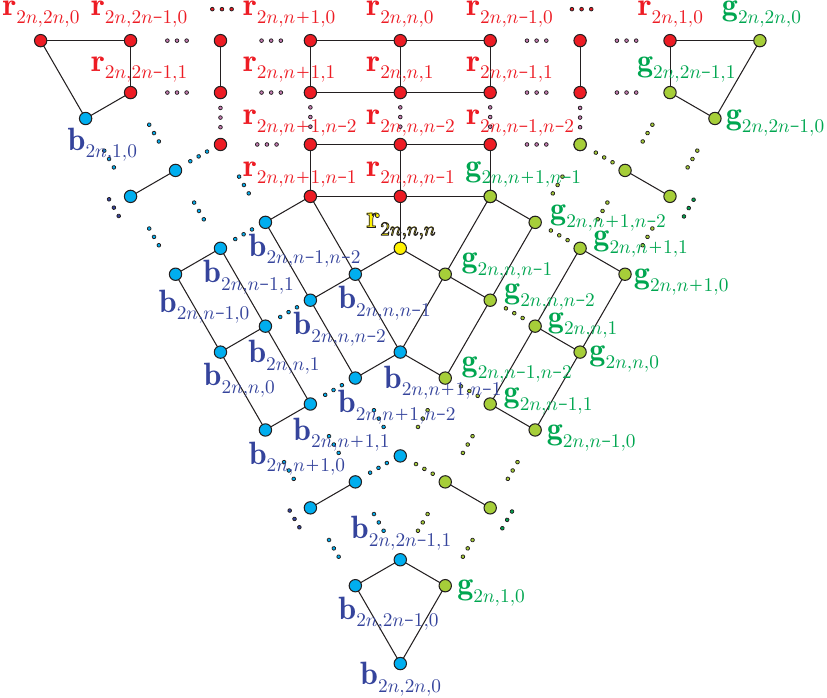}%
\caption{Layout of control points.}%
\label{fig:associated_control_net}%
\end{center}
\end{figure}

\begin{remark}
Naturally, one may triangulate the quadrangular "faces" of the control net shown in Fig.\ \ref{fig:associated_control_net}. However, the reason for not doing so is that our intention was to preserve the relations between constrained trivariate trigonometric functions induced by the oriented and multiplicatively weighted graph shown in Fig.\ \ref{fig:oriented_graph}, i.e., in this way the edges of the control net have a well-defined meaning. Probably, a deeper understanding of the graph may lead to additional edges needed for a possible triangulation.
\end{remark}

\subsection{\label{sec:triangular_trigonometric_patches}Triangular trigonometric patches}

By means of linear combinations of control points and blending functions
(\ref{normalized_united_system}) one can define a new surface modeling tool.

\begin{definition}
[\textbf{Triangular trigonometric patches}]\label{def:surface_patch}The constrained
trivariate vector function $\mathbf{s}_{n}^{\alpha}:\Omega^{\alpha}\rightarrow%
\mathbb{R}
^{3}$ of the form%
\begin{equation}%
\begin{array}
[c]{rcl}%
\mathbf{s}_{n}^{\alpha}\left(  u,v,w\right)  & = & \mathbf{r}_{2n,n,n}%
\overline{R}_{2n,n,n}^{\alpha}\left(  u,v,w\right)  +%
{\displaystyle\sum\limits_{j=0}^{n-1}}
{\displaystyle\sum\limits_{i=j}^{2n-1-j}}
\mathbf{r}_{2n,2n-i,j}\overline{R}_{2n,2n-i,j}^{\alpha}\left(  u,v,w\right) \\
&  & +%
{\displaystyle\sum\limits_{j=0}^{n-1}}
{\displaystyle\sum\limits_{i=j}^{2n-1-j}}
\mathbf{g}_{2n,2n-i,j}\overline{G}_{2n,2n-i,j}^{\alpha}\left(  u,v,w\right)  +%
{\displaystyle\sum\limits_{j=0}^{n-1}}
{\displaystyle\sum\limits_{i=j}^{2n-1-j}}
\mathbf{b}_{2n,2n-i,j}\overline{B}_{2n,2n-i,j}^{\alpha}\left(  u,v,w\right)
\end{array}
\label{triangular_trig_surface}%
\end{equation}
is called triangular trigonometric patch of order $n\geq1$, where vectors
\[
\left\{  \mathbf{r}_{2n,2n-i,j}\right\}  _{j=0,i=j}^{n,2n-1-j}\cup\left\{
\mathbf{g}_{2n,2n-i,j}\right\}  _{j=0,i=j}^{n-1,2n-1-j}\cup\left\{
\mathbf{b}_{2n,2n-i,j}\right\}  _{j=0,i=j}^{n-1,2n-1-j}\subset%
\mathbb{R}
^{3}%
\]
define its control net.
\end{definition}

Fig.\ \ref{fig:third_order_patch} illustrates a third order triangular trigonometric patch defined by $\delta_{3}=37$ control points.

\begin{figure}
[!htb]
\begin{center}
\includegraphics[
natheight=2.745800in,
natwidth=5.176800in,
height=2.7735in,
width=5.2036in
]%
{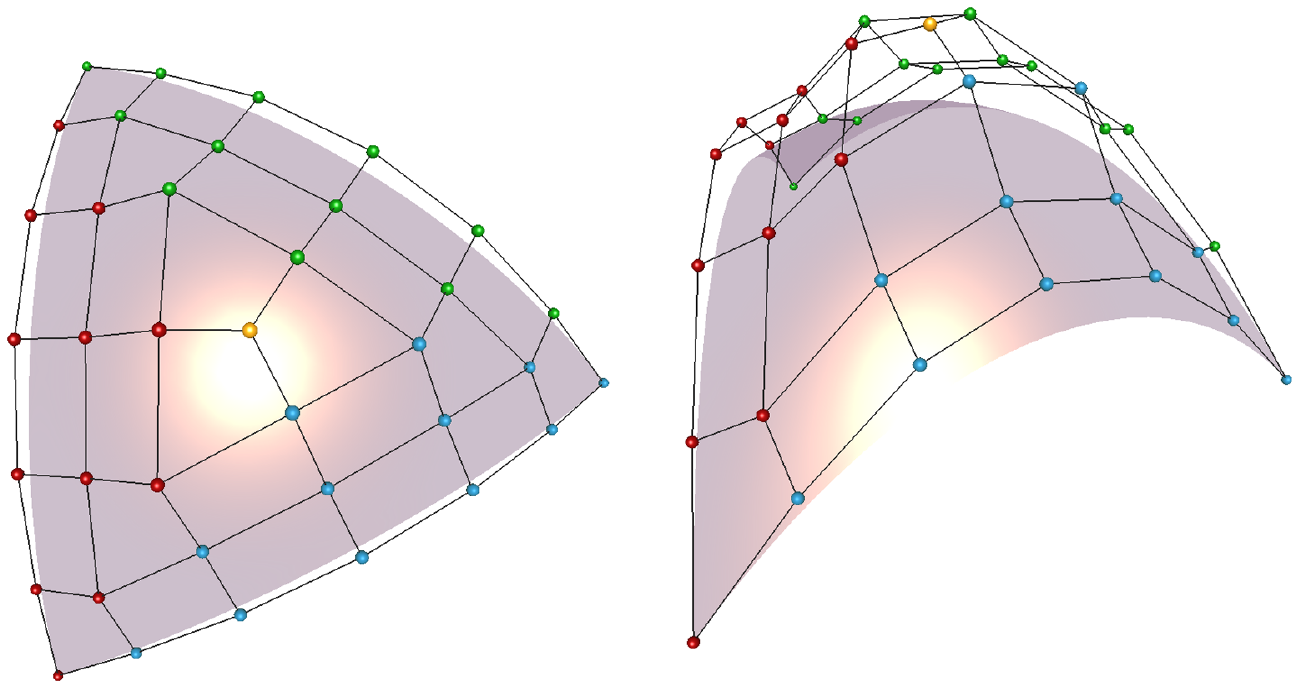}%
\caption{Different views of the same third order triangular trigonometric patch
along with its control net ($\alpha=\frac{\pi}{2}$).}%
\label{fig:third_order_patch}%
\end{center}
\end{figure}

Definition \ref{def:surface_patch} implies the following obvious geometric properties.

\begin{itemize}
\item
The triangular trigonometric patch (\ref{triangular_trig_surface}) has a global free shape parameter $\alpha \in \left(0,\pi\right).$

\item Since the basis (\ref{normalized_united_system}) consists of
functions that form a partition of the unity, any patch of the type
(\ref{triangular_trig_surface}) is invariant under the affine transformations
of its control points. Moreover, in the special cases of first, second and third order triangular trigonometric patches we have already proved that their shape lies in the convex hull of their control points, since in these cases the normalized function system (\ref{normalized_united_system}) is also non-negative. At the moment, the convex hull property of the patch (\ref{triangular_trig_surface}) of arbitrary order -- that is equivalent to the non-negativity of the normalizing coefficients of the basis (\ref{united_system}) -- is an open problem. However, considering the quadratic growth of the control points, it is very unlikely that in CAGD patches of order higher than three would be used for modeling. 

\item Due to boundary properties (\ref{boundary_property_1}%
)--(\ref{boundary_property_3}), the boundary curves of the patch
(\ref{triangular_trig_surface}) can be expressed in the univariate B-basis
(\ref{univariate_basis}), i.e.,%
\begin{align*}
\mathbf{s}_{n}^{\alpha}\left(  0,\alpha-w,w\right)   &  =\sum_{i=0}%
^{2n}\mathbf{g}_{2n,2n-i,0}A_{2n,2n-i}^{\alpha}\left(  w\right)  ,~w\in\left[
0,\alpha\right]  ,\\
\mathbf{s}_{n}^{\alpha}\left(  u,0,\alpha-u\right)   &  =\sum_{i=0}%
^{2n}\mathbf{r}_{2n,2n-i,0}A_{2n,2n-i}^{\alpha}\left(  u\right)  ,~u\in\left[
0,\alpha\right]  ,\\
\mathbf{s}_{n}^{\alpha}\left(  \alpha-v,v,0\right)   &  =\sum_{i=0}%
^{2n}\mathbf{b}_{2n,2n-i,0}A_{2n,2n-i}^{\alpha}\left(  v\right)  ,~v\in\left[
0,\alpha\right]  ,
\end{align*}
as a result of which (\ref{triangular_trig_surface}) interpolates the three
corners $\mathbf{r}_{2n,2n,0}$, $\mathbf{g}_{2n,2n,0}$ and $\mathbf{b}%
_{2n,2n,0}$ of its control net and its boundary curves can
exactly describe arcs of any trigonometric parametric curve the coordinate
functions of which are in the vector space (\ref{truncated_Fourier_vector_space}). The tangent planes at these corners are spanned by the terminal edges of the control polygons that generate the boundary curves of the given patch. Moreover, on account of Proposition \ref{Bernstein}, when $\alpha \to 0$ the boundary curves degenerate to B\'ezier curves of degree $2n$.
\item
It provides $\alpha$-dependent control point configurations for the exact description of
triangular patches of arbitrary trigonometric surfaces the coordinate
functions of which are given in the vector space
(\ref{constrained_trivariate_extension_of_truncated_Fourier_series}) (consider Example \ref{exmp:torus}).
\end{itemize}

\begin{example}
[\textbf{Control point based exact description of a toroidal triangle}]\label{exmp:torus}
Parametric equations%
\begin{equation}
\mathbf{t}_{\varrho,\mu}\left(  u,v\right)  =\left[
\begin{array}
[c]{ccc}%
\left(  \varrho+\mu\sin u\right)  \cos v & \left(  \varrho+\mu\sin u\right)
\sin v & \mu\cos u
\end{array}
\right]  ^{T},~u\in\left[  0,\alpha\right]  ,~v\in\left[  0,\alpha-u\right]
\label{toroidal_triangle}%
\end{equation}
define a triangular patch of a ring torus, where $\varrho>0$ is the distance
from the origin to the center of the meridian circle of radius $\mu\in\left(
0,\rho\right]$. The toroidal triangle
(\ref{toroidal_triangle}) can exactly be described by a second order triangular
trigonometric patch defined by control points%
\begin{align*}%
\color[rgb]{1.0, 0.0, 0.0}%
\mathbf{r}_{4,4,0}%
\normalcolor
&  =\left[
\begin{array}
[c]{ccc}%
\varrho+\mu\sin\alpha & 0 & \mu\cos\alpha
\end{array}
\right]  ^{T},\\%
\color[rgb]{1.0, 0.0, 0.0}%
\mathbf{r}_{4,3,0}%
\normalcolor
&  =\left[
\begin{array}
[c]{ccc}%
\varrho+\frac{\mu}{2}\left(  \tan\frac{\alpha}{2}+\sin\alpha\right)  & 0 &
\frac{\mu}{2}\left(  1+\cos\alpha\right)
\end{array}
\right]  ^{T},\\%
\color[rgb]{1.0, 0.0, 0.0}%
\mathbf{r}_{4,2,0}%
\normalcolor
&  =\left[
\begin{array}
[c]{ccc}%
\varrho+\frac{3\mu\sin\alpha}{4+2\cos\alpha} & 0 & \frac{3\mu\left(
1+\cos\alpha\right)  }{4+2\cos\alpha}%
\end{array}
\right]  ^{T},\\%
\color[rgb]{1.0, 0.0, 0.0}%
\mathbf{r}_{4,1,0}%
\normalcolor
&  =\left[
\begin{array}
[c]{ccc}%
\varrho+\frac{\mu}{2}\tan\frac{\alpha}{2} & 0 & \mu
\end{array}
\right]  ^{T},\\
& \\%
\color[rgb]{1.0, 0.0, 0.0}%
\mathbf{r}_{4,3,1}%
\normalcolor
&  =\left[
\begin{array}
[c]{ccc}%
\varrho+\frac{3\mu\sin\alpha}{4+2\cos\alpha} & \frac{3\varrho\sin\alpha
+\mu\left(  3+\cos\alpha\right)  \left(  1-\cos\alpha\right)  }{8+4\cos\alpha}
& \frac{3\mu\left(  1+\cos\alpha\right)  }{4+2\cos\alpha}%
\end{array}
\right]  ^{T},\\%
\color[rgb]{1.0, 0.0, 0.0}%
\mathbf{r}_{4,2,1}%
\normalcolor
&  =\left[
\begin{array}
[c]{ccc}%
\varrho+\frac{3\mu\left(  2+\cos\alpha\right)  \sin\alpha}{\left(
1+\cos\alpha\right)  \left(  10+2\cos\alpha\right)  } & \frac{2\varrho\left(
\sin\alpha+\tan\frac{\alpha}{2}\right)  +3\mu\left(  1-\cos\alpha\right)
}{10+2\cos\alpha} & \frac{3\mu\left(  3+\cos\alpha\right)  }{10+2\cos\alpha}%
\end{array}
\right]  ^{T},\\
& \\%
\color[rgb]{1.0, 0.6, 0.0}%
\mathbf{r}_{4,2,2}%
\normalcolor
&  =\left[
\begin{array}
[c]{ccc}%
\frac{6\varrho\left(  3+2\cos\alpha\right)  +\mu\left(  9+\cos\alpha\right)
\sin\alpha}{20+10\cos\alpha} & \frac{10\varrho\sin\alpha+\mu\left(
7+\cos\alpha\right)  \left(  1-\cos\alpha\right)  }{20+10\cos\alpha} &
\frac{3\mu\left(  3+2\cos\alpha\right)  }{10+5\cos\alpha}%
\end{array}
\right]  ^{T},\\
& \\%
\color[rgb]{0.0, 0.4, 0.0}%
\mathbf{g}_{4,4,0}%
\normalcolor
&  =\left[
\begin{array}
[c]{ccc}%
\varrho & 0 & \mu
\end{array}
\right]  ^{T},\\%
\color[rgb]{0.0, 0.4, 0.0}%
\mathbf{g}_{4,3,0}%
\normalcolor
&  =\left[
\begin{array}
[c]{ccc}%
\varrho & \frac{\varrho}{2}\tan\frac{\alpha}{2} & \mu
\end{array}
\right]  ^{T},\\%
\color[rgb]{0.0, 0.4, 0.0}%
\mathbf{g}_{4,2,0}%
\normalcolor
&  =\left[
\begin{array}
[c]{ccc}%
\frac{3\varrho\left(  1+\cos\alpha\right)  }{4+2\cos\alpha} & \frac
{3\varrho\sin\alpha}{4+2\cos\alpha} & \mu
\end{array}
\right]  ^{T},\\%
\color[rgb]{0.0, 0.4, 0.0}%
\mathbf{g}_{4,1,0}%
\normalcolor
&  =\left[
\begin{array}
[c]{ccc}%
\frac{\varrho}{2}\left(  1+\cos\alpha\right)  & \frac{\varrho}{2}\left(
\tan\frac{\alpha}{2}+\sin\alpha\right)  & \mu
\end{array}
\right]  ^{T},\\
& \\%
\color[rgb]{0.0, 0.4, 0.0}%
\mathbf{g}_{4,3,1}%
\normalcolor
&  =\left[
\begin{array}
[c]{ccc}%
\varrho+\frac{3\mu\sin\alpha}{8+4\cos\alpha} & \frac{3\varrho\sin\alpha
+2\mu\left(  1-\cos\alpha\right)  }{8+4\cos\alpha} & \mu
\end{array}
\right]  ^{T},\\%
\color[rgb]{0.0, 0.4, 0.0}%
\mathbf{g}_{4,2,1}%
\normalcolor
&  =\left[
\begin{array}
[c]{ccc}%
\frac{3\varrho\left(  3+\cos\alpha\right)  +3\mu\sin\alpha}{10+2\cos\alpha} &
\frac{3\varrho\left(  3\sin\frac{\alpha}{2}+\sin\frac{3\alpha}{2}\right)
+3\mu\left(  \cos\frac{\alpha}{2}-\cos\frac{3\alpha}{2}\right)  }{\left(
20+4\cos\alpha\right)  \cos\frac{\alpha}{2}} & \mu
\end{array}
\right]  ^{T},\\
& \\%
\color[rgb]{0.0, 0.0, 0.7}%
\mathbf{b}_{4,4,0}%
\normalcolor
&  =\left[
\begin{array}
[c]{ccc}%
\varrho\cos\alpha & \varrho\sin\alpha & \mu
\end{array}
\right]  ^{T},\\%
\color[rgb]{0.0, 0.0, 0.7}%
\mathbf{b}_{4,3,0}%
\normalcolor
&  =\left[
\begin{array}
[c]{ccc}%
\frac{\varrho}{2}\left(  1+\cos\alpha\right)  +\frac{\mu}{2}\tan\frac{\alpha
}{2}\cos\alpha & \frac{\varrho}{2}\left(  \tan\frac{\alpha}{2}+\sin
\alpha\right)  +\frac{\mu}{2}\left(  1-\cos\alpha\right)  & \mu
\end{array}
\right]  ^{T},\\%
\color[rgb]{0.0, 0.0, 0.7}%
\mathbf{b}_{4,2,0}%
\normalcolor
&  =\left[
\begin{array}
[c]{ccc}%
\frac{3\varrho\left(  1+\cos\alpha\right)  }{4+2\cos\alpha}+\frac{\mu}{2}%
\sin\alpha & \frac{3\varrho\sin\alpha+\mu\left(  3+\cos\alpha\right)  \left(
1-\cos\alpha\right)  }{4+2\cos\alpha} & \frac{3\mu\left(  1+\cos\alpha\right)
}{4+2\cos\alpha}%
\end{array}
\right]  ^{T},\\%
\color[rgb]{0.0, 0.0, 0.7}%
\mathbf{b}_{4,1,0}%
\normalcolor
&  =\left[
\begin{array}
[c]{ccc}%
\varrho+\frac{\mu}{2}\left(  \tan\frac{\alpha}{2}+\sin\alpha\right)  &
\frac{\varrho}{2}\tan\frac{\alpha}{2}+\frac{\mu}{2}\left(  1-\cos\alpha\right)
& \frac{\mu}{2}\left(  1+\cos\alpha\right)
\end{array}
\right]  ^{T},\\
& \\%
\color[rgb]{0.0, 0.0, 0.7}%
\mathbf{b}_{4,3,1}%
\normalcolor
&  =\left[
\begin{array}
[c]{ccc}%
\frac{\varrho\left(  7-2\cos^{2}\frac{\alpha}{2}+\cos\frac{\alpha}{2}\right)
\cos^{2}\frac{\alpha}{2}}{4+2\cos\alpha}+\frac{\mu}{4}\sin\alpha &
\frac{6\varrho\sin\alpha+\mu\left(  1-\cos\alpha\right)  \left(  3+\cos
\alpha\right)  }{8+4\cos\alpha} & \mu
\end{array}
\right]  ^{T},\\%
\color[rgb]{0.0, 0.0, 0.7}%
\mathbf{b}_{4,2,1}%
\normalcolor
&  =\left[
\begin{array}
[c]{ccc}%
\frac{3\varrho\left(  3+\cos\alpha\right)  \cos\frac{\alpha}{2}+\mu\left(
3\sin\frac{\alpha}{2}+2\sin\frac{3\alpha}{2}\right)  }{\left(  10+2\cos
\alpha\right)  \cos\frac{\alpha}{2}} & \frac{3\varrho\left(  3\sin\frac
{\alpha}{2}+\sin\frac{3\alpha}{2}\right)  +4\mu\left(  \cos\frac{\alpha}%
{2}-\cos\frac{3\alpha}{2}\right)  }{\left(  20+4\cos\alpha\right)  \cos
\frac{\alpha}{2}} & \frac{3\mu\left(  3+\cos\alpha\right)  }{10+2\cos\alpha}%
\end{array}
\right]  ^{T}.
\end{align*}

Fig.\ \ref{fig:torus} illustrates several triangular trigonometric
patches of order $2$ that are smoothly joined in order to form a part of a ring torus.%
\end{example}

\begin{figure}
[!htb]
\begin{center}
\includegraphics[
natheight=2.534800in,
natwidth=3.578600in,
height=2.5763in,
width=3.6262in
]%
{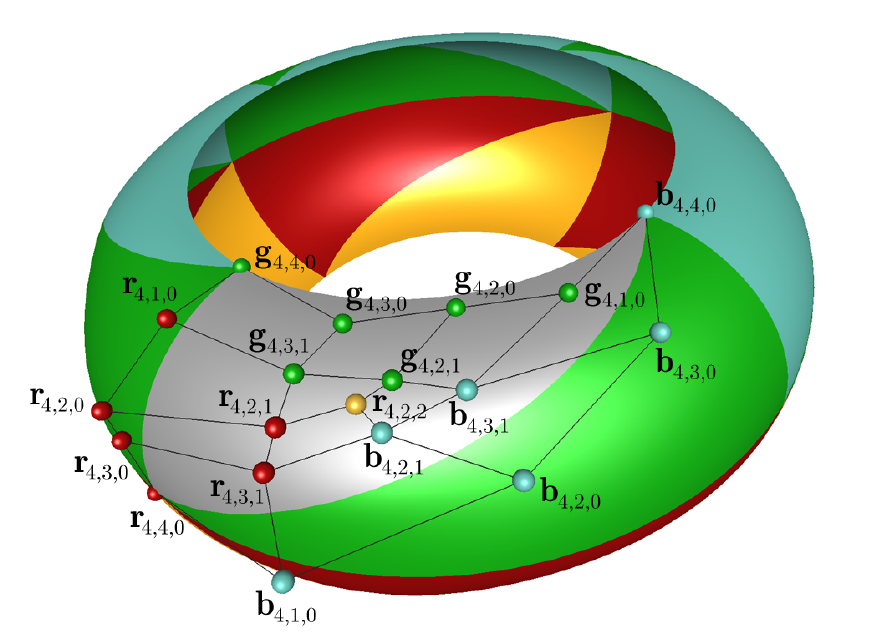}%
\caption{
Control point based exact description of a ring torus by means of smooth
second order triangular trigonometric patches. The
displayed control net generates the toroidal triangle (\ref{toroidal_triangle}) with settings $\mu=3$, $\rho = \frac{1+\sqrt{5}}{2}\mu$ and $\alpha=\frac{\pi}{2}$. A triangular part of a ring torus can also be described by polynomial rational B\'ezier or B-spline triangles. In contrast to existing polynomial triangular patches, our trigonometric modeling tool does not require additional weights that have to be associated with control points. Moreover, our approach can be used to exactly describe triangular parts of other non-rational trigonometric surfaces that can only be approximated by means of classical (rational) polynomial patches.}%
\label{fig:torus}%
\end{center}
\end{figure}

The next subsection introduces the rational counterpart both of the basis
(\ref{normalized_united_system}) and triangular trigonometric patch
(\ref{triangular_trig_surface}).

\subsection{\label{sec:triangular_rational_trigonometric_patches}Rational triangular
trigonometric patches}

By means of non-negative scalar values (weights)%
\[
\rho_{2n,n,n},\left\{  \rho_{2n,2n-i,j}\right\}  _{j=0,i=j}^{n-1,2n-1-j}%
,\left\{  \gamma_{2n,2n-i,j}\right\}  _{j=0,i=j}^{n-1,2n-1-j},\left\{
\beta_{2n,2n-i,j}\right\}  _{j=0,i=j}^{n-1,2n-1-j}%
\]
of rank $1$, i.e.,%
\[
\rho_{2n,n,n}+\sum_{j=0}^{n-1}\sum_{i=j}^{2n-1-j}\rho_{2n,2n-i,j}+\sum
_{j=0}^{n-1}\sum_{i=j}^{2n-1-j}\gamma_{2n,2n-i,j}+\sum_{j=0}^{n-1}\sum
_{i=j}^{2n-1-j}\beta_{2n,2n-i,j}\neq0,
\]
one can also define rational triangular trigonometric patches.

\begin{definition}
[\textbf{Rational triangular trigonometric patches}]\label{def:rational_patch}We refer
to the constrained trivariate vector function $\mathbf{q}_{n}^{\alpha}%
:\Omega^{\alpha}\rightarrow%
\mathbb{R}
^{3}$ of the form%
\begin{equation}%
\begin{array}
[c]{rcl}%
\mathbf{q}_{n}^{\alpha}\left(  u,v,w\right)  & = & \rho_{2n,n,n}%
\mathbf{r}_{2n,n,n}\dfrac{\overline{R}_{2n,n,n}^{\alpha}\left(  u,v,w\right)
}{\overline{\tau}_{n}^{\alpha}\left(  u,v,w\right)  }+%
{\displaystyle\sum\limits_{j=0}^{n-1}}
{\displaystyle\sum\limits_{i=j}^{2n-1-j}}
\rho_{2n,2n-i,j}\mathbf{r}_{2n,2n-i,j}\dfrac{\overline{R}_{2n,2n-i,j}^{\alpha
}\left(  u,v,w\right)  }{\overline{\tau}_{n}^{\alpha}\left(  u,v,w\right)  }\\
&  & +%
{\displaystyle\sum\limits_{j=0}^{n-1}}
{\displaystyle\sum\limits_{i=j}^{2n-1-j}}
\gamma_{2n,2n-i,j}\mathbf{g}_{2n,2n-i,j}\dfrac{\overline{G}_{2n,2n-i,j}%
^{\alpha}\left(  u,v,w\right)  }{\overline{\tau}_{n}^{\alpha}\left(
u,v,w\right)  }\\
&  & +%
{\displaystyle\sum\limits_{j=0}^{n-1}}
{\displaystyle\sum\limits_{i=j}^{2n-1-j}}
\beta_{2n,2n-i,j}\mathbf{b}_{2n,2n-i,j}\dfrac{\overline{B}_{2n,2n-i,j}%
^{\alpha}\left(  u,v,w\right)  }{\overline{\tau}_{n}^{\alpha}\left(
u,v,w\right)  }%
\end{array}
\label{rational_patch}%
\end{equation}
as rational triangular trigonometric patch of order $n\geq1$, where vectors
\[
\left\{  \mathbf{r}_{2n,2n-i,j}\right\}  _{j=0,i=j}^{n,2n-1-j}\cup\left\{
\mathbf{g}_{2n,2n-i,j}\right\}  _{j=0,i=j}^{n-1,2n-1-j}\cup\left\{
\mathbf{b}_{2n,2n-i,j}\right\}  _{j=0,i=j}^{n-1,2n-1-j}\subset%
\mathbb{R}
^{3}%
\]
define its control net and%
\[%
\begin{array}
[c]{rcl}%
\overline{\tau}_{n}^{\alpha}\left(  u,v,w\right)  & = & \rho_{2n,n,n}%
\overline{R}_{2n,n,n}^{\alpha}\left(  u,v,w\right)  +%
{\displaystyle\sum\limits_{\ell=0}^{n-1}}
{\displaystyle\sum\limits_{k=\ell}^{2n-1-\ell}}
\rho_{2n,2n-k,\ell}\overline{R}_{2n,2n-k,\ell}^{\alpha}\left(  u,v,w\right) \\
&  & +%
{\displaystyle\sum\limits_{\ell=0}^{n-1}}
{\displaystyle\sum\limits_{k=\ell}^{2n-1-\ell}}
\gamma_{2n,2n-k,\ell}\overline{G}_{2n,2n-k,\ell}^{\alpha}+%
{\displaystyle\sum\limits_{\ell=0}^{n-1}}
{\displaystyle\sum\limits_{k=\ell}^{2n-1-\ell}}
\beta_{2n,2n-k,\ell}\overline{B}_{2n,2n-k,\ell}^{\alpha}\left(  u,v,w\right)
.
\end{array}
\]
Quotient functions in (\ref{rational_patch}) determine the constrained
trivariate rational trigonometric function system $\overline{Q}_{2n}^{\alpha}$
of order $n$ that inherits advantageous properties of $\overline{T}%
_{2n}^{\alpha}$, i.e., $\overline{Q}_{2n}^{\alpha}$ is also
normalized and linearly independent.
\end{definition}

Patch (\ref{rational_patch}) is closed for the projective
transformation of its control points, i.e., the patch determined by the
projectively transformed control points coincides with the pointwisely
transformed patch. 

As an example, Fig.\ \ref{fig:dupin_03} shows the control point based exact description of the ring Dupin cyclide
\begin{equation}
\frac{1}{a-c\cos\left(  u+\varphi\right)  \cos\left(  v+\psi\right)  }\left[
\begin{array}
[c]{l}%
\mu\left(  c-a\cos\left(  u+\varphi\right)  \cos\left(  v+\psi\right)
\right)  +b^{2}\cos\left(  u+\varphi\right) \\
\left(  a-\mu\cos\left(  v+\psi\right)  \right)  b\sin\left(  u+\varphi\right)
\\
\left(  c\cos\left(  u+\varphi\right)  -\mu\right)  b\sin\left(
v+\psi\right)
\end{array}
\right]  ,~u\in\left[  0,\alpha\right]  ,~v\in\left[  0,\alpha-u\right]  ,
\label{Dupin_in_3d}%
\end{equation}
by means of sixteen smoothly joined second order rational triangular trigonometric patches, where parameters $a$, $b$, $c$ and $\mu$ have to fulfill the conditions $a^{2}=b^{2}+c^{2}$ and $c<\mu\leq a$, while phase changes $\left(  \varphi,\psi\right)  \in\left[  0,2\pi\right]\times\left[  0,2\pi\right] $ are free design parameters that can be used to
slide the patch on the cyclide.

\begin{figure}
[!htb]
\begin{center}
\includegraphics[
natheight=4.430300in,
natwidth=6.004300in,
height=4.4581in,
width=6.0321in
]%
{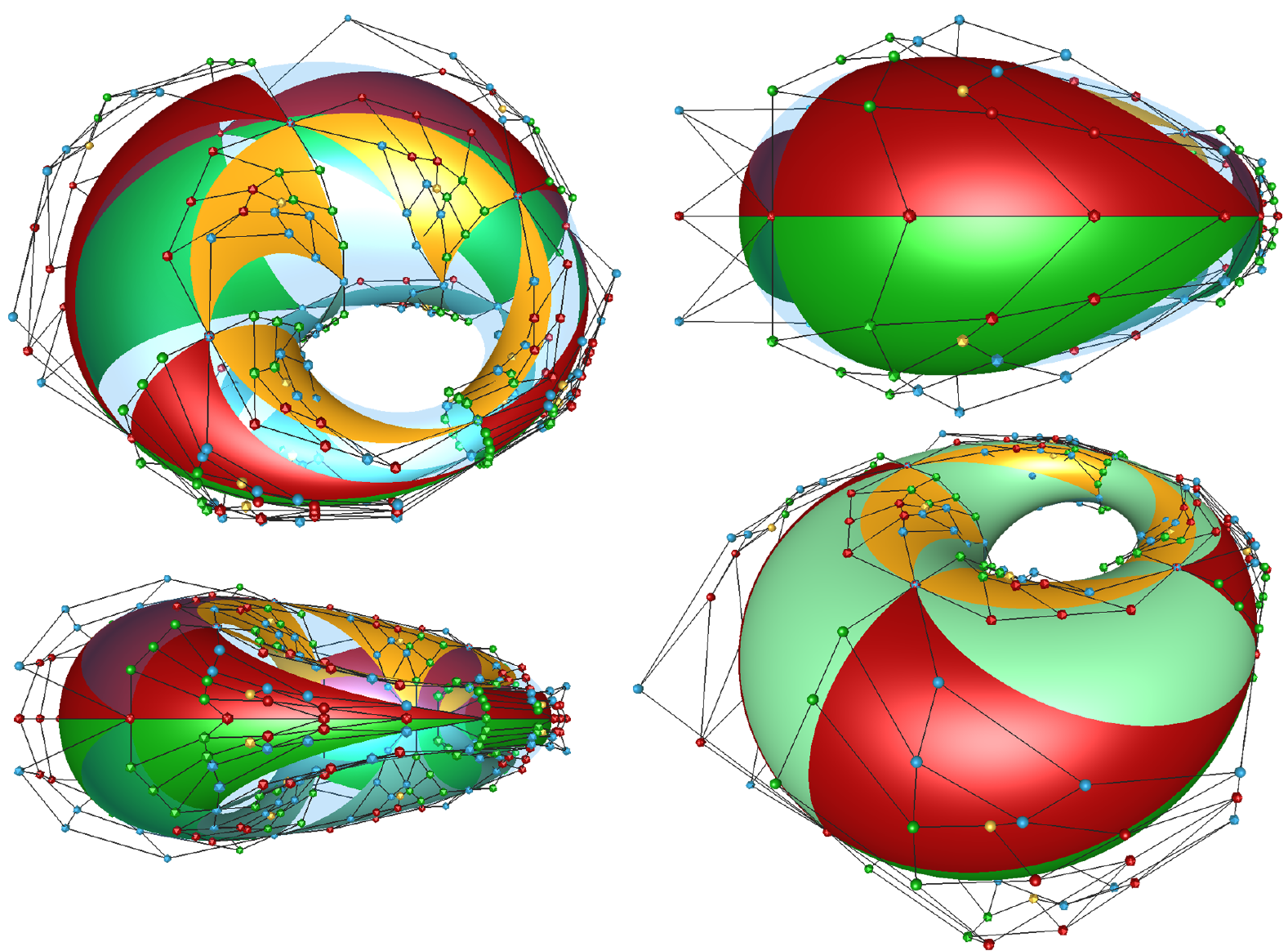}%
\caption{A possible control point based exact triangulization of a ring Dupin
cyclide (with settings $a = 6$, $b=4\sqrt{2}$, $c=2$ and $\mu=3$) which is rendered as a
transparent surface except the lower right figure. The shape parameter of each
second order rational triangular trigonometric patch is $\alpha=\frac{\pi}{2}$. (For interpretation
of the references to color in this figure legend, the reader is referred to
the web version of this paper.)}%
\label{fig:dupin_03}%
\end{center}
\end{figure}

In the next section we list several open problems that we could not solve for the time being.

\section{\label{sec:open_problems}Open problems}
Compared to the classical constrained trivariate Bernstein polynomials and
corresponding triangular patches, in this non-polynomial case, theoretical
questions are significantly harder to answer even for special values of the
order $n$. Assuming arbitrary values of the order $n$, this section formulates several open questions as follows.

\begin{question}
Are the normalizing coefficients of the constrained trivariate function system $T_{2n}^{\alpha}$ non-negative and symmetric? Can we give closed or at least recursive formulas for these coefficients?
\end{question}

Section \ref{sec:partition_of_unity} already described a technique to
determine the unique normalizing coefficients of the
constrained trivariate function system $T_{2n}^{\alpha}$. Due to the
complexity of this problem, closed formulas of these coefficients were given
only for levels $0$ and $1$ for arbitrary order, and for all levels
just for orders $n=1$, $2$ and $3$.

However, if one is able to transform the product
\[%
\begin{array}
[c]{rcl}%
1^{n+1} & = & \left(  \overline{R}_{2n,n,n}^{\alpha}+%
{\displaystyle\sum\limits_{j=0}^{n-1}}
{\displaystyle\sum\limits_{i=j}^{2n-1-j}}
\overline{R}_{2n,2n-i,j}^{\alpha}+%
{\displaystyle\sum\limits_{j=0}^{n-1}}
{\displaystyle\sum\limits_{i=j}^{2n-1-j}}
\overline{G}_{2n,2n-i,j}^{\alpha}+%
{\displaystyle\sum\limits_{j=0}^{n-1}}
{\displaystyle\sum\limits_{i=j}^{2n-1-j}}
\overline{B}_{2n,2n-i,j}^{\alpha}\right)  \\
&  & \\
&  & \cdot\left(  \overline{R}_{2,1,1}^{\alpha}+\overline{R}_{2,2,0}^{\alpha
}+\overline{R}_{2,1,0}^{\alpha}+\overline{G}_{2,2,0}^{\alpha}+\overline
{G}_{2,1,0}^{\alpha}+\overline{B}_{2,2,0}^{\alpha}+\overline{B}_{2,1,0}%
^{\alpha}\right)
\end{array}
\]
into the basis $\overline{T}_{2\left(  n+1\right)}^{\alpha}$, then this transformation would also provide:
\begin{itemize}
\item a recurrence relation between normalizing coefficients of consecutive orders;
\item the symmetry and non-negativity of normalizing coefficients;
\item the order elevation of the constrained trivariate trigonometric
blending system $\overline{T}_{2n}^{\alpha}$ (and consequently, the order elevation of triangular (rational) trigonometric patches
from arbitrary order $n$ to $n+1$).
\end{itemize}

In contrast to the technique presented in the proof of
Proposition \ref{prop:level_1_coefficients} (cf. \cite{tech_report}), the immediate inheritance of
non-negativity and symmetry of normalizing coefficients from lower to higher order
forms the advantage of this alternative method (consider Example \ref{exmp:from_1_to_2}), while its disadvantage lies in
its complexity and the tedious use of quite involved trigonometric identities.

For instance, it is relatively easy to transform the mixed products%
\[
\overline{R}_{2n,2n-i,j}^{\alpha}\cdot\overline{R}_{2,1,1}^{\alpha}%
,~\overline{R}_{2n,2n-i,j}^{\alpha}\cdot\overline{R}_{2,2,0}^{\alpha
},~\overline{R}_{2n,2n-i,j}^{\alpha}\cdot\overline{R}_{2,1,0}^{\alpha
},~\overline{R}_{2n,2n-i,j}^{\alpha}\cdot\overline{G}_{2,2,0}^{\alpha
}%
\]
into the basis $\overline{T}_{2\left(  n+1\right)  }^{\alpha}$ for all indices
$j=0,1,\ldots,n-1$ and $i=j,j+1,\ldots,2n-1-j$. However, in case of pairwise products%
\[
\overline{R}_{2n,2n-i,j}^{\alpha}\cdot\overline{B}_{2,2,0}^{\alpha},~
\overline{R}_{2n,2n-i,j}^{\alpha}\cdot\overline{G}_{2,1,0}^{\alpha}%
,~\overline{R}_{2n,2n-i,j}^{\alpha}\cdot\overline{B}_{2,1,0}^{\alpha}%
\]
one obtains functions that we could not write as the linear combination
of basis functions of $\overline{T}_{2\left(  n+1\right)  }^{\alpha}$, for the present. For the moment, we are only able (cf. \cite[Lemma 5.1]{tech_report}) to transform the pairwise products of first order blending functions into linear combinations of second order blending functions. Some immediate corollaries of this special transformation are presented in Examples \ref{exmp:from_1_to_2} and \ref{exmp:degree_elevation_from_1_to_2}.

\begin{example}[\textbf{Relation between first and second order normalizing coefficients}]
\label{exmp:from_1_to_2}
If one rewrites the square of unity%
\[
1^{2}=\left(  \overline{R}_{2,1,1}^{\alpha}+\overline{R}_{2,2,0}^{\alpha
}+\overline{R}_{2,1,0}^{\alpha}+\overline{G}_{2,2,0}^{\alpha}+\overline
{G}_{2,1,0}^{\alpha}+\overline{B}_{2,2,0}^{\alpha}+\overline{B}_{2,1,0}%
^{\alpha}\right)  ^{2}%
\]
into the second order basis $\overline{T}_{4}^{\alpha}$, we obtain the relations%
\begin{align*}
\left[
\begin{array}
[c]{c}%
\Red{r_{4,4,0}^{\alpha}}\\
\Green{g_{4,4,0}^{\alpha}}\\
\Blue{b_{4,4,0}^{\alpha}}%
\end{array}
\right]   &  =\left[
\begin{array}
[c]{c}%
\left(  \Red{r_{2,2,0}^{\alpha}}\right)  ^{2}\\
\left(  \Green{g_{2,2,0}^{\alpha}}\right)  ^{2}\\
\left(  \Blue{b_{2,2,0}^{\alpha}}\right)  ^{2}%
\end{array}
\right]  ,~\left[
\begin{array}
[c]{c}%
\Red{r_{4,3,0}^{\alpha}}\\
\Green{g_{4,3,0}^{\alpha}}\\
\Blue{b_{4,3,0}^{\alpha}}%
\end{array}
\right]  =\left[
\begin{array}
[c]{c}%
2\Red{r_{2,2,0}^{\alpha}r_{2,1,0}^{\alpha}}\\
2\Green{g_{2,2,0}^{\alpha}g_{2,1,0}^{\alpha}}\\
2\Blue{b_{2,2,0}^{\alpha}b_{2,1,0}^{\alpha}}%
\end{array}
\right]  ,~\left[
\begin{array}
[c]{c}%
\Red{r_{4,2,0}^{\alpha}}\\
\Green{g_{4,2,0}^{\alpha}}\\
\Blue{b_{4,2,0}^{\alpha}}%
\end{array}
\right]  =\left[
\begin{array}
[c]{c}%
2\Red{r_{2,2,0}^{\alpha}}\Green{g_{2,2,0}^{\alpha}}+\left(  \Red{r_{2,1,0}^{\alpha}}\right)
^{2}\\
2\Green{g_{2,2,0}^{\alpha}}\Blue{b_{2,2,0}^{\alpha}}+\left(  \Green{g_{2,1,0}^{\alpha}}\right)
^{2}\\
2\Red{r_{2,2,0}^{\alpha}}\Blue{b_{2,2,0}^{\alpha}}+\left(  \Blue{b_{2,1,0}^{\alpha}}\right)  ^{2}%
\end{array}
\right]  ,\\
\left[
\begin{array}
[c]{c}%
\Red{r_{4,1,0}^{\alpha}}\\
\Green{g_{4,1,0}^{\alpha}}\\
\Blue{b_{4,1,0}^{\alpha}}%
\end{array}
\right]   &  =\left[
\begin{array}
[c]{c}%
2\Red{r_{2,1,0}^{\alpha}}\Green{g_{2,2,0}^{\alpha}}\\
2\Green{g_{2,1,0}^{\alpha}}\Blue{b_{2,2,0}^{\alpha}}\\
2\Red{r_{2,2,0}^{\alpha}}\Blue{b_{2,1,0}^{\alpha}}%
\end{array}
\right]  ,\\
\left[
\begin{array}
[c]{c}%
\Red{r_{4,3,1}^{\alpha}}\\
\Green{g_{4,3,1}^{\alpha}}\\
\Blue{b_{4,3,1}^{\alpha}}%
\end{array}
\right]   &  =\left[
\begin{array}
[c]{c}%
2\Red{r_{2,2,0}^{\alpha}}\Orange{r_{2,1,1}^{\alpha}}+\left(  \Red{r_{2,2,0}^{\alpha}}\right)
^{2}\Green{g_{2,1,0}^{\alpha}}\sin\alpha+2\left(  1+\Red{r_{2,2,0}^{\alpha}}\cos^{2}%
\frac{\alpha}{2}\right)  \Red{r_{2,1,0}^{\alpha}}\Blue{b_{2,1,0}^{\alpha}}\sin\frac{\alpha
}{2}\\
2\Orange{r_{2,1,1}^{\alpha}}\Green{g_{2,2,0}^{\alpha}}+2\Red{r_{2,1,0}^{\alpha}}\left(
1+\Green{g_{2,2,0}^{\alpha}}\cos^{2}\frac{\alpha}{2}\right)  \Green{g_{2,1,0}^{\alpha}}%
\sin\frac{\alpha}{2}+\left(  \Green{g_{2,2,0}^{\alpha}}\right)  ^{2}\Blue{b_{2,1,0}^{\alpha}}\sin\alpha\\
2\Orange{r_{2,1,1}^{\alpha}}\Blue{b_{2,2,0}^{\alpha}}+\Red{r_{2,1,0}^{\alpha}}\left(  \Blue{b_{2,2,0}%
^{\alpha}}\right)  ^{2}\sin\alpha+2\Green{g_{2,1,0}^{\alpha}}\left(  1+\Blue{b_{2,2,0}%
^{\alpha}}\cos^{2}\frac{\alpha}{2}\right)  \Blue{b_{2,1,0}^{\alpha}}\sin\frac{\alpha
}{2}%
\end{array}
\right]  ,\\
\left[
\begin{array}
[c]{c}%
\Red{r_{4,2,1}^{\alpha}}\\
\\
\\
\\
\Green{g_{4,2,1}^{\alpha}}\\
\\
\\
\Blue{b_{4,2,1}^{\alpha}}\\
\end{array}
\right]   &  =\left[
\begin{array}
[c]{l}%
2\Red{r_{2,1,0}^{\alpha}}\Orange{r_{2,1,1}^{\alpha}}+\Red{r_{2,2,0}^{\alpha}}\Red{r_{2,1,0}^{\alpha
}}\Green{g_{2,1,0}^{\alpha}}\tan\frac{\alpha}{2}+\left(  \Red{r_{2,1,0}^{\alpha}}\right)
^{2}\Green{g_{2,1,0}^{\alpha}}\sin\frac{\alpha}{2}\\
+\Red{r_{2,1,0}^{\alpha}}\left(  \Blue{b_{2,1,0}^{\alpha}}\right)  ^{2}\sin\frac{\alpha}%
{2}+\Red{r_{2,1,0}^{\alpha}}\Green{g_{2,2,0}^{\alpha}}\Blue{b_{2,1,0}^{\alpha}}\tan\frac{\alpha}%
{2}\\
\\
2\Orange{r_{2,1,1}^{\alpha}}\Green{g_{2,1,0}^{\alpha}}+\Red{r_{2,1,0}^{\alpha}}\left(  \Green{g_{2,1,0}%
^{\alpha}}\right)  ^{2}\sin\frac{\alpha}{2}+\Red{r_{2,1,0}^{\alpha}}\Green{g_{2,1,0}%
^{\alpha}}\Blue{b_{2,2,0}^{\alpha}}\tan\frac{\alpha}{2}\\
+\Green{g_{2,2,0}^{\alpha}g_{2,1,0}^{\alpha}}\Blue{b_{2,1,0}^{\alpha}}\tan\frac{\alpha}%
{2}+\left(  \Green{g_{2,1,0}^{\alpha}}\right)  ^{2}\Blue{b_{2,1,0}^{\alpha}}\sin\frac{\alpha
}{2}\\
\\
2\Orange{r_{2,1,1}^{\alpha}}\Blue{b_{2,1,0}^{\alpha}}+\Red{r_{2,2,0}^{\alpha}}\Green{g_{2,1,0}^{\alpha
}}\Blue{b_{2,1,0}^{\alpha}}\tan\frac{\alpha}{2}+\Red{r_{2,1,0}^{\alpha}}\Blue{b_{2,2,0}^{\alpha
}b_{2,1,0}^{\alpha}}\tan\frac{\alpha}{2}\\
+\left(  \Red{r_{2,1,0}^{\alpha}}\right)  ^{2}\Blue{b_{2,1,0}^{\alpha}}\sin\frac{\alpha}%
{2}+\Green{g_{2,1,0}^{\alpha}}\left(  \Blue{b_{2,1,0}^{\alpha}}\right)  ^{2}\sin\frac{\alpha
}{2}%
\end{array}
\right]  ,\\
\Orange{r_{4,2,2}^{\alpha}} &  =\left(  \Orange{r_{2,1,1}^{\alpha}}\right)  ^{2}+2\Red{r_{2,2,0}%
^{\alpha}}\Green{g_{2,2,0}^{\alpha}}+2\Red{r_{2,2,0}^{\alpha}}\Blue{b_{2,2,0}^{\alpha}}%
+2\Green{g_{2,2,0}^{\alpha}}\Blue{b_{2,2,0}^{\alpha}}+2\Red{r_{2,1,0}^{\alpha}}\Green{g_{2,1,0}^{\alpha
}}+2\Red{r_{2,1,0}^{\alpha}}\Blue{b_{2,1,0}^{\alpha}}+2\Green{g_{2,1,0}^{\alpha}}\Blue{b_{2,1,0}^{\alpha}}
\end{align*}
between first and second order normalizing coefficients. It is easy to verify that these relations lead to the same second order
normalizing coefficients that were described in Example
\ref{normalizing_coefficients_n_2}. Observe that the symmetry and non-negativity
of normalizing coefficients of order $1$ are inherited by those of order
$2$.
\end{example}

\begin{example}[\textbf{Order elevation from $1$ to $2$}]
\label{exmp:degree_elevation_from_1_to_2}Consider a first order triangular trigonometric patch $\mathbf{s}_{1}^{\alpha}$ of type (\ref{triangular_trig_surface}). Rewriting the product%
\begin{align*}
\mathbf{s}_{1}^{\alpha}  \cdot1=  &  \left(
\Orange{\mathbf{r}_{2,1,1}}\overline{R}_{2,1,1}^{\alpha}+\Red{\mathbf{r}_{2,2,0}}\overline
{R}_{2,2,0}^{\alpha}+\Red{\mathbf{r}_{2,1,0}}\overline{R}_{2,1,0}^{\alpha
}+\Green{\mathbf{g}_{2,2,0}}\overline{G}_{2,2,0}^{\alpha}+\Green{\mathbf{g}_{2,1,0}}%
\overline{G}_{2,1,0}^{\alpha}+\Blue{\mathbf{b}_{2,2,0}}\overline{B}_{2,2,0}^{\alpha
}+\Blue{\mathbf{b}_{2,1,0}}\overline{B}_{2,1,0}^{\alpha}\right) \\
&  \cdot\left(  \overline{R}_{2,1,1}^{\alpha}+\overline{R}_{2,2,0}^{\alpha
}+\overline{R}_{2,1,0}^{\alpha}+\overline{G}_{2,2,0}^{\alpha}+\overline
{G}_{2,1,0}^{\alpha}+\overline{B}_{2,2,0}^{\alpha}+\overline{B}_{2,1,0}%
^{\alpha}\right)
\end{align*}
into the basis $\overline{T}_{4}^{\alpha}$ and then collecting the vector
coefficients of second order blending functions, one obtains the order elevated (i.e., second order) representation $\mathbf{s}_{2}^{\alpha}$ of the original patch $\mathbf{s}_{1}^{\alpha}$. Moreover, $\mathbf{s}_{2}^{\alpha}$ is generated by control points
\begin{align*}
\left[
\begin{array}
[c]{c}%
\Red{\mathbf{r}_{4,4,0}}
\\%
\Green{\mathbf{g}_{4,4,0}}
\\%
\Blue{\mathbf{b}_{4,4,0}}
\end{array}
\right]  =  &  \left[
\begin{array}
[c]{c}%
\Red{\mathbf{r}_{2,2,0}}
\\%
\Green{\mathbf{g}_{2,2,0}}
\\%
\Blue{\mathbf{b}_{2,2,0}}
\end{array}
\right]  ,\\
\left[
\begin{array}
[c]{c}%
\Red{\mathbf{r}_{4,3,0}}
\\%
\Green{\mathbf{g}_{4,3,0}}
\\%
\Blue{\mathbf{b}_{4,3,0}}
\end{array}
\right]  =  &  \frac{1}{2}\left[
\begin{array}
[c]{c}%
\Red{\mathbf{r}_{2,2,0}}
\\%
\Green{\mathbf{g}_{2,2,0}}
\\%
\Blue{\mathbf{b}_{2,2,0}}
\end{array}
\right]  +\frac{1}{2}\left[
\begin{array}
[c]{c}%
\Red{\mathbf{r}_{2,1,0}}
\\%
\Green{\mathbf{g}_{2,1,0}}
\\%
\Blue{\mathbf{b}_{2,1,0}}
\end{array}
\right]  ,\\
\left[
\begin{array}
[c]{c}%
\Red{\mathbf{r}_{4,2,0}}
\\%
\Green{\mathbf{g}_{4,2,0}}
\\%
\Blue{\mathbf{b}_{4,2,0}}
\end{array}
\right]  =  &  \frac{1}{2+4\cos^{2}\frac{\alpha}{2}}\left[
\begin{array}
[c]{c}%
\Red{\mathbf{r}_{2,2,0}}
\\%
\Green{\mathbf{g}_{2,2,0}}
\\%
\Blue{\mathbf{b}_{2,2,0}}
\end{array}
\right]  +\frac{2\cos^{2}\frac{\alpha}{2}}{1+2\cos^{2}\frac{\alpha}{2}}\left[
\begin{array}
[c]{c}%
\Red{\mathbf{r}_{2,1,0}}
\\%
\Green{\mathbf{g}_{2,1,0}}
\\%
\Blue{\mathbf{b}_{2,1,0}}
\end{array}
\right]  +\frac{1}{2+4\cos^{2}\frac{\alpha}{2}}\left[
\begin{array}
[c]{c}%
\Green{\mathbf{g}_{2,2,0}}
\\%
\Blue{\mathbf{b}_{2,2,0}}
\\%
\Red{\mathbf{r}_{2,2,0}}
\end{array}
\right]  ,\\
\left[
\begin{array}
[c]{c}%
\Red{\mathbf{r}_{4,1,0}}
\\%
\Green{\mathbf{g}_{4,1,0}}
\\%
\Blue{\mathbf{b}_{4,1,0}}
\end{array}
\right]  =  &  \frac{1}{2}\left[
\begin{array}
[c]{c}%
\Red{\mathbf{r}_{2,1,0}}
\\%
\Green{\mathbf{g}_{2,1,0}}
\\%
\Blue{\mathbf{b}_{2,1,0}}
\end{array}
\right]  +\frac{1}{2}\left[
\begin{array}
[c]{c}%
\Green{\mathbf{g}_{2,2,0}}
\\%
\Blue{\mathbf{b}_{2,2,0}}
\\%
\Red{\mathbf{r}_{2,2,0}}
\end{array}
\right]  ,\\
\left[
\begin{array}
[c]{c}%
\Red{\mathbf{r}_{4,3,1}}
\\%
\Green{\mathbf{g}_{4,3,1}}
\\%
\Blue{\mathbf{b}_{4,3,1}}
\end{array}
\right]  =  &  \frac{1}{2+4\cos^{2}\frac{\alpha}{2}}\left[
\begin{array}
[c]{c}%
\Red{\mathbf{r}_{2,2,0}}
\\%
\Green{\mathbf{g}_{2,2,0}}
\\%
\Blue{\mathbf{b}_{2,2,0}}
\end{array}
\right]  +\frac{\cos^{2}\frac{\alpha}{2}}{1+2\cos^{2}\frac{\alpha}{2}}\left[
\begin{array}
[c]{c}%
\Red{\mathbf{r}_{2,1,0}}
\\%
\Green{\mathbf{g}_{2,1,0}}
\\%
\Blue{\mathbf{b}_{2,1,0}}
\end{array}
\right]  +\frac{\sin^{2}\frac{\alpha}{2}}{2+4\cos^{2}\frac{\alpha}{2}}\left[
\begin{array}
[c]{c}%
\Orange{\mathbf{r}_{2,1,1}}
\\%
\Orange{\mathbf{r}_{2,1,1}}
\\%
\Orange{\mathbf{r}_{2,1,1}}
\end{array}
\right] \\
&  +\frac{\cos^{2}\frac{\alpha}{2}}{2+4\cos^{2}\frac{\alpha}{2}}\left[
\begin{array}
[c]{c}%
\Green{\mathbf{g}_{2,1,0}}
\\%
\Blue{\mathbf{b}_{2,1,0}}
\\%
\Red{\mathbf{r}_{2,1,0}}
\end{array}
\right]  +\frac{\cos^{2}\frac{\alpha}{2}}{1+2\cos^{2}\frac{\alpha}{2}}\left[
\begin{array}
[c]{c}%
\Blue{\mathbf{b}_{2,1,0}}
\\%
\Red{\mathbf{r}_{2,1,0}}
\\%
\Green{\mathbf{g}_{2,1,0}}
\end{array}
\right]  ,\\
\left[
\begin{array}
[c]{c}%
\Red{\mathbf{r}_{4,2,1}}
\\%
\Green{\mathbf{g}_{4,2,1}}
\\%
\Blue{\mathbf{b}_{4,2,1}}
\end{array}
\right]  =  &  \frac{1}{8+4\cos^{2}\frac{\alpha}{2}}\left[
\begin{array}
[c]{c}%
\Red{\mathbf{r}_{2,2,0}}
\\%
\Green{\mathbf{g}_{2,2,0}}
\\%
\Blue{\mathbf{b}_{2,2,0}}
\end{array}
\right]  +\frac{1+\cos^{2}\frac{\alpha}{2}}{4+2\cos^{2}\frac{\alpha}{2}%
}\left[
\begin{array}
[c]{c}%
\Red{\mathbf{r}_{2,1,0}}
\\%
\Green{\mathbf{g}_{2,1,0}}
\\%
\Blue{\mathbf{b}_{2,1,0}}
\end{array}
\right]  +\frac{\sin^{2}\frac{\alpha}{2}}{4+2\cos^{2}\frac{\alpha}{2}}\left[
\begin{array}
[c]{c}%
\Orange{\mathbf{r}_{2,1,1}}
\\%
\Orange{\mathbf{r}_{2,1,1}}
\\%
\Orange{\mathbf{r}_{2,1,1}}
\end{array}
\right] \\
&  +\frac{1}{8+4\cos^{2}\frac{\alpha}{2}}\left[
\begin{array}
[c]{c}%
\Green{\mathbf{g}_{2,2,0}}
\\%
\Blue{\mathbf{b}_{2,2,0}}
\\%
\Red{\mathbf{r}_{2,2,0}}
\end{array}
\right]  +\frac{1+2\cos^{2}\frac{\alpha}{2}}{8+4\cos^{2}\frac{\alpha}{2}%
}\left[
\begin{array}
[c]{c}%
\Green{\mathbf{g}_{2,1,0}}
\\%
\Blue{\mathbf{b}_{2,1,0}}
\\%
\Red{\mathbf{r}_{2,1,0}}
\end{array}
\right]  +\frac{1+2\cos^{2}\frac{\alpha}{2}}{8+4\cos^{2}\frac{\alpha}{2}%
}\left[
\begin{array}
[c]{c}%
\Blue{\mathbf{b}_{2,1,0}}
\\%
\Red{\mathbf{r}_{2,1,0}}
\\%
\Green{\mathbf{g}_{2,1,0}}
\end{array}
\right]  ,\\%
\Orange{\mathbf{r}_{4,2,2}}
=  &  \frac{1}{5+10\cos^{2}\frac{\alpha}{2}}%
\Red{\mathbf{r}_{2,2,0}}
+\frac{4\cos^{2}\frac{\alpha}{2}}{5+10\cos^{2}\frac{\alpha}{2}}%
\Red{\mathbf{r}_{2,1,0}}
+\frac{2\sin^{2}\frac{\alpha}{2}}{5+10\cos^{2}\frac{\alpha}{2}}%
\Orange{\mathbf{r}_{2,1,1}}
\\
&  +\frac{1}{5+10\cos^{2}\frac{\alpha}{2}}%
\Green{\mathbf{g}_{2,2,0}}
+\frac{4\cos^{2}\frac{\alpha}{2}}{5+10\cos^{2}\frac{\alpha}{2}}%
\Green{\mathbf{g}_{2,1,0}}
\\
&  +\frac{1}{5+10\cos^{2}\frac{\alpha}{2}}%
\Blue{\mathbf{b}_{2,2,0}}
+\frac{4\cos^{2}\frac{\alpha}{2}}{5+10\cos^{2}\frac{\alpha}{2}}%
\Blue{\mathbf{b}_{2,1,0}}
.
\end{align*}
Observe that control points%
\[%
\Orange{\mathbf{r}_{4,2,2}}
,~\left\{
\Red{\mathbf{r}_{4,4-i,j}}
\right\}  _{j=0,i=j}^{1,3-j},~\left\{
\Green{\mathbf{g}_{4,4-i,j}}
\right\}  _{j=0,i=j}^{1,3-j},~\left\{
\Blue{\mathbf{b}_{4,4-i,j}}
\right\}  _{j=0,i=j}^{1,3-j}%
\]
are in fact convex combinations of different subsets of control points%
\[%
\Orange{\mathbf{r}_{2,1,1}}
,~%
\Red{\mathbf{r}_{2,2,0}}
,~%
\Red{\mathbf{r}_{2,1,0}}
,~%
\Green{\mathbf{g}_{2,2,0}}
,~%
\Green{\mathbf{g}_{2,1,0}}
,~%
\Blue{\mathbf{b}_{2,2,0}}
,~%
\Blue{\mathbf{b}_{2,1,0}}
.
\]
Thus, the degree elevated (second order) control net is closer to the given patch
than its original (first order) one for all values of $\alpha$. Fig.\ \ref{fig:degree_elevation_from_1_to_2} illustrates this phenomenon for
different values of the shape parameter $\alpha$.
\end{example}

\begin{figure}
[!htb]
\begin{center}
\includegraphics[
height=5.6455in,
width=6.4714in
]%
{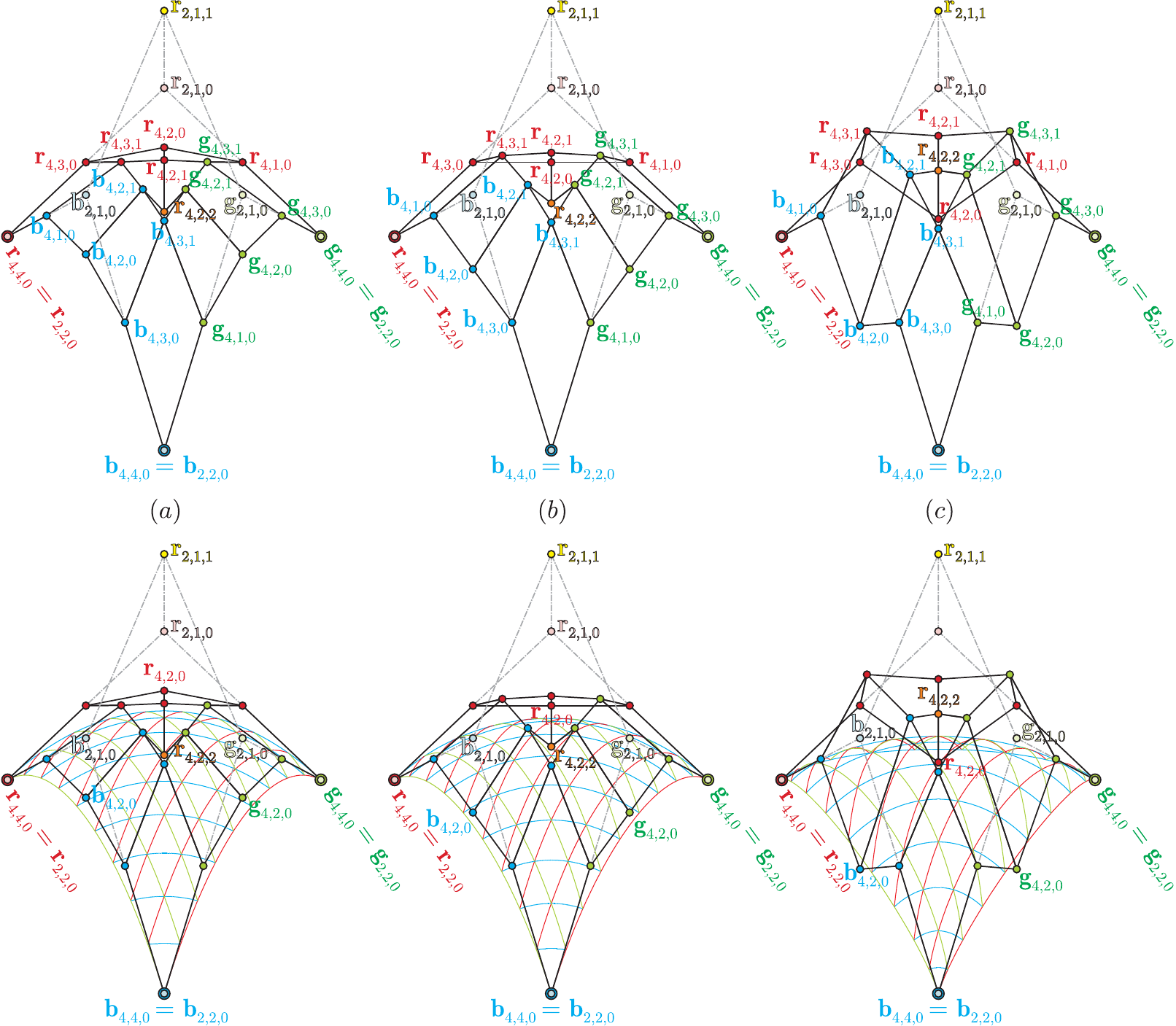}%
\caption{Order elevation of a first order triangular trigonometric patch for different values of the
shape parameter $\alpha$; in cases (\emph{a}), (\emph{b}) and (\emph{c}) the parameter $\alpha$ fulfills the conditions
$0<\alpha<\frac{\pi}{2}$, $\alpha=\frac{\pi}{2}$ and $\frac{\pi}{2}<\alpha
<\pi$, respectively.}%
\label{fig:degree_elevation_from_1_to_2}%
\end{center}
\end{figure}

\begin{question}
\label{que:open_problem_basis_transformation}
What is the general (inverse) transformation between the constrained trivariate bases
$\overline{T}_{2n}^{\alpha}$ and $V_{n}^{\alpha}$?
\end{question}

Answering Question \ref{que:open_problem_basis_transformation} would be important in the control point based exact description of triangular patches of (rational) trigonometric surfaces the coordinate functions of which are given
in (the rational counterpart of) $\mathcal{V}_{n}^{\alpha}$.

Question \ref{que:limiting_case} formulated below is related to the shape of the triangular trigonometric patch (\ref{triangular_trig_surface}) in the limiting case $\alpha \to 0$. The question is motivated by the following observations. Using the parametrization
\[
\left\{
\begin{array}{rcll}
u\left(x,y\right) & = & \alpha x, & x \in \left[0, 1\right],\\
v\left(x,y\right) & = & \alpha y, & y \in \left[0, x\right],\\
w\left(x,y\right) & = & \alpha z, & z = 1-x-y
\end{array}
\right.
\]
of the domain $\Omega^{\alpha}$ and the well-known identity $\lim_{t\to 0}\frac{\sin\left(t\right)}{t}=1$, it is easy to observe that the first, second and third order normalized constrained trivariate trigonometric function systems of type (\ref{normalized_united_system}) degenerate to constrained trivariate Bernstein polynomials defined on the unit simplex $x+y+z=1$ as shown in Fig.\ \ref{fig:limit_cases_n_1_2_3}.

\begin{figure}[!htb]
\center
\includegraphics[scale = 1]{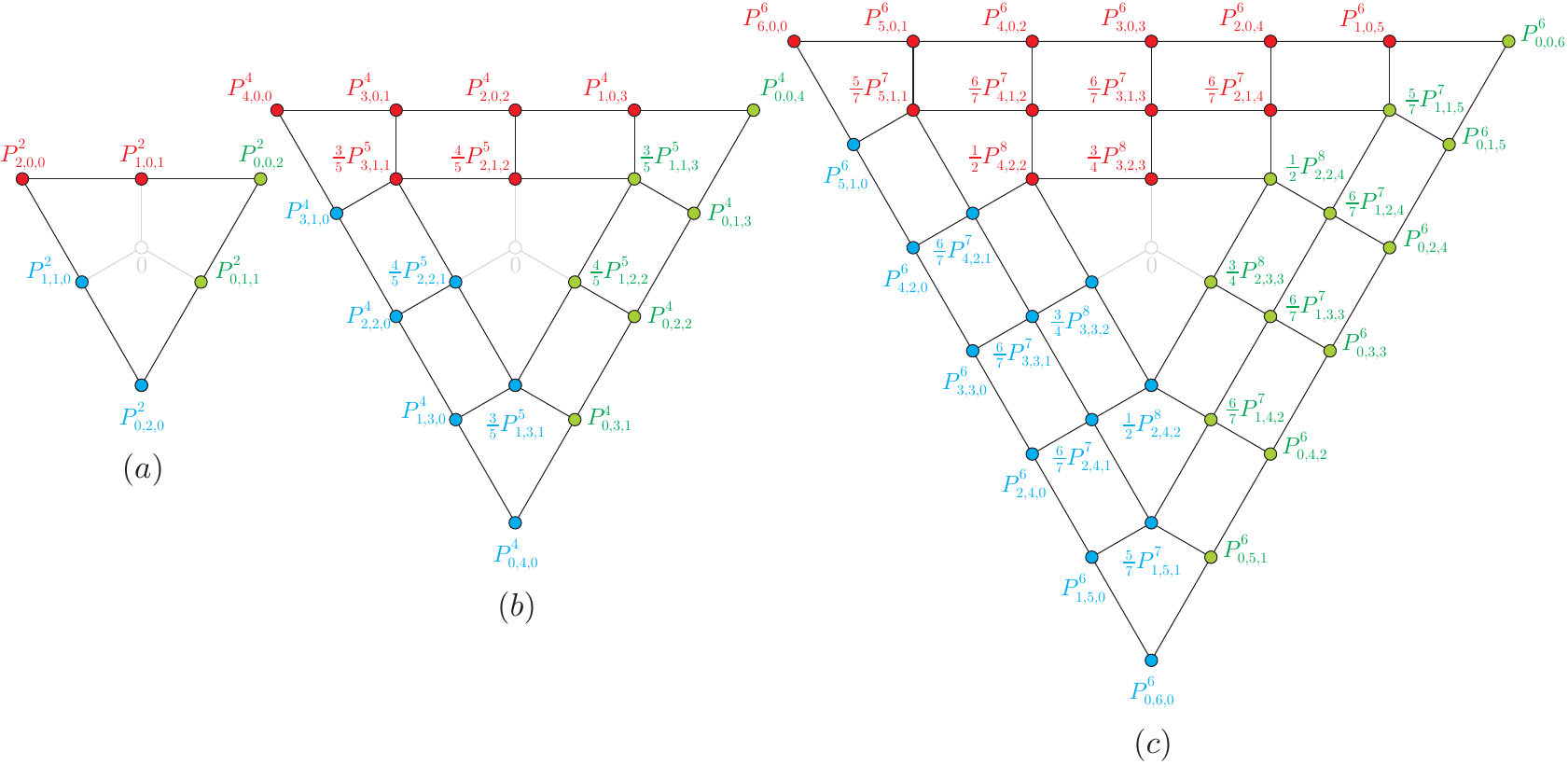}
\caption{Cases (\emph{a}), (\emph{b}) and (\emph{c}) illustrate the limiting case $\alpha\to 0$ of the first, second and third order constrained trivariate normalized trigonometric bases of type (\ref{normalized_united_system}), respectively. ($P^{m}_{d,e,f}$ denotes the constrained trivariate Bernstein polynomial $\frac{m!}{d!e!f!}x^d y^e \left(1-x-y\right)^f$ of degree $m$, where $x\in\left[0,1\right]$, $y\in\left[0,x\right]$ and $d,e,f\in\left\{0,1,\ldots,m\right\}$ such that $d+e+f=m$. Constrained triavariate normalized trigonometric basis functions associated with the innermost node of the graphs above vanish when $\alpha\to 0$.)}
\label{fig:limit_cases_n_1_2_3}
\end{figure}

Moreover, the control nets of the original first, second and third order triangular trigonometric patches can be converted to control nets that describe classical polynomial quadratic, quintic and octic triangular B\'ezier patches, respectively, when $\alpha \to 0$. Due to symmetry, we only list for $n=2$ and $3$ the position of B\'ezier points
\begin{align*}
\mathbf{p}_{5,0,0}  & =\Red{\mathbf{r}_{4,4,0}},\\
\mathbf{p}_{4,0,1}  & =\frac{1}{5}\Red{\mathbf{r}_{4,4,0}}+\frac{4}{5}\Red{\mathbf{r}_{4,3,0}},\,
\mathbf{p}_{3,0,2}    =\frac{2}{5}\Red{\mathbf{r}_{4,3,0}}+\frac{3}{5}\Red{\mathbf{r}_{4,2,0}},\,
\mathbf{p}_{2,0,3}    =\frac{3}{5}\Red{\mathbf{r}_{4,2,0}}+\frac{2}{5}\Red{\mathbf{r}_{4,1,0}},\,
\mathbf{p}_{1,0,4}    =\frac{4}{5}\Red{\mathbf{r}_{4,1,0}}+\frac{1}{5}\Green{\mathbf{g}_{4,4,0}},\\
& \\
\mathbf{p}_{3,1,1}  & =\frac{1}{5}\Red{\mathbf{r}_{4,3,0}}+\frac{3}{5}\Red{\mathbf{r}_{4,3,1}}+\frac{1}{5}\Blue{\mathbf{b}_{4,1,0}},\,
\mathbf{p}_{2,1,2}    =\frac{1}{5}\Red{\mathbf{r}_{4,2,0}}+\frac{4}{5}\Red{\mathbf{r}_{4,2,1}}
\end{align*}
and
\begin{align*}
\mathbf{p}_{8,8,0}  & =\Red{\mathbf{r}_{6,6,0}},\\
\mathbf{p}_{7,0,1}  & =\frac{1}{4}\Red{\mathbf{r}_{6,6,0}}+\frac{3}{4}\Red{\mathbf{r}_{6,5,0}},\,
\mathbf{p}_{6,0,2}    =\frac{1}{28}\Red{\mathbf{r}_{6,6,0}}+\frac{3}{7}\Red{\mathbf{r}_{6,5,0}}+\frac{15}{28}\Red{\mathbf{r}_{6,4,0}},\,
\mathbf{p}_{5,0,3}    =\frac{3}{28}\Red{\mathbf{r}_{6,5,0}}+\frac{15}{28}\Red{\mathbf{r}_{6,4,0}}+\frac{5}{14}\Red{\mathbf{r}_{6,3,0}},\\
\mathbf{p}_{4,0,4}  & =\frac{3}{14}\Red{\mathbf{r}_{6,4,0}}+\frac{4}{7}\Red{\mathbf{r}_{6,3,0}}+\frac{3}{14}\Red{\mathbf{r}_{6,2,0}},\,
\mathbf{p}_{3,0,5}    =\frac{5}{14}\Red{\mathbf{r}_{6,3,0}}+\frac{15}{28}\Red{\mathbf{r}_{6,2,0}}+\frac{3}{28}\Red{\mathbf{r}_{6,1,0}},\,
\mathbf{p}_{2,0,6}    =\frac{15}{28}\Red{\mathbf{r}_{6,2,0}}+\frac{3}{7}\Red{\mathbf{r}_{6,1,0}}+\frac{1}{28}\Green{\mathbf{g}_{6,6,0}},\\
\mathbf{p}_{1,0,7}  & =\frac{3}{4}\Red{\mathbf{r}_{6,1,0}}+\frac{1}{4}\Green{\mathbf{g}_{6,6,0}},\\
& \\
\mathbf{p}_{6,1,1}  & =\frac{1}{28}\Red{\mathbf{r}_{6,6,0}}+\frac{3}{14}\Red{\mathbf{r}_{6,5,0}}+\frac{15}{28}\Red{\mathbf{r}_{6,5,1}}+\frac{3}{14}\Blue{\mathbf{b}_{6,1,0}},\\
\mathbf{p}_{5,1,2}  & =\frac{1}{14}\Red{\mathbf{r}_{6,5,0}}+\frac{5}{28}%
\Red{\mathbf{r}_{6,4,0}}+\frac{5}{28}\Red{\mathbf{r}_{6,5,1}}+\frac{15}{28}\Red{\mathbf{r}%
_{6,4,1}}+\frac{1}{28}\Blue{\mathbf{b}_{6,1,0}},\\
\mathbf{p}_{4,1,3}  & =\frac{3}{28}\Red{\mathbf{r}_{6,4,0}}+\frac{1}{7}%
\Red{\mathbf{r}_{6,3,0}}+\frac{9}{28}\Red{\mathbf{r}_{6,4,1}}+\frac{3}{7}\Red{\mathbf{r}%
_{6,3,1}},\\
\mathbf{p}_{3,1,4}  & =\frac{1}{7}\Red{\mathbf{r}_{6,3,0}}+\frac{3}{28}%
\Red{\mathbf{r}_{6,2,0}}+\frac{3}{7}\Red{\mathbf{r}_{6,3,1}}+\frac{9}{28}\Red{\mathbf{r}%
_{6,2,1}},\\
\mathbf{p}_{2,1,5}  & =\frac{5}{28}\Red{\mathbf{r}_{6,2,0}}+\frac{1}{14}%
\Red{\mathbf{r}_{6,1,0}}+\frac{15}{28}\Red{\mathbf{r}_{6,1,1}}+\frac{1}{28}\Green{\mathbf{g}%
_{6,5,0}}+\frac{5}{28}\Green{\mathbf{g}_{6,5,1}},\\
\mathbf{p}_{1,1,6}  & =\frac{3}{14}\Red{\mathbf{r}_{6,1,0}}+\frac{1}{28}%
\Green{\mathbf{g}_{6,6,0}}+\frac{3}{14}\Green{\mathbf{g}_{6,5,0}}+\frac{15}{28}\Green{\mathbf{g}%
_{6,5,1}},\\
& \\
\mathbf{p}_{4,2,2}  & =\frac{1}{28}\Red{\mathbf{r}_{6,4,0}}+\frac{3}{14}%
\Red{\mathbf{r}_{6,4,1}}+\frac{1}{28}\Blue{\mathbf{b}_{6,2,0}}+\frac{3}{14}\Blue{\mathbf{b}%
_{6,2,1}}+\frac{1}{2}\Red{\mathbf{r}_{6,4,2}},\\
\mathbf{p}_{3,2,3}  & =\frac{1}{28}\Red{\mathbf{r}_{6,2,0}}+\frac{3}{14}%
\Red{\mathbf{r}_{6,3,1}}+\frac{3}{4}\Red{\mathbf{r}_{6,3,2}},%
\end{align*}
respectively. Fig.\ \ref{fig:conversion_of_limit_cases_n_1_2_3} shows all B\'ezier points obtained by the evaluation of convex combinations describing these conversion processes.  

\begin{figure}[!htb]
\center
\includegraphics[scale = 1]{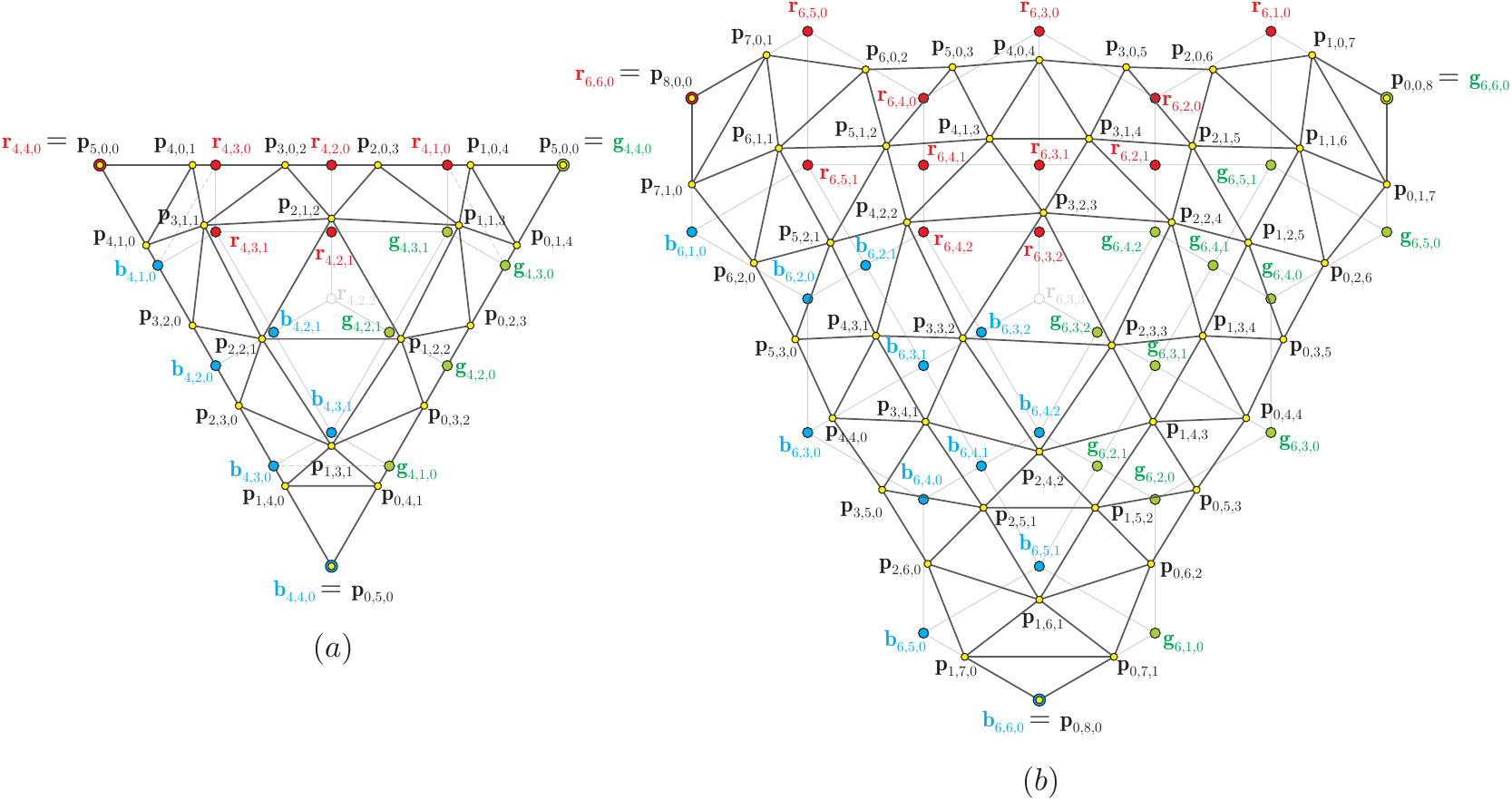}
\caption{Conversion of control nets of original second and third order triangular trigonometric patches to that of (\emph{a}) quintic and (\emph{b}) octic triangular B\'ezier patches, respectively, when $\alpha \to 0$.}
\label{fig:conversion_of_limit_cases_n_1_2_3}
\end{figure}

\begin{question}
\label{que:limiting_case}
Is the limiting case $\alpha \to 0$ of the (rational) triangular trigonometric patch of order $n$ a (rational) triangular B\'ezier patch of degree $3n-1$ defined on the unit simplex? If so, then how can we convert the control net of the original (rational) triangular trigonometric patch to that of the (rational) triangular B\'ezier patch obtained in this limiting case?
\end{question}

\section{Final remarks and future work}

The constrained trivariate counterpart of the univariate normalized
B-basis\ (\ref{univariate_basis}) of the vector space
(\ref{truncated_Fourier_vector_space}) of first and second order
trigonometric polynomials were introduced in recent articles \cite{Wang2010a}
and \cite{Wang2010b}, respectively. By means of a multiplicatively weighted oriented graph and an equivalence
relation we were able to provide a natural description of the normalized
linearly independent constrained trivariate function system
(\ref{normalized_united_system}) of dimension $\delta_{n}=3n\left(n+1\right)+1$ that spans the same vector space of functions as the
constrained trivariate extension of the canonical basis of truncated Fourier
series of order $n\in\mathbb{N}$. The proposed extension was applied to define (rational) triangular trigonometric patches of order $n$.

In Section \ref{sec:open_problems} we have outlined some theoretical problems that will form our forthcoming research directions. We also intend to illustrate the applicability of
the proposed (rational) triangular trigonometric patches by providing $\alpha$-dependent control point based formulas for order elevation and the exact description of triangular patches that lie on trigonometric
(rational) surfaces.

\section*{Acknowledgements}
\'{A}. R\'{o}th partially realized his research in the frames of the highly important National Excellence Program -- working out and operating an inland student and researcher support, identification number T\'AMOP 4.2.4.A/2-11-1-2012-0001. The project is realized with the help of European Union and Hungary subsidy and co-financing by the European Social Fund. \'{A}. R\'{o}th and A. Krist\'{a}ly have also been supported by the Romanian national grant CNCS-UEFISCDI/PN-II-RU-TE-2011-3-0047. I. Juh\'{a}sz carried out his research in the framework of the Center of Excellence of Mechatronics and Logistics at the University of Miskolc. The authors would like to thank the kind help of W.-Q. Shen who translated for them the
article \cite{Wang2010b} from Chinese to English.


\begin{thebibliography}{99}                                                                                               %

\bibitem {Barnhill1973}Barnhill, R.E., Birkhoff, G., Gordon, W., 1973. Smooth
interpolation in triangles, Journal of Approximation Theory, \textbf{8}(2), 114--128.

\bibitem {Barnhill1985}Barnhill, R.E., 1985. Surfaces in computer aided
geometric design: a survey with new results. Computer Aided Geometric Design,
\textbf{2}(1--3), 1--17.





\bibitem {deCasteljau1963}de Casteljau, E., 1963. Courbes et surfaces \`{a}
poles, Technical report, Andr\'{e} Citro\"{e}n Automobiles S.A., Paris.

\bibitem {Dahmen1992}Dahmen, W., Micchelli, C.A., Seidel, H.-P., 1992.
Blossoming begets B-splines built better by B-patches, Mathematics of
Computation, \textbf{59}(199), 97--115.

\bibitem {Farin1986}Farin, G., 1986. Triangular Bernstein--B\'{e}zier patches.
Computer Aided Geometric Design, \textbf{3}(2), 83--127.






\bibitem {Pena1997}Pe\~{n}a, J.M., 1997. Shape preserving representations for
trigonometric polynomial curves. Computer Aided Geometric Design,
\textbf{14}(1), 5--11.


\bibitem {tech_report}R\'oth, \'A., Juh\'asz, I., Krist\'aly, A., 2009--2013. A constructive approach to triangular trigonometric patches. Technical Report, Babe\c s--Bolyai University,\\ \href{https://sites.google.com/site/agostonroth/Home/On_trigonometric_patches__Technical_Report__2009_2013.pdf}{https://sites.google.com/site/agostonroth/Home/On\_trigonometric\_patches\_\_Technical\_Report\_\_2009\_2013.pdf}.


\bibitem {SReyes1998}S\'{a}nchez-Reyes, J., 1998. Harmonic rational B\'{e}zier
curves, p-B\'{e}zier curves and trigonometric polynomials. Computer Aided
Geometric Design, \textbf{15}(9), 909--923.

\bibitem {Wang2010a}Shen, W.-Q., Wang, G.Z., 2010. Triangular domain extension
of linear Bernstein-like trigonometric polynomial basis. Journal of Zhejiang
University Science C (Computers \& Electronics), \textbf{11}(5), 356--364.

\bibitem {Wang2010b}Shen, W.-Q., Wang, G.Z., 2010. The triangular domain
extension of B\'{e}zier-like basis for $5$-order trigonometric polynomial
space. Journal of Computer-Aided Design and Computer Graphics, \textbf{22}(5), 833--837.

\bibitem {Wei2011}Wei, Y.-W., Shen, W.-Q., Wang, G.Z., 2011. Triangular domain
extension of algebraic trigonometric B\'{e}zier-like basis. Applied
Mathematics a Journal of Chinese Universities, \textbf{26}(2), 151--160.
\end{thebibliography}
\end{document}